\documentclass[11pt]{amsart}
\usepackage{tikz}
\usetikzlibrary{arrows.meta, decorations.pathmorphing}
\usepackage{amssymb}
\usepackage{amsmath, amsfonts, vmargin}
\usepackage{color}
\usepackage{amsthm}
\usepackage[colorlinks=true,citecolor=blue,linkcolor=blue]{hyperref}
\IfFileExists{bbm.sty}{\usepackage{bbm}}{}
\usepackage{verbatim}
\usepackage{mathrsfs}
\usepackage{enumerate}
\usepackage{cite}
\usepackage{array}
\usepackage{multirow}
\usepackage{booktabs}
\usepackage{mathtools}
\mathtoolsset{showonlyrefs=true}
\newtheorem{theorem}{Theorem}
\newtheorem{proposition}[theorem]{Proposition}

\newtheorem{corollary}[theorem]{Corollary}
\newtheorem{lemma}[theorem]{Lemma}

\newtheorem{remark}[theorem]{Remark}
\newtheorem{definition}[theorem]{Definition}
\theoremstyle{definition}

\numberwithin{theorem}{section}
\numberwithin{equation}{section}

\def\exp{\operatorname{\rm exp}}

\def\Tr{{\rm Tr}}

\newcommand{\Norm}[1]{ \left\|  #1 \right\| }

\newcommand{\inner}[1]{ \langle  #1 \rangle}

\newcommand{\dint}{\mathchoice{{\vbox{\hbox{$\textstyle-$}}\kern-7.5pt}}
    {{\vbox{\hbox{$\scriptstyle-$}}\kern-5.25pt}}
    {{\vbox{\hbox{$\scriptscriptstyle-$}}\kern-3.75pt}}
    {{\vbox{\hbox{$\scriptscriptstyle-$}}\kern-3.0pt}}\!\int}
\newcolumntype{C}[1]{>{\centering\arraybackslash}p{#1}}

\def\RR{{\mathbb R}}
\def\TT{{\mathbb T}}

\def\S{\mathcal{S}}

\def\M{{\mathcal M}}

\begin{document}

\title{Cwikel estimates and Weyl's laws for $L\log L$ potentials \\ on quantum tori}

\author{Siyu Liu}
\address{Institute for Advanced Study in Mathematics, Harbin Institute of Technology, 150001 Harbin, China}
\email{syliu@stu.hit.edu.cn}

\author{Xinyu Zhang}
\address{Institute for Advanced Study in Mathematics, Harbin Institute of Technology, 150001 Harbin, China}
\email{xy.zhang@stu.hit.edu.cn}

\subjclass[2020]{Primary 47B10, Secondary 46L52, 58B34, 35P20}
\keywords{Quantum tori, endpoint Cwikel estimates, weak Schatten ideals,
Weyl's laws, critical Sobolev embeddings}

\begin{abstract}
We prove an \(L\log L\)-endpoint Cwikel estimate on quantum tori $\mathbb T_\theta^d$, i.e., for every $d\geq 2$ and $x\in L\log L(\mathbb T_\theta^d)$,
\begin{equation*}
    \big\|(1-\Delta)^{-\frac{d}{4}}\lambda(x)(1-\Delta)^{-\frac{d}{4}}\big\|_{\mathcal{L}_{1,\infty}} \leq C_d \Norm{x}_{L\log L(\TT_\theta^d)}, 
\end{equation*} 
where $\lambda(x)$ is the left multiplication by $x$. 
Our argument actually gives an abstract weak Hilbert--Schmidt estimate for multiplication operators in finite von Neumann algebras whose $L_2$-spaces admit orthonormal bases of unitaries. 
As a consequence, it yields a critical Sobolev embedding $W_2^{d/2}\hookrightarrow\exp(L_2)$ on quantum tori.
When $\theta=0$, our method also gives a simpler proof of the $L\log L$ Cwikel estimate on classical tori.

As an application, we use the $C^*$-algebraic principal symbol map
to prove Weyl's laws for sandwich operators 
\[
 S(1-\Delta)^{-d/4}\lambda(x)(1-\Delta)^{-d/4}T,
\]
where $x\in L\log L(\mathbb T_\theta^d)$ and
$S,T\in\Pi$, the $C^*$-algebra generated by
$\lambda(C(\mathbb T_\theta^d))$ and Fourier multipliers
with continuous homogeneous symbols of degree zero.
We also prove a semiclassical Weyl's law for critical Schr\"odinger operators with $L\log L$ potentials.
\end{abstract}

\maketitle

\markboth{S. Liu and X. Zhang}{Cwikel estimates and Weyl's laws for $L\log L$ potentials on quantum tori}

\tableofcontents

\section{Introduction}

\subsection{Cwikel estimates and the classical endpoint}

Estimates for products of multiplication operators and Fourier
multipliers originate in the study of bound states of Schr\"odinger
operators. Through the Birman--Schwinger principle, they relate the
integrability of a potential to the number of negative eigenvalues.
A basic question, formulated by Simon \cite{Simon76}, is to determine
when $M_fg(-\mathrm i\nabla)$ belongs to a weak Schatten class, where
$M_f$ denotes multiplication by $f$ and $g(-\mathrm i\nabla)$ is the
Fourier multiplier with symbol $g$. We write $\mathcal{L}_{p,\infty}$
for the weak Schatten ideal, with
$\|T\|_{p,\infty}=\sup_{j\geq0}(j+1)^{1/p}\mu(j,T)$, where
$\mu(j,T)$ are the singular values in nonincreasing order.
Cwikel's theorem \cite{Cwikel77} gives
\begin{equation}
\label{eq:intro-cwikel}
 \|M_fg(-\mathrm i\nabla)\|_{p,\infty}
 \leq C_p\|f\|_{L_p(\mathbb R^d)}
          \|g\|_{L_{p,\infty}(\mathbb R^d)},\qquad 2<p<\infty,
\end{equation}
where the space $L_{p,\infty}(\RR^d)$ is the usual weak $L_p$-space.
Subsequent developments include Weidl's estimates in terms of
$f\otimes g$ \cite{Weidl99} and the noncommutative framework of
Levitina, Sukochev and Zanin \cite{LSZ20}, with applications to
noncommutative Euclidean spaces and magnetic Laplacians.

For the Bessel potential symbol
$g(\xi)=(1+|\xi|^2)^{-d/(2p)}$, the exponent $p=2$ is critical:
estimate~\eqref{eq:intro-cwikel} fails at this endpoint under
the same integrability assumptions. On compact spaces, the relevant substitute
is the Orlicz space $L\log L$, equipped here with the Luxemburg norm
associated with $\Phi(t)=t\log(e+t)$, $t\geq0$.
Building on the approximation methods of Birman and Solomyak
\cite{BS80}, Solomyak \cite{Sol94,Sol95} obtained critical estimates
in even dimensions. Sukochev and Zanin \cite[Theorem~1.1]{SZ22}
extended these estimates to every dimension: for $d\geq1$, on the
classical torus $\mathbb T^d=\mathbb R^d/(2\pi\mathbb Z)^d$ equipped
with normalized Haar measure $\nu$ and nonpositive flat Laplacian
$\Delta$, they proved
\begin{equation}
\label{eq:intro-classical-endpoint}
 \big\|(1-\Delta)^{-\frac{d}{4}}M_f(1-\Delta)^{-\frac{d}{4}}\big\|_{1,\infty}
 \leq C_d\|f\|_{L\log L(\mathbb T^d)},
 \qquad f\in L\log L(\mathbb T^d).
\end{equation}
Here $C_d$ depends only on $d$. 
One should note that even the operator norm bound
\begin{equation}
\label{eq:intro-classical-operator-norm}
 \bigl\|(1-\Delta)^{-\frac{d}{4}}M_f(1-\Delta)^{-\frac{d}{4}}\bigr\|_\infty
 \leq C_d\|f\|_{L\log L(\mathbb T^d)}
\end{equation}
is nontrivial. 
The estimate \eqref{eq:intro-classical-operator-norm} follows from the critical Sobolev embedding
$W_2^{d/2}(\mathbb T^d)\hookrightarrow\exp(L_2)(\mathbb T^d)$
and H\"older's inequality for Orlicz spaces, see \cite[Section~1.1]{SZ22}.
The space $L\log L$ is the largest Orlicz space for which
the operator norm estimate \eqref{eq:intro-classical-operator-norm} holds;
see \cite[Corollary~24]{SZ23Optimal}.
Consequently, the $L\log L$ assumption in the weak trace class
estimate \eqref{eq:intro-classical-endpoint} cannot be weakened within the class of Orlicz spaces.

The classical proof of \eqref{eq:intro-classical-endpoint} combines the
critical Sobolev embedding above with Solomyak's finite-rank
approximation method \cite[Section~1.1]{SZ22}.
More precisely, for $f\geq0$, \eqref{eq:intro-classical-endpoint} is equivalent to the following $\mathcal{L}_{2,\infty}$ estimate 
\begin{equation}
      \label{eq:intro-classical-endpoint-2-infty}
      \big\|M_{f^{1/2}}(1-\Delta)^{-\frac{d}{4}}\big\|_{2,\infty}^2\leq C_d\|f\|_{L\log L(\mathbb{T}^d)}.
\end{equation}
Since $(1-\Delta)^{-d/4}$ maps $L_2(\TT^d)$ onto $W_2^{d/2}(\TT^d)$,
the inequality \eqref{eq:intro-classical-endpoint-2-infty} controls the approximation numbers
of multiplication by $f^{1/2}$ from $W_2^{d/2}$ into $L_2$.
In\cite{SZ22}, the authors construct (for $n\geq1$) $K_n$ of rank at
most $C_dn$ such that
\begin{equation}
\label{eq:Solomyak approximation}
 \int_{\mathbb T^d}f|u-K_nu|^2\,d\nu
 \leq\frac{C_d}{n}\|f\|_{L\log L}\|u\|_{W_2^{d/2}}^2,
 \qquad u\in W_2^{\frac{d}{2}}(\mathbb{T}^d),\quad f\geq0.
\end{equation}
Together with \eqref{eq:intro-classical-operator-norm},
estimate \eqref{eq:Solomyak approximation} yields
\eqref{eq:intro-classical-endpoint-2-infty}.
The construction of such $K_n$'s uses local
polynomial approximation and a covering by toric cubes selected
through Besicovitch's covering lemma (see \cite[Lemma 3.4]{SZ22}). 
These spatial tools have no direct counterpart for operator-valued potentials on a quantum torus.
This motivates an approach to the noncommutative endpoint estimate
that avoids spatial localization.

\subsection{Noncommutative integration and Connes' formula}

The Cwikel estimates also play a role in Connes' noncommutative integration
\cite{AC88,AC94,LSZ21}. Let $H$ be a separable infinite-dimensional
Hilbert space and $\mathcal B(H)$ its algebra of bounded operators.
Given a positive operator $T\in\mathcal{L}_{1,\infty}(H)$ and a positive
normalized trace $\varphi$ on this ideal, a noncommutative integral
has the form
\[
 A\longmapsto\varphi(AT),\qquad A\in\mathcal B(H),
\]
where $T$ plays the role of a volume element. Such traces
vanish on finite-rank operators; the normalization assigns value one
to an operator with eigenvalues $(j+1)^{-1}$, $j\geq0$.
Connes' trace theorem \cite[Theorem~1]{AC88} identifies the Dixmier trace of a classical pseudodifferential operator of order
$-d$ with $1/d$ times its noncommutative residue. Let $(X,g)$ be a closed $d$-dimensional Riemannian manifold,
with Riemannian volume $\operatorname{vol}_g$ and nonnegative
Laplace--Beltrami operator $-\Delta_g$. Write $|\mathbb S^{d-1}|$
for the surface area of the unit sphere in $\mathbb R^d$.
Connes' integration formula then reads
\begin{equation}
\label{eq:intro-classical-integration}
 \varphi\bigl((1-\Delta_g)^{-\frac{d}{4}}M_f(1-\Delta_g)^{-\frac{d}{4}}\bigr)
 =\frac{|\mathbb S^{d-1}|}{d(2\pi)^d}
       \int_X f\,d\operatorname{vol}_g,
 \qquad f\in C^\infty(X).
\end{equation}

Connes asked whether ordinary eigenvalue asymptotics could give
this integration formula without extended limits; see
\cite[Question~1.1]{SZ23}.
For a selfadjoint operator or real-valued function $b$, write
$b_\pm=(|b|\pm b)/2$ for its positive and negative parts.
For compact selfadjoint $K$, let $\lambda_j^+(K)$ list its
positive eigenvalues and let $\lambda_j^-(K)$ list the absolute
values of its negative eigenvalues, both in nonincreasing order.
Thus $\lambda_j^\pm(K)=\mu(j,K_\pm)$.
Zanin and Sukochev \cite[Theorem~1.2]{SZ23} answered this question by proving, for
real-valued $f\in L\log L(X)$, that
\[
 \lim_{j\to\infty}(j+1)
 \lambda_j^\pm\bigl((1-\Delta_g)^{-\frac{d}{4}}M_f(1-\Delta_g)^{-\frac{d}{4}}\bigr)
 =\frac{|\mathbb S^{d-1}|}{d(2\pi)^d}
       \int_X f_\pm\,d\operatorname{vol}_g.
\]
These limits give a constructive integration formula
and imply \eqref{eq:intro-classical-integration} for $L\log L$
potentials. The symmetric sandwich allows potentials in
$L\log L(X)\setminus L_2(X)$.
For such $f$, the one-sided product $M_f(1-\Delta_g)^{-d/2}$
cannot be represented by a bounded operator on $L_2(X)$. Rozenblum \cite[Theorem~6.6]{R22} proved eigenvalue
asymptotics for scalar Birman--Schwinger operators with
$L\log L$ densities on Ahlfors-regular rectifiable sets.
Ponge \cite[Theorems~4.1 and~6.1]{P22} proved Weyl's laws
and integration formulas for matrix-valued $L\log L$ potentials
between pseudodifferential operators of order $-d/2$
on closed manifolds.

On quantum tori (also called ``noncommutative tori"), 
McDonald, Sukochev and Zanin \cite[Theorem~6.5]{MSZ19}
proved a trace formula for $A(1-\Delta)^{-d/2}$, where
$A\in\Pi$. Here $\Pi$ is a $C^*$-algebra containing all
classical pseudodifferential operators of order zero
on $\mathbb T_\theta^d$.
The definition of $\Pi$ and the principal symbol map are given in
Section~\ref{subsec:principal-symbol-algebra}. Their
work on spectral triples \cite{MSZ22} also yields spectral asymptotics
and semiclassical Weyl's laws. Sukochev, Xiong and Zanin \cite[Theorems~1.1--1.2]{SXZ23}
proved singular-value asymptotics for $T(-\Delta)^{-1/2}$,
where $T\in\Pi$.
They applied these formulas to quantised derivatives. Zhang \cite[Corollary~5.2 and Theorem~5.8]{Z26}
proved singular-value asymptotics for $A(1-\Delta)^{-m/2}$
for every $m>0$ and $A\in\Pi$, together with positive
and negative eigenvalue asymptotics for
$(1-\Delta)^{-m/4}A(1-\Delta)^{-m/4}$ when $A=A^*$. These developments motivate the
$L\log L$ endpoint problem on quantum tori: an endpoint Cwikel
estimate provides the continuity needed to pass from bounded
potentials to unbounded Orlicz potentials in spectral asymptotics.

\subsection{Main results}

We prove an $L\log L$ endpoint Cwikel estimate on quantum tori
with a constant only depending on dimension.
We apply the estimate to obtain a critical exponential
Sobolev embedding and to prove Weyl's laws and
Connes' integration formulas for $L\log L$ potentials.

We begin with an abstract weak Hilbert--Schmidt estimate.
Let $(\mathcal M,\tau)$ be a finite von Neumann algebra with a normal
faithful tracial state, and put $H=L_2(\mathcal M,\tau)$.
We write $\lambda$ for left multiplication on $H$ and take all
noncommutative $L_p$ and Orlicz norms with respect to $\tau$. 
\par The main assumption on $\M$ is that $H=L_2(\M)$ admits an at most countable orthonormal basis
$\{u_j\}$ consisting of unitaries in $\mathcal M$.
Let $G$ be the diagonal operator given by $Gu_j=g_ju_j$, where
$g=\{g_j\}\in\ell_{2,\infty}$.
Here $\ell_{2,\infty}$ is the usual weak $\ell_2$ space. 

\begin{theorem}[cf.\ Theorem~\ref{thm:abstract-endpoint}]
\label{thm:intro-abstract-endpoint}
Suppose the von Neumann algebra $\M$ and the diagonal operator $G$ satisfy the assumptions above.
Let $f\geq0$ and 
$f^2\in L\log L(\mathcal M)$. 
Then $\lambda(f)G$
has a unique bounded extension in $\mathcal{L}_{2,\infty}(L_2(\M))$, and
\begin{equation*}
 \|\lambda(f)G\|_{2,\infty}
 \lesssim \|g\|_{\ell_{2,\infty}}\|f\|_{L\log L^{(2)}(\mathcal M)},
\end{equation*}
where $L\log L^{(2)}(\M)$ is the $2$-convexification of $L\log L(\M)$,
with $\|f\|_{L\log L^{(2)}}=\|f^2\|_{L\log L}^{1/2}$. 
\end{theorem}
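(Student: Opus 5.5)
The plan is to bound $\mu(N,\lambda(f)G)$ directly by $N^{-1/2}$ through a dyadic decomposition in the index $j$, combined with a level-dependent spectral truncation of $f$. By homogeneity I may assume $\|g\|_{\ell_{2,\infty}}=1$ and $\|h\|_{L\log L}=1$ with $h=f^2$, so that $\tau(h\log(e+h))\le 1$. After reordering the basis, which is harmless since $G$ is diagonal, I may also assume $|g_j|$ is nonincreasing, hence $|g_j|\le (j+1)^{-1/2}$.

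Let $P_k$ be the orthogonal projection onto $\overline{\mathrm{span}}\{u_j:2^k\le j+1<2^{k+1}\}$ and put $G_k=GP_k$. Then $\|G_k\|\le 2^{-k/2}$ and $\operatorname{rank}P_k=2^k$. The key elementary facts are the following.
\begin{enumerate}[(i)]
\item For a bounded $a\in\mathcal M$ one has $\|\lambda(a)G_k\|\le\|a\|_\infty 2^{-k/2}$.
\item Since each $u_j$ is unitary, $\|\lambda(b)u_j\|_2=\|bu_j\|_2=\|b\|_2$. Hence the Hilbert--Schmidt norm satisfies
\[
\|\lambda(b)G_k\|_{2}^2=\sum_{j\in\text{block }k}|g_j|^2\|b\|_2^2\le\|b\|_2^2 .
\]
\end{enumerate}
Fact (ii) is exactly where the unitary-basis hypothesis enters. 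It replaces the spatial localisation of Solomyak's method: multiplication by $b$ moves no mass between basis vectors in $L_2$-norm.

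Fix $N\sim 2^K$. For $k>K$ choose levels $\sigma_k=2^{(k-K)/2}$ and split, via spectral projections of $f$,
\[
f=f\,e_{[0,\sigma_k^{1/2}]}(f)+f\,e_{(\sigma_k^{1/2},\infty)}(f)=:a_k+b_k,
\qquad \|a_k\|_\infty^2\le\sigma_k,\quad \|b_k\|_2^2=\tau\bigl(h\,e_{(\sigma_k,\infty)}(h)\bigr).
\]
Write $\lambda(f)G=A_0+A_1+A_2$ with
\[
A_0=\sum_{k\le K}\lambda(f)G_k,\qquad
A_1=\sum_{k>K}\lambda(a_k)G_k,\qquad
A_2=\sum_{k>K}\lambda(b_k)G_k .
\]
Then $A_0$ has rank at most $2^{K+1}$. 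For $A_1$, the summands act on mutually orthogonal subspaces $P_kH$, so
\[
\|A_1\|^2\le\sum_{k>K}\sigma_k2^{-k}=2^{-K}\sum_{m\ge1}2^{-m/2}\lesssim 2^{-K}.
\]
For $A_2$, the Hilbert--Schmidt norms add over orthogonal column blocks, giving
\[
\|A_2\|_2^2\le\sum_{m\ge1}\tau\bigl(h\,e_{(2^{m/2},\infty)}(h)\bigr)\le\tau\Bigl(h\sum_{m:\,2^{m/2}<h}1\Bigr)\lesssim\tau\bigl(h\log(e+h)\bigr)\le1 .
\]
Hence $\mu(2^K,A_2)\le\|A_2\|_2\,2^{-K/2}\lesssim 2^{-K/2}$. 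By the Ky Fan inequality $\mu(n_1+n_2+n_3,\cdot)\le\sum\mu(n_i,\cdot)$,
\[
\mu\bigl(2^{K+2},\lambda(f)G\bigr)\le\mu(2^{K+1},A_0)+\|A_1\|+\mu(2^K,A_2)\lesssim 2^{-K/2}.
\]
Monotonicity of $\mu$ then gives $\|\lambda(f)G\|_{2,\infty}\lesssim1$, and the case $K=0$ gives operator norm boundedness.

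For the extension and uniqueness statement, I first carry out the above on the dense subspace of finite linear combinations of the $u_j$, where $\lambda(f)G$ is defined since $fu_j\in L_2$. Equivalently, I can replace $f$ by its truncations $f\,e_{[0,n]}(f)$, apply the uniform estimate, and pass to the limit in operator norm. The unique bounded extension then inherits the $\mathcal L_{2,\infty}$ bound by lower semicontinuity of singular values.

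The main obstacle I expect is the bookkeeping in the last display for $\|A_2\|_2^2$. It requires the counting $\#\{m\ge1:2^{m/2}<h\}\lesssim\log(e+h)$ and the identity $\tau(h\,e_{(s,\infty)}(h))$ applied through the spectral measure of $h$, and it is precisely the logarithm in $L\log L$ that pays for the sum over levels. The other delicate point is the choice of $\sigma_k$. It must grow geometrically in $k-K$ so that $\sum_k\sigma_k2^{-k}$ stays of size $2^{-K}$ while the tails still sum to only a logarithm; I would verify this balance first.
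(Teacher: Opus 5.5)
Your argument is correct, but it controls the bounded part differently from the paper.

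\textbf{The paper's route.} The paper works with a single threshold $s$. It splits $\lambda(f)G$ into a piece of rank at most $\lfloor s^{-2}\rfloor$, a per-column cutoff $Q_su_j=m_s(g_jf)u_j$ with $m_s(t)=\min\{t,s^2/t\}$, and a Hilbert--Schmidt remainder. The columns of $Q_s$ each have norm up to $s$, so a column-by-column square-sum bound would not be summable. The paper therefore writes $m_s=\varphi_s*\psi$ as a multiplicative convolution on $(0,\infty)$, following Hundertmark--Kunstmann--Ried--Vugalter. It then applies Cauchy--Schwarz to the weak operator integral $\int_0^\infty X_rY_r\,dr/r$ to get $\|Q_s\|\le s$.

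\textbf{Your route.} You avoid that device. You group columns into dyadic blocks and let the truncation level $\sigma_k$ grow geometrically in $k-K$. The estimate $\|\sum_k B_kP_k\|^2\le\sum_k\|B_kP_k\|^2$, for operators with mutually orthogonal initial spaces, then gives the convergent series $2^{-K}\sum_{m\ge1}2^{-m/2}$. The extra tail mass pushed into $A_2$ is paid for by the count $\#\{m\ge1:2^{m/2}<h\}\lesssim\log(e+h)$. This count plays the role of the paper's truncated harmonic sum $\sum_{N_s<j<t^2/s^2}1/j\le1+\log_+(2t^2)$. The Hilbert--Schmidt step is the same in both arguments: unitarity of $u_j$ and traciality give $\|bu_j\|_2=\|b\|_2$.

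\textbf{Trade-offs.} Your route is more elementary. It needs no operator integrals and no spectral-calculus identification of $Q_s$ on each $u_j$, but the absolute constant is worse. The paper's route gives the explicit constant $12$ and a decomposition $F_s+Q_s+R_s$ for every $s>0$, which it reuses for the finite-codimension corollary. Your $A_0,A_1,A_2$ would serve that purpose equally well, with worse constants.

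\textbf{One bookkeeping point.} If $g$ has infinitely many zero and infinitely many nonzero entries, no nonincreasing reordering indexed by $\mathbb N$ exists. As in the paper, discard the basis vectors with $g_j=0$, which span $\ker G$, and build the blocks only from the remaining ones.
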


The proof decomposes the product into a finite-rank term, a bounded
cutoff, and a Hilbert--Schmidt remainder.
The estimate for the bounded cutoff is inspired by the multiplicative
convolution argument of Hundertmark, Kunstmann, Ried and Vugalter \cite{HKRV23}.
The estimate for the Hilbert--Schmidt norm of the remainder is via the orthonormal unitary basis $\{u_j\}$ of $L_2(\M)$,
while a truncated harmonic sum produces the logarithmic weight. 
Unlike the classical proof, the argument requires neither a spatial covering nor a prior critical Sobolev embedding.

\medskip

We now apply this estimate to quantum tori. For
$d\geq2$ and a real antisymmetric $d\times d$ matrix $\theta$, let
$\mathbb T_\theta^d$ be the quantum torus with canonical tracial
state $\tau$, normalized by $\tau(1)=1$. Its generating unitaries
$U_1,\ldots,U_d$ satisfy $U_jU_k=e^{\mathrm i\theta_{jk}}U_kU_j$
and give the orthonormal Fourier basis
$\{U^n=U_1^{n_1}\cdots U_d^{n_d}: n\in\mathbb Z^d\}$ of
$L_2(\mathbb T_\theta^d,\tau)$. 

We write  $\Delta U^n=-|n|^2U^n$. The diagonal symbol $g_n=(1+|n|^2)^{-d/4}$ of
$G=(1-\Delta)^{-d/4}$ belongs to $\ell_{2,\infty}(\mathbb Z^d)$,
with a bound depending only on $d$.
Applying Theorem~\ref{thm:intro-abstract-endpoint} in this setting
and using factorization gives the following endpoint estimate.

\begin{theorem}[cf.\ Theorem~\ref{thm:torus-endpoint}]
\label{thm:intro-quantum-endpoint}
Let $d\geq2$ and let $\theta$ be a real antisymmetric $d\times d$
matrix. For every $x\in L\log L(\mathbb T_\theta^d)$, the operator
$(1-\Delta)^{-d/4}\lambda(x)(1-\Delta)^{-d/4}$ belongs to
$\mathcal{L}_{1,\infty}(L_2(\TT_\theta^d))$ and satisfies
\begin{equation}
\label{eq:intro-quantum-endpoint}
 \big\|(1-\Delta)^{-\frac{d}{4}}\lambda(x)(1-\Delta)^{-\frac{d}{4}}\big\|_{1,\infty}
 \leq C_d\|x\|_{L\log L(\mathbb T_\theta^d)}.
\end{equation}
The constant $C_d$ depends only on $d$ and is independent of $\theta$.
\end{theorem}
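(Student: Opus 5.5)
We derive Theorem~\ref{thm:intro-quantum-endpoint} from Theorem~\ref{thm:intro-abstract-endpoint} by a factorization argument, first for $x\geq0$ and then for general $x$ by splitting into positive parts.

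\emph{Setting.} Take $\mathcal M=L_\infty(\mathbb T_\theta^d)$ with its canonical trace $\tau$. The Fourier unitaries $\{U^n\}_{n\in\mathbb Z^d}$ form an orthonormal basis of $L_2(\mathbb T_\theta^d)$. The operator $G=(1-\Delta)^{-d/4}$ is diagonal in this basis with $g_n=(1+|n|^2)^{-d/4}$. A lattice-point count gives
\[
\#\{n:g_n>s\}\leq C_d s^{-2},
\]
so $\|g\|_{\ell_{2,\infty}}\leq C_d$, independently of $\theta$.

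\emph{Positive $x$.} Let $x\geq0$ with $x\in L\log L$, and set $f=x^{1/2}$. Then $f^2=x\in L\log L$ and $\|f\|_{L\log L^{(2)}}^2=\|x\|_{L\log L}$. By Theorem~\ref{thm:intro-abstract-endpoint}, the closure $A$ of $\lambda(f)G$ lies in $\mathcal L_{2,\infty}$ with
\[
\|A\|_{2,\infty}\lesssim C_d\|x\|_{L\log L}^{1/2}.
\]
Since $\lambda(f)$ is selfadjoint and $G$ is bounded and selfadjoint, we have $A^*\supseteq G\lambda(f)$. Hence, on the dense subspace of trigonometric polynomials, the sandwich operator $G\lambda(x)G=G\lambda(f)\lambda(f)G$ coincides with $A^*A$. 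So its bounded extension is $A^*A$, and the quasi-Hölder inequality $\|A^*A\|_{1,\infty}\leq 2\|A\|_{2,\infty}^2$ yields \eqref{eq:intro-quantum-endpoint} for $x\geq0$.

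\emph{Domain bookkeeping.} This is where care is needed, since $\lambda(f)$ is unbounded: $f$ is only $\tau$-measurable and merely in $L\log L^{(2)}$. I would argue as follows.
\begin{itemize}
\item For a trigonometric polynomial $\xi$, the vector $G\xi$ is again a trigonometric polynomial, hence lies in $\mathcal M\subset$ the domain of $\lambda(f)$.
\item For trigonometric polynomials $\xi,\eta$, we get $\langle \lambda(x)G\xi,G\eta\rangle=\langle \lambda(f)G\xi,\lambda(f)G\eta\rangle$, using the identity $\tau(\eta^*G f f G\xi)=\tau((fG\eta)^*(fG\xi))$ in the measurable-operator sense, which is valid because $fG\xi\in L_2$.
\end{itemize}
Consequently the quadratic form of the sandwich on this core is the form of $A^*A$, and this fixes the sandwich's meaning as the bounded operator $A^*A$.

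\emph{General $x$.} For $x\in L\log L$, first write
\[
x=\Re x+\mathrm i\,\Im x,\qquad \Re x=(x+x^*)/2,\quad \Im x=(x-x^*)/(2\mathrm i).
\]
Then split each selfadjoint part into positive and negative parts, $y=y_+-y_-$. This gives four positive elements $x_k$ with $\|x_k\|_{L\log L}\leq C\|x\|_{L\log L}$, since the Luxemburg norm is symmetric and $\mu(y_\pm)\leq\mu(y)$, $\|\Re x\|\leq\|x\|$. Applying the positive case to each $x_k$ and using the quasi-triangle inequality in $\mathcal L_{1,\infty}$ (constant $2$ per sum) gives \eqref{eq:intro-quantum-endpoint} with a constant $C_d$ depending only on $d$. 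The constant is independent of $\theta$ because Theorem~\ref{thm:intro-abstract-endpoint} has an absolute constant and $\|g\|_{\ell_{2,\infty}}$ does not involve $\theta$.
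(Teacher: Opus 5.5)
Your proof is correct. The positive case follows the paper exactly: you apply Theorem~\ref{thm:abstract-endpoint} to $f=x^{1/2}$ and $G=(1-\Delta)^{-d/4}$, and identify the form-defined sandwich with $A^*A$. One small point: $\|A^*A\|_{1,\infty}=\|A\|_{2,\infty}^2$ holds exactly, because $\mu(j,A^*A)=\mu(j,A)^2$, so your factor $2$ is not needed.

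The real difference is how you pass to general $x$.
\begin{itemize}
\item The paper takes the polar decomposition $x=v|x|$ and uses the identity $|x^*|^{1/2}v|x|^{1/2}=x$. It writes the sandwich as $B^*\lambda(v)A$, where
\[
A=\lambda(|x|^{1/2})(1-\Delta)^{-d/4},\qquad B=\lambda(|x^*|^{1/2})(1-\Delta)^{-d/4}.
\]
A single application of the weak H\"older inequality (constant $2$) then gives $C_d=24\kappa_d^2$.
\item You split $x$ into the four positive parts $(\operatorname{Re}x)_\pm$ and $(\operatorname{Im}x)_\pm$. This is more elementary. It needs only the positive case, linearity of the form in $x$, $\|x^*\|_{L\log L}=\|x\|_{L\log L}$, the triangle inequality, and $\mu(t,y_\pm)\le\mu(t,y)$. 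You never have to check a product identity of measurable operators.
\item The price is a worse constant. The quasi-triangle inequality for four summands costs a factor $4$, applied to four pieces each bounded by $12\kappa_d^2\|x\|_{L\log L}$, which gives roughly $192\kappa_d^2$.
\end{itemize}
The statement only claims a constant depending on $d$ alone, so both routes prove it.

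A minor notational point: for unbounded $x$, the expression $\langle\lambda(x)G\xi,G\eta\rangle$ is not an inner product in $H$, because $xG\xi$ lies only in $L\log L$. It should be read as $\tau((G\eta)^*xG\xi)$, which is what you use.
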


This extends \eqref{eq:intro-classical-endpoint} to quantum tori
and answers the question raised in \cite[Remark~6.8]{MP22}.
Theorem~\ref{thm:intro-quantum-endpoint} replaces the $L_p$ 
hypothesis in \cite[Theorem~6.5(2)]{MP22} with $L\log L$, including potentials in no $L_p(p>1)$; 
similar to \eqref{eq:intro-quantum-endpoint}, the corresponding bound for Riesz-potential sandwich $(-\Delta)^{-d/4}\lambda(x)(-\Delta)^{-d/4}$
also follows from Theorem \ref{thm:abstract-endpoint}.
Moreover, we give the same estimates for pseudodifferential operators of order $-d/2$ on both sides,
extending the critical case of \cite[Proposition~4.11(2)]{MP23}, see Corollary~\ref{cor:pdo-sandwich-endpoint}.
When $\theta=0$, our argument also gives an elementary proof of the classical estimate \eqref{eq:intro-classical-endpoint}.

\medskip

Theorem \ref{thm:intro-abstract-endpoint} also recovers the Cwikel estimates for
matrix-valued potentials in \cite[Proposition~3.13 and Corollary~3.15]{P22}. 
Indeed, one can first apply it to the case 
\begin{equation*}
      \M=L_\infty(\mathbb T^d)\mathbin{\overline\otimes}\mathbb M_r, \quad \text{ with normalized trace } \tau=\int_{\TT^d}\otimes\ \frac 1r \Tr, 
\end{equation*}
and let $G=(1-\Delta)^{-d/4}\otimes \operatorname{id}_{\mathbb{M}_r}$. 
The tensor product of the Fourier basis and a unitary orthonormal basis of $\mathbb M_r$ forms a unitary orthonormal basis of $L_2(\M,\tau)$ on which $G$ is diagonal. 
The local trivializations and partition of unity used in \cite[Lemma~3.9]{P22} then yield \cite[Corollary 3.15]{P22}, i.e. 
\begin{equation*}
      \|\Delta_E^{-d/4}u\Delta_E^{-d/4}\|_{1,\infty} \leq C_{\nabla}\Norm{u}_{L\log L(X;\operatorname{End}E)} 
\end{equation*}
for a Hermitian vector bundle $E$ over a closed $d$-dimensional manifold $X$ with connection $\nabla$, where $\Delta_E=\nabla^*\nabla$. 
A usual factorization argument then implies the same estimate for pseudodifferential operator sandwiches in \cite[Proposition~3.13]{P22}. 

\medskip

The endpoint estimate in Theorem~\ref{thm:intro-quantum-endpoint} in turn
yields a critical Sobolev inequality on quantum tori.
Let $W_2^{d/2}(\mathbb T_\theta^d)=\operatorname{Dom}((1-\Delta)^{d/4})$
with norm $\|u\|_{W_2^{d/2}}=\|(1-\Delta)^{d/4}u\|_2$, and let
$\exp(L_2)$ be the Orlicz space associated with $e^{t^2}-1$.
The operator norm bound that follows from \eqref{eq:intro-quantum-endpoint}, combined with
K\"othe duality, yields the following critical embedding.
\begin{corollary}[cf.\ Corollary~\ref{cor:critical sobolev}]
\label{cor:intro-critical-sobolev}
For $d\geq2$, we have
\begin{equation*}
 W_2^{\frac{d}{2}}(\mathbb T_\theta^d)
 \hookrightarrow\exp(L_2)(\mathbb T_\theta^d),\qquad
 \|u\|_{\exp(L_2)}\leq C_d'\|u\|_{W_2^{d/2}},
\end{equation*}
for every $u\in W_2^{d/2}(\mathbb T_\theta^d)$.
\end{corollary}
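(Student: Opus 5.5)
The plan is to deduce the embedding from the operator-norm consequence of Theorem~\ref{thm:intro-quantum-endpoint}, and then dualize against $L\log L$. First, since $\|\cdot\|_\infty\le\|\cdot\|_{1,\infty}$, Theorem~\ref{thm:intro-quantum-endpoint} gives
\[
 \bigl\|(1-\Delta)^{-\frac d4}\lambda(x)(1-\Delta)^{-\frac d4}\bigr\|_\infty\le C_d\|x\|_{L\log L(\TT_\theta^d)},\qquad x\in L\log L(\TT_\theta^d).
\]
Take first $u$ a Fourier polynomial, and put $v=(1-\Delta)^{d/4}u\in L_2$, so $\|v\|_2=\|u\|_{W_2^{d/2}}$. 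For bounded $x$ we have the identity
\[
 \tau(u^*xu)=\langle \lambda(x)u,u\rangle=\bigl\langle (1-\Delta)^{-\frac d4}\lambda(x)(1-\Delta)^{-\frac d4}v,v\bigr\rangle .
\]
Hence $|\tau(x\,uu^*)|\le C_d\|x\|_{L\log L}\|u\|_{W_2^{d/2}}^2$. By density and the bounded extension in Theorem~\ref{thm:intro-quantum-endpoint}, this holds for all $x\in L\log L$, or simply for all $x\in L_\infty$, which suffices below.

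Next I use Köthe duality for noncommutative Orlicz spaces over the finite trace $\tau$. The Köthe dual of $L\log L$, with Young function $t\log(e+t)$, is $\exp(L_1)$, the Orlicz space for $e^t-1$ up to equivalent norms. Taking the supremum over $x\in L_\infty$ with $\|x\|_{L\log L}\le1$ therefore gives
\[
 \|uu^*\|_{\exp(L_1)}\lesssim C_d\|u\|_{W_2^{d/2}}^2 .
\]
This uses that $\sup\{|\tau(xy)|:\|x\|_{L\log L}\le1,\ x\in L_\infty\}$ is an equivalent norm on $\exp(L_1)$ for positive $y\in L_1$. That fact follows from the commutative statement applied in the abelian von Neumann subalgebra generated by the spectral projections of $y$, since $L\log L$ norms of functions of $y$ computed there agree with those in $\TT_\theta^d$.

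Then I convert from $uu^*$ to $u$. We have $|u^*|^2=uu^*$, and $\mu(u)=\mu(u^*)=\mu(|u^*|)$. The Orlicz norm of $\exp(L_2)$ is the $2$-convexification of $\exp(L_1)$, that is $\|w\|_{\exp(L_2)}\asymp\||w|^2\|_{\exp(L_1)}^{1/2}$. This holds because $e^{t^2}-1=\Psi(t^2)$ with $\Psi(s)=e^s-1$, so the Luxemburg norms satisfy $\|w\|_{\Psi(\cdot^2)}=\||w|^2\|_\Psi^{1/2}$ exactly. Hence
\[
 \|u\|_{\exp(L_2)}=\|u^*\|_{\exp(L_2)}\asymp\|uu^*\|_{\exp(L_1)}^{1/2}\le C_d'\|u\|_{W_2^{d/2}} .
\]
Finally I extend from Fourier polynomials to all of $W_2^{d/2}$. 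Polynomials $u_k\to u$ in $W_2^{d/2}$ form a Cauchy sequence in $\exp(L_2)$. Since they also converge in $L_2$, the limit in $\exp(L_2)$, which embeds continuously in $L_2$, is $u$.

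I expect the main obstacle to be the duality step: identifying the Köthe dual of $L\log L(\TT_\theta^d)$ with $\exp(L_1)$ with explicit equivalence constants, and justifying that testing against bounded $x$ suffices. I would handle this by reducing to the commutative case via spectral theory of the single positive operator $uu^*$, where the classical Orlicz duality $\|y\|_{\Phi^*}\asymp\sup\{\int|xy|:\|x\|_\Phi\le1\}$ applies.
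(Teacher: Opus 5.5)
Your proposal is correct and follows essentially the same route as the paper: you use the operator-norm bound $\|\cdot\|_\infty\le\|\cdot\|_{1,\infty}$ from Theorem~\ref{thm:intro-quantum-endpoint}, the identity $\tau(z\,ww^*)=\langle(1-\Delta)^{-d/4}\lambda(z)(1-\Delta)^{-d/4}v,v\rangle$ for $w=(1-\Delta)^{-d/4}v$, K\"othe duality between $L\log L$ and $\exp(L_1)$, and the exact $2$-convexification $\|w\|_{\exp(L_2)}^2=\||w^*|^2\|_{\exp(L_1)}$. The differences are only technical: you work with Fourier polynomials and bounded test elements and then pass to all of $W_2^{d/2}$ by density, which avoids extending the form identity to general $y\in L_2$ and unbounded $z$. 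You also justify the duality by reducing to the abelian algebra generated by $uu^*$, whereas the paper cites the noncommutative K\"othe duality theorem of Dodds--Dodds--de Pagter.
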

This extends the classical critical exponential Sobolev embedding theorem in \cite{Str1972} to quantum tori; see also earlier estimates in \cite{Yudovich1961}.
Moreover, for each positive $x\in L\log L(\mathbb T_\theta^d)$ and integer
$n\geq1$, Corollary~\ref{cor:sobolev-subspace} provides a closed
subspace $F_n\subset W_2^{d/2}(\mathbb T_\theta^d)$ of codimension
at most $n$ on which
\[
 \tau(u^*xu)\leq\frac{C_d}{n}\|x\|_{L\log L}\|u\|_{W_2^{d/2}}^2,
 \qquad u\in F_n.
\]
This is a noncommutative counterpart of the weighted approximation estimate \eqref{eq:Solomyak approximation}.

\medskip

For the spectral applications, assume $d\geq2$. Combining the endpoint
estimate with Proposition \ref{prop:ow-bounded-input}, we obtain Weyl's laws for
$S(1-\Delta)^{-d/4}\lambda(x)(1-\Delta)^{-d/4}T$,
where $x\in L\log L(\mathbb T_\theta^d)$ and $S,T\in\Pi$. Put \(H=L_2(\mathbb T_\theta^d,\tau)\).

Write $C(\mathbb T_\theta^d)$ for the $C^*$-algebra generated by
$U_1,\ldots,U_d$. For $g\in C(\mathbb S^{d-1})$, let $\pi(g)$ be
the Fourier multiplier given by
$\pi(g)U^n=g(n/|n|)U^n$ for $n\neq0$ and
$\pi(g)1=0$.
Then $\Pi$ is generated by $\lambda(C(\mathbb T_\theta^d))$ and
$\pi(C(\mathbb S^{d-1}))$. Its principal symbol map is the
$*$-homomorphism
\[
 \begin{aligned}
 \operatorname{sym}:\Pi&\longrightarrow
C(\mathbb T_\theta^d)\otimes_{\min}C(\mathbb S^{d-1}),\\
 \operatorname{sym}(\lambda(y)\pi(g))(s)&=y\,g(s),
 \qquad y\in C(\mathbb T_\theta^d),\quad g\in C(\mathbb S^{d-1});
 \end{aligned}
\]
see Section~\ref{subsec:principal-symbol-algebra}.
We write $(\mathcal{L}_{1,\infty}(H))_0$ for the closure of the
finite-rank operators in the weak trace class quasi-norm.

\begin{theorem}[cf.\ Theorem~\ref{thm:ow-sandwich-weyl}]
\label{thm:intro-sandwich-weyl}
Let $d\geq2$, $S,T\in\Pi$, $x\in L\log L(\mathbb T_\theta^d)$,
and let $R\in(\mathcal{L}_{1,\infty}(H))_0$. Set
\[
 \begin{aligned}
 K&=S(1-\Delta)^{-\frac{d}{4}}\lambda(x)(1-\Delta)^{-\frac{d}{4}}T+R,\\
 a(s)&=\operatorname{sym}(S)(s)x\operatorname{sym}(T)(s).
 \end{aligned}
\]
Then $K\in\mathcal{L}_{1,\infty}(H)$,
$a\in C(\mathbb S^{d-1};L_1(\mathbb T_\theta^d))$, and
\begin{equation}
\label{eq:intro-weyl}
 \lim_{j\to\infty}(j+1)\mu(j,K)
 =\frac{1}{d}\int_{\mathbb S^{d-1}}\tau(|a(s)|)\,ds.
\end{equation}
The function $a$ is determined, up to equality almost everywhere,
by $K$ modulo $(\mathcal L_{1,\infty}(H))_0$.
We call $a$ the generalized principal symbol of order $-d$
of $K$ on $\mathbb S^{d-1}$.
If $K=K^*$, then $a=a^*$ almost everywhere and
\[
 \lim_{j\to\infty}(j+1)\lambda_j^\pm(K)
 =\frac{1}{d}\int_{\mathbb S^{d-1}}\tau(a(s)_\pm)\,ds.
\]
\end{theorem}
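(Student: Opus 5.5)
The plan is an approximation argument. The Weyl law is first established for bounded, even smooth, potentials. It is then transferred to $L\log L$ potentials using the endpoint Cwikel estimate of Theorem~\ref{thm:intro-quantum-endpoint} together with the continuity of the limiting functional.

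\textbf{Step 1: membership in $\mathcal{L}_{1,\infty}$ and the symbol $a$.} Since $S,T$ are bounded, Theorem~\ref{thm:intro-quantum-endpoint} gives $K\in\mathcal{L}_{1,\infty}(H)$ with
\[
\|K-R\|_{1,\infty}\le C_d\|S\|\,\|T\|\,\|x\|_{L\log L}.
\]
Since $\operatorname{sym}(S),\operatorname{sym}(T)\in C(\mathbb S^{d-1};C(\mathbb T_\theta^d))$ and $x\in L\log L\subset L_1$, the noncommutative Hölder inequality gives $a\in C(\mathbb S^{d-1};L_1)$, with
\[
\sup_s\|a(s)\|_1\le \|\operatorname{sym}S\|_\infty\|\operatorname{sym}T\|_\infty\|x\|_1.
\]

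\textbf{Step 2: the bounded case.} For $x\in C(\mathbb T_\theta^d)$, we have $\lambda(x)\in\Pi$. Commuting $(1-\Delta)^{-d/4}$ past elements of $\Pi$ modulo $(\mathcal{L}_{1,\infty})_0$ writes $K=A(1-\Delta)^{-d/2}$ modulo $(\mathcal L_{1,\infty})_0$, where $A=S\lambda(x)T\in\Pi$. This is where I would invoke the earlier Proposition~\ref{prop:ow-bounded-input}, which I expect packages the results of \cite{MSZ19,Z26}. It yields
\[
\lim_j (j+1)\mu(j,K)=\tfrac1d\int_{\mathbb S^{d-1}}\tau(|\operatorname{sym}(A)(s)|)\,ds ,
\]
and $\operatorname{sym}(A)=a$. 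The perturbation $R\in(\mathcal L_{1,\infty})_0$ does not change the limits, by the standard Ky Fan/Birman--Solomyak lemma. The selfadjoint case is handled the same way for $\lambda_j^\pm$: replace $K$ by $(K+K^*)/2$, which differs from $K$ by an element of $(\mathcal L_{1,\infty})_0$ and has symbol $\operatorname{Re}a$.

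\textbf{Step 3: density.} For general $x\in L\log L$, choose $x_k\in C(\mathbb T_\theta^d)$ with $\|x-x_k\|_{L\log L}\to0$. Continuity of the Luxemburg norm on $L\log L$ holds since $\Phi$ satisfies $\Delta_2$, so polynomials in the $U_j$ are dense; Fejér means can be used for this. Denote by $K_k$ and $a_k$ the operator and symbol built from $x_k$. Step 1 gives $\|K-K_k\|_{1,\infty}\to0$ and
\[
\int\tau(|a-a_k|)\,ds\lesssim\|x-x_k\|_1\to0 .
\]
The standard fact that $\limsup$ and $\liminf$ of $(j+1)\mu(j,\cdot)$ are continuous in $\mathcal L_{1,\infty}$ then allows passing to the limit. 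The same holds for $\lambda_j^\pm$ in the selfadjoint case: one approximates by selfadjoint $K_k$, using $x_k$ chosen so that the selfadjoint part works out. This gives \eqref{eq:intro-weyl}, since $\tau(|\cdot|)$ and $\tau((\cdot)_\pm)$ are $L_1$-Lipschitz.

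\textbf{Step 4: uniqueness of $a$ and $a=a^*$.} Suppose two data yield the same $K$ modulo $(\mathcal L_{1,\infty})_0$. The difference then has a symbol $b$ with $\int\tau(|b|)=0$, by linearity of the construction and \eqref{eq:intro-weyl}. Hence $b=0$ a.e. Applying this to $K-K^*$, whose symbol is $a-a^*$, gives $a=a^*$ a.e. when $K=K^*$.

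The main obstacle is Step 2 for the selfadjoint case and the linearity issue in Step 4. The symbol of $K$ must be well defined on sums of several sandwiches $\sum_i S_i(1-\Delta)^{-d/4}\lambda(x_i)(1-\Delta)^{-d/4}T_i$, so the Weyl law should really be proved for finite sums. Steps 1--3 extend verbatim to finite sums, and I would state the argument in that generality from the start.
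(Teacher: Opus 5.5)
Your proposal is correct and is essentially the paper's argument. The paper likewise approximates $x$ in $L\log L$ by bounded potentials and reduces $K_n$ modulo $(\mathcal L_{1,\infty}(H))_0$ to $A_n(1-\Delta)^{-d/2}$ with $A_n=S\lambda(x_n)T\in\Pi$, using the commutator inclusion of Proposition~\ref{prop:ow-bounded-input}; it passes to the limit with the Birman--Solomyak lemmas, and gets uniqueness by running the same reduction on $K_{1,n}-K_{2,n}$ with $A_{1,n}-A_{2,n}\in\Pi$, which is exactly your finite-sum remedy.

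The one imprecision is in the selfadjoint case. $K=K^*$ does not let you choose $x_k$ so that $K_k$ is selfadjoint, since $S\neq T^*$ or a non-selfadjoint $R$ may be present. As in the paper, use $\operatorname{Re}K_k\to K$ in $\mathcal L_{1,\infty}(H)$, note that $\operatorname{Re}K_k-(1-\Delta)^{-d/4}(\operatorname{Re}A_k)(1-\Delta)^{-d/4}$ is a selfadjoint element of $(\mathcal L_{1,\infty}(H))_0$, and then invoke $a=a^*$ a.e. to replace $\operatorname{Re}a$ by $a$ in the limit.
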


Theorem~\ref{thm:intro-sandwich-weyl} also gives Weyl's laws
for $Q\lambda(x)P$.
Here $P$ and $Q$ are classical pseudodifferential operators
of order $-d/2$ on $\mathbb T_\theta^d$, and
$x\in L\log L(\mathbb T_\theta^d)$.
If $p$ and $q$ are their principal symbols restricted to
$\mathbb S^{d-1}$, then $a(s)=q(s)xp(s)$;
see Corollary~\ref{cor:ow-classical-factors}.
For these sandwiches,
\cite[Theorem~5.2(2)]{MP23} gives strong measurability and
the trace formula for $L_p$ potentials, $p>1$.
Theorem~\ref{thm:ow-connes} extends the trace formula to
$L\log L$ potentials.
Corollary~\ref{cor:ow-classical-factors} gives singular-value
asymptotics for $Q\lambda(x)P$ and, when $x=x^*$,
positive and negative eigenvalue asymptotics for
$P^*\lambda(x)P$.
The spectral and trace formulas give quantum torus counterparts of
\cite[Theorems~4.1 and~6.1]{P22}.

Consequently, Theorem~\ref{thm:ow-connes} shows that every continuous
normalized trace $\varphi$ on $\mathcal L_{1,\infty}(H)$ satisfies
\[
 \varphi(K)=\frac{1}{d}\int_{\mathbb S^{d-1}}\tau(a(s))\,ds,
 \qquad
 \varphi(|K|)=\frac{1}{d}\int_{\mathbb S^{d-1}}\tau(|a(s)|)\,ds.
\]
For $K=A(1-\Delta)^{-d/2}$, $A\in\Pi$, the
trace formula recovers \cite[Theorem~6.5]{MSZ19} for every
continuous normalized trace, while \eqref{eq:intro-weyl} gives
ordinary singular-value asymptotics.

For the basic sandwich, these results yield
\[
 \begin{aligned}
 \lim_{j\to\infty}(j+1)
 \lambda_j^\pm\bigl((1-\Delta)^{-\frac{d}{4}}\lambda(x)(1-\Delta)^{-\frac{d}{4}}\bigr)
 &=\frac{|\mathbb S^{d-1}|}{d}\tau(x_\pm),\qquad x=x^*,\\
 \varphi\bigl((1-\Delta)^{-\frac{d}{4}}\lambda(x)(1-\Delta)^{-\frac{d}{4}}\bigr)
 &=\frac{|\mathbb S^{d-1}|}{d}\tau(x),
 \end{aligned}
\]
for $x\in L\log L(\mathbb T_\theta^d)$ and every continuous
normalized trace $\varphi$ on $\mathcal L_{1,\infty}(H)$.
For the basic Riesz-potential sandwich, the spectral asymptotics
and integration formulas in
\cite[Proposition~8.5 and Theorem~8.8]{P26NCG}
assume $x\in L_p(\mathbb T_\theta^d)$ for some $p>1$.
Theorems~\ref{thm:ow-sandwich-weyl}
and~\ref{thm:ow-connes} extend these formulas to
$x\in L\log L(\mathbb T_\theta^d)$.
They also allow outer factors $S,T\in\Pi$.
The same results hold for Bessel potentials.

Finally, for selfadjoint $x\in L\log L(\mathbb T_\theta^d)$ and
$h>0$, the form sum $h^d(1-\Delta)^{d/2}+\lambda(x)$ defines a
semibounded selfadjoint operator with compact resolvent. Write
$N_-$ for its number of strictly negative eigenvalues, counted with
multiplicities. Combining the Birman--Schwinger principle with the eigenvalue asymptotics in Theorem \ref{thm:intro-sandwich-weyl} gives the semiclassical law
\[
 \lim_{h\downarrow0}h^dN_-\bigl(h^d(1-\Delta)^{\frac{d}{2}}+\lambda(x)\bigr)
 =\frac{|\mathbb S^{d-1}|}{d}\tau(x_-),
 \qquad x=x^*\in L\log L(\mathbb T_\theta^d),
\]
as well as an $L\log L$ bound for the negative eigenvalue count
(Corollary~\ref{cor:ow-semiclassical}).

\medskip 

\begingroup
\renewcommand{\addcontentsline}[3]{}
\subsection*{Organization of the paper} 
Our paper is organized as follows. 
Section~\ref{sec:preliminaries} fixes the notation and recalls the
analytic background. Section~\ref{sec:cwikel-estimates} proves the
abstract endpoint estimate, and Section~\ref{sec:consequences}
develops its quantum torus and Sobolev consequences.
Section~\ref{sec:orlicz-weyl} contains Weyl's laws, integration
formulas, and a semiclassical application.
\par
\endgroup

\section{Preliminaries}
\label{sec:preliminaries}

\subsection{Noncommutative spaces}

Let $\mathcal M\subset\mathcal B(\mathcal H)$ be a von Neumann algebra
with a normal, semifinite, faithful trace $\tau$. We use the measurable-operator realization
of noncommutative $L_p$-spaces; see \cite{PX03,Xu07,LSZ12,DPS23}.
Write $L_0(\mathcal M)$ for the algebra of closed densely defined
$\tau$-measurable operators affiliated with $\mathcal M$. Here affiliation means commutation with
the unitaries of $\mathcal M'$, including invariance of the domain;
$\tau$-measurability means that, for every $\varepsilon>0$, the domain
contains $e\mathcal H$ for a projection $e\in\mathcal M$ with
$\tau(1-e)<\varepsilon$.

For $x\in L_0(\mathcal M)$, set
\[
 d_x(s)=\tau\bigl(\mathbf1_{(s,\infty)}(|x|)\bigr),\qquad
 \mu(t,x)=\inf\{s\geq0:d_x(s)\leq t\},\qquad s,t>0,
\]
where $d_x(0)=\tau(\mathbf1_{(0,\infty)}(|x|))$, and put
$\mu(0,x)=\lim_{t\downarrow0}\mu(t,x)$.
For $1\leq p<\infty$,
\[
 L_p(\mathcal M)
 =\{x\in L_0(\mathcal M):\tau(|x|^p)<\infty\},\qquad
 \|x\|_p
 =\left(\int_0^\infty\mu(t,x)^p\,dt\right)^{\frac{1}{p}}.
\]
We set $L_\infty(\mathcal M)=\mathcal M$, with its operator norm.
Products of measurable operators are understood as their closures.
The noncommutative form of H\"older's inequality gives
\[
 \|xy\|_r\leq\|x\|_p\|y\|_q,
 \qquad \frac{1}{r}=\frac{1}{p}+\frac{1}{q},
 \qquad 1\leq p,q,r\leq\infty.
\]
We also use
\[
 \mu(t,x)=\mu(t,x^*)=\mu(t,|x|),\qquad
 \mu(t,|x|^\alpha)=\mu(t,x)^\alpha\quad(\alpha>0).
\]

For $0<p<\infty$, the weak $L_p$-space has quasi-norm
\[
 \|x\|_{L_{p,\infty}(\mathcal M)}
 =\sup_{t>0}t^{\frac{1}{p}}\mu(t,x)
 =\sup_{s>0}s\,d_x(s)^{\frac{1}{p}}.
\]
When $\tau(1)=1$, we use the Orlicz space associated with
\[
 \Phi(t)=t\log(e+t),\qquad t\geq0.
\]
It is denoted by $L_\Phi(\mathcal M)=L\log L(\mathcal M)$ and consists
of all $x\in L_0(\mathcal M)$ for which
$\tau\big(\Phi\big(\frac{|x|}{c}\big)\big)<\infty$ for some $c>0$. Its Luxemburg norm is
\begin{equation}
\label{eq:prelim-luxemburg}
 \|x\|_{L_\Phi(\mathcal M)}
 =\inf\Bigl\{c>0:\tau\Bigl(\Phi\Bigl(\frac{|x|}{c}\Bigr)\Bigr)\leq1\Bigr\}.
\end{equation}
This is a Banach norm; see \cite[Section~2]{LM11}. We shall use
\begin{equation*}
 \|x\|_1\leq\|x\|_{L_\Phi},\qquad
 \|x^*\|_{L_\Phi}=\|x\|_{L_\Phi},\qquad
 \|axb\|_{L_\Phi}\leq\|a\|_\infty\|x\|_{L_\Phi}\|b\|_\infty
\end{equation*}
for $a,b\in\mathcal M$.
The first inequality follows from $\Phi(t)\geq t$.
The other two follow from $\mu(t,x^*)=\mu(t,x)$ and
\[
 \mu(t,axb)\leq\|a\|_\infty\mu(t,x)\|b\|_\infty,
 \qquad t>0.
\]

For each $1<p<\infty$, the continuous inclusions
\[
 L_\infty(\mathcal M)\subset L_p(\mathcal M)
 \subset L_\Phi(\mathcal M)\subset L_1(\mathcal M)
\]
follow from $\tau(1)=1$ and $\Phi(t)\leq C_p(t+t^p)$.

\subsection{Weak Schatten ideals and Weyl's laws}
\label{subsec:weak-schatten}

Let $H$ be a separable Hilbert space. Operator ideals on $H$ are
defined using the ordinary trace $\operatorname{Tr}_H$ on $\mathcal B(H)$.
For a compact operator $T$, its singular values are $\mu(j,T)$,
$j\in\mathbb N_0$, in nonincreasing order. Thus
\[
 \mathcal{L}_{p,\infty}(H)
 =L_{p,\infty}(\mathcal B(H),\operatorname{Tr}_H),\qquad
 \|T\|_{p,\infty}
 =\sup_{j\geq0}(j+1)^{\frac{1}{p}}\mu(j,T).
\]
We write $\mathcal{L}_p(H)$ for the usual Schatten ideal. In particular,
$\mathcal{L}_{1,\infty}(H)$ is the weak trace class.
The closed subspace
\[
 (\mathcal{L}_{p,\infty}(H))_0
 =\{T\in\mathcal{L}_{p,\infty}(H):
       (j+1)^{\frac{1}{p}}\mu(j,T)\longrightarrow0\}
\]
is the closure of the finite-rank operators in this quasi-norm.

For compact operators $A,B$, the singular-value inequalities give
\begin{equation}
\label{eq:prelim-singular-inequalities}
 \mu(j+k,A+B)\leq\mu(j,A)+\mu(k,B),\qquad
 \mu(j+k,AB)\leq\mu(j,A)\mu(k,B).
\end{equation}
We also have the following form of H\"older's inequality:
\begin{equation}\label{eq:prelim-weak-holde}
 \|TS\|_{r,\infty}
 \leq
 \frac{(p+q)^{\frac1p+\frac1q}}
             {q^{\frac1p}p^{\frac1q}}
 \|T\|_{p,\infty}\|S\|_{q,\infty},
\end{equation}
where $p,q,r>0$ and
$\frac1r=\frac1p+\frac1q$.
This holds for $T\in\mathcal L_{p,\infty}$ and
$S\in\mathcal L_{q,\infty}$.
The constant is optimal; see
\cite[Theorem~4]{SZHolder2021}.
The estimate \eqref{eq:prelim-weak-holde}, combined with
finite-rank approximation, gives
\begin{equation}
\label{eq:prelim-small-product}
 (\mathcal{L}_{p,\infty})_0\mathcal{L}_{q,\infty}
 +\mathcal{L}_{p,\infty}(\mathcal{L}_{q,\infty})_0
 \subset(\mathcal{L}_{r,\infty})_0.
\end{equation}

For $T\in\mathcal L_{p,\infty}(H)$ and $A,B\in\mathcal B(H)$,
\begin{equation}
\label{eq:prelim-ideal-property}
 \|ATB\|_{p,\infty}\leq\|A\|\|T\|_{p,\infty}\|B\|.
\end{equation}
Bounded left and right multiplication also preserves
$(\mathcal L_{p,\infty}(H))_0$.

For compact selfadjoint $K$, let $\lambda_j^\pm(K)$ be its positive
eigenvalues and the absolute values of its negative eigenvalues,
respectively, in nonincreasing order, with multiplicities and zeros
appended to finite lists. Thus $\lambda_j^\pm(K)=\mu(j,K_\pm)$.
Glazman's lemma (see, e.g., \cite[Lemma~7.1]{MP22}) gives
\begin{equation}
\label{eq:prelim-signed-inequality}
 \lambda_{j+k}^\pm(B+C)
 \leq\lambda_j^\pm(B)+\mu(k,C)
\end{equation}
for compact selfadjoint $B,C$.

We recall the spectral approximation results used below.
The first lemma follows from \cite[Proposition~2.2]{FK86}.

\begin{lemma}
\label{lem:distribution-singular-asymptotics}
Let $p>0$ and $0\leq A\in\mathcal{L}_{p,\infty}(H)$. Either one of
the following limits exists if and only if the other does, and
\[
 \lim_{s\downarrow0}s^p\operatorname{Tr}_H
       \bigl(\mathbf1_{(s,\infty)}(A)\bigr)
 =\lim_{j\to\infty}(j+1)\mu(j,A)^p.
\]
\end{lemma}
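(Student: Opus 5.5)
The plan is to reduce everything to the counting function of the singular values, using the elementary duality between $\mu(\cdot,A)$ and the distribution function. Put $n(s)=\operatorname{Tr}_H(\mathbf1_{(s,\infty)}(A))$ for $s>0$. Since $A\geq0$ is compact, $n(s)=\#\{j\geq0:\mu(j,A)>s\}$. This quantity is finite, nonincreasing and right-continuous in $s$, and it satisfies
\[
 \mu(j,A)>s\iff n(s)>j .
\]
First I reduce to $p=1$. Since $\mu(j,A^p)=\mu(j,A)^p$ and $n_{A^p}(s^p)=n_A(s)$, we have
\[
 s^pn_A(s)=t\,n_{A^p}(t)\quad\text{with } t=s^p .
\]
The substitution $t=s^p$ is a homeomorphism of $(0,\infty)$ preserving $s\downarrow0$. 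Also $A^p\in\mathcal L_{1,\infty}$. So it suffices to treat $p=1$. If $A$ has finite rank, both sides vanish identically for small $s$ and large $j$, so I assume $\mu(j,A)>0$ for all $j$. Compactness then gives $\mu(j,A)\downarrow0$ and $n(s)\to\infty$ as $s\downarrow0$. Write $\mu_j=\mu(j,A)$.

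\emph{Step 1: $(j+1)\mu_j\to c$ implies $s\,n(s)\to c$.} Fix $\varepsilon>0$ and choose $J$ with
\[
 (c-\varepsilon)/(j+1)\leq\mu_j\leq(c+\varepsilon)/(j+1)\quad\text{for } j\geq J .
\]
For the upper bound, every $j\geq J$ with $\mu_j>s$ satisfies $j+1<(c+\varepsilon)/s$. Hence $n(s)\leq J+(c+\varepsilon)/s$ and $\limsup_{s\downarrow0} s\,n(s)\leq c+\varepsilon$. For the lower bound (with $c>\varepsilon$), every $j\geq J$ with $j+1<(c-\varepsilon)/s$ has $\mu_j>s$. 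Thus $n(s)\geq(c-\varepsilon)/s-1-J$, which gives the matching $\liminf$. The case $c=0$ uses only the upper bound.

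\emph{Step 2: $s\,n(s)\to c$ implies $(j+1)\mu_j\to c$.} Here the only delicate point is multiplicities, i.e.\ jumps of $n$, and I handle it with one-sided limits.

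For the upper bound, for every $s<\mu_j$ we have $n(s)\geq j+1$, so $(j+1)s\leq s\,n(s)$. Letting $s\uparrow\mu_j$ gives
\[
 (j+1)\mu_j\leq\sup_{0<s<\mu_j}s\,n(s),
\]
and the right-hand side tends to $c$ (in the $\limsup$ sense) as $j\to\infty$ because $\mu_j\to0$.

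For the lower bound, at $s=\mu_j$ itself we have $n(\mu_j)\leq j$, since $\mu_i>\mu_j$ forces $i<j$. Hence $j\mu_j\geq\mu_j n(\mu_j)$, whose $\liminf$ is $c$ as $\mu_j\to0$. Since $\mu_j\to0$, we also have $\lim(j+1)\mu_j=\lim j\mu_j$, so the two limits agree.

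Combining Steps 1 and 2 gives the equivalence and equality of the limits. The main obstacle is this bookkeeping at jump points in Step 2. The indices $j$ inside a block of equal singular values must be controlled using both $n(\mu_j)\leq j$ and $n(\mu_j^-)\geq j+1$. The remaining steps are routine $\varepsilon$-arguments, which is essentially \cite[Proposition~2.2]{FK86}.
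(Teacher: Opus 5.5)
Your proof is correct and supplies the argument the paper delegates to \cite[Proposition~2.2]{FK86}: the generalized-inverse relation between $\mu(\cdot,A)$ and the distribution function (your $\mu(j,A)>s\iff n(s)>j$), followed by the routine $\varepsilon$-translation of asymptotics, with multiplicity blocks handled correctly through $n(\mu_j)\le j$ and $n(s)\ge j+1$ for $s<\mu_j$. The only slip is cosmetic: for finite-rank $A$ one has $s\,n(s)=s\operatorname{rank}A$ for small $s$, which tends to $0$ rather than vanishing identically, and that is all you need.
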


The next two lemmas are due to Birman and Solomyak;
see \cite[Section~11.6]{BS1987}.

\begin{lemma}
\label{lem:weak-weyl-stability}
Let $p>0$, $A\in\mathcal{L}_{p,\infty}(H)$, and
$R\in(\mathcal{L}_{p,\infty}(H))_0$. If
$\lim_{j\to\infty}(j+1)^{\frac{1}{p}}\mu(j,A)$ exists, then
\[
 \lim_{j\to\infty}(j+1)^{\frac{1}{p}}\mu(j,A+R)
 =\lim_{j\to\infty}(j+1)^{\frac{1}{p}}\mu(j,A).
\]
\end{lemma}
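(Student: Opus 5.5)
The statement concerns singular values, so I will work with the decreasing sequence $a_j:=(j+1)^{1/p}\mu(j,A)$ and prove that $\liminf$ and $\limsup$ of $(j+1)^{1/p}\mu(j,A+R)$ both equal $L:=\lim_j a_j$. The only tools needed are the Ky Fan type inequality $\mu(j+k,A+B)\leq\mu(j,A)+\mu(k,B)$ from \eqref{eq:prelim-singular-inequalities} and the defining property of $(\mathcal L_{p,\infty}(H))_0$, namely $(k+1)^{1/p}\mu(k,R)\to0$. Throughout, fix $\varepsilon\in(0,1)$ and choose $k_0$ with $\mu(k,R)\leq\varepsilon^{?}(k+1)^{-1/p}\eta$ for $k\geq k_0$, where $\eta>0$ is a small parameter tied to $\varepsilon$ below.

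\textbf{Upper bound.} For large $n$, write $n=j+k$ with $j=\lfloor(1-\varepsilon)n\rfloor$ and $k=n-j\approx\varepsilon n$, so $k\geq k_0$ eventually. Then
\[
 \mu(n,A+R)\leq\mu(j,A)+\mu(k,R)\leq (j+1)^{-1/p}a_j+\eta(k+1)^{-1/p}.
\]
Multiplying by $(n+1)^{1/p}$ and using $(n+1)/(j+1)\to(1-\varepsilon)^{-1}$ and $(n+1)/(k+1)\to\varepsilon^{-1}$, we obtain
\[
 \limsup_n (n+1)^{1/p}\mu(n,A+R)\leq(1-\varepsilon)^{-1/p}L+\varepsilon^{-1/p}\eta .
\]
Letting first $\eta\to0$ (which is allowed for fixed $\varepsilon$, since $R\in(\mathcal L_{p,\infty})_0$) and then $\varepsilon\to0$ gives $\limsup\leq L$.

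\textbf{Lower bound.} Apply the same inequality to $A=(A+R)+(-R)$, noting $\mu(k,-R)=\mu(k,R)$. With $n=j+k$ as before, this yields $\mu(n,A)\leq\mu(j,A+R)+\mu(k,R)$. Now index by $j$, setting $n=\lceil j/(1-\varepsilon)\rceil$ and $k=n-j\approx\varepsilon j/(1-\varepsilon)$. Then
\[
 (j+1)^{1/p}\mu(j,A+R)\geq (j+1)^{1/p}\bigl[(n+1)^{-1/p}a_n-\eta(k+1)^{-1/p}\bigr],
\]
whose $\liminf$ is at least $(1-\varepsilon)^{1/p}L-\eta\bigl((1-\varepsilon)/\varepsilon\bigr)^{1/p}$. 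Sending $\eta\to0$ and then $\varepsilon\to0$ gives $\liminf\geq L$.

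The only delicate point is the order of limits: $\eta$ must be sent to zero before $\varepsilon$, because the constant $\varepsilon^{-1/p}$ blows up. This is legitimate precisely because $R$ lies in $(\mathcal L_{p,\infty}(H))_0$ and not merely in $\mathcal L_{p,\infty}(H)$. The rest is routine bookkeeping with integer parts. Note also that $A+R\in\mathcal L_{p,\infty}(H)$ automatically, since $\mathcal L_{p,\infty}$ is a quasi-Banach space.
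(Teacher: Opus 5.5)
Your proof is correct. It is the standard Ky Fan argument: split $n=j+k$ with $k\approx\varepsilon n$ in $\mu(j+k,A+B)\le\mu(j,A)+\mu(k,B)$, apply it to both $A+R$ and $A=(A+R)+(-R)$, and send $\eta\to0$ before $\varepsilon\to0$. This is what underlies the Birman--Solomyak result in Section 11.6 that the paper cites in place of a proof. Two cosmetic slips do not affect the argument: the stray factor $\varepsilon^{?}$ should be dropped, since you only use $\mu(k,R)\le\eta(k+1)^{-1/p}$ for $k\ge k_0(\eta)$, and $a_j$ need not be decreasing (only $\mu(j,A)$ is, and you never use monotonicity of $a_j$).
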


\begin{lemma}
\label{lem:weak-ideal-approximation}
Let $p>0$ and suppose that $A_n,A\in\mathcal{L}_{p,\infty}(H)$ satisfy
$\|A_n-A\|_{p,\infty}\to0$. If each limit
\[
 c_n=\lim_{j\to\infty}(j+1)^{\frac{1}{p}}\mu(j,A_n)
\]
exists, then $c_n$ converges and
\[
 \lim_{j\to\infty}(j+1)^{\frac{1}{p}}\mu(j,A)=\lim_{n\to\infty}c_n.
\]
\end{lemma}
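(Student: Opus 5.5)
The plan is to compare the singular values of $A$ and $A_n$ through the Ky Fan type inequality \eqref{eq:prelim-singular-inequalities}, $\mu(j+k,B+C)\leq\mu(j,B)+\mu(k,C)$, with the index shift $k$ taken proportional to $j$. Set $\varepsilon_n=\|A-A_n\|_{p,\infty}\to0$, and define
\[
 L^+=\limsup_{j\to\infty}(j+1)^{\frac1p}\mu(j,A),\qquad
 L^-=\liminf_{j\to\infty}(j+1)^{\frac1p}\mu(j,A).
\]
Both are finite because $A\in\mathcal L_{p,\infty}(H)$. The aim is the chain
\[
 L^+\leq\liminf_{n\to\infty} c_n\leq\limsup_{n\to\infty} c_n\leq L^-,
\]
which, together with $L^-\leq L^+$, gives both the convergence of $c_n$ and the identification of the limit.

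\emph{Upper bound for $A$.} Fix $\delta\in(0,1)$ and put $k=\lfloor\delta j\rfloor$. Writing $A=A_n+(A-A_n)$, we get
\[
 \mu(j+k,A)\leq\mu(j,A_n)+\mu(k,A-A_n)\leq\mu(j,A_n)+\varepsilon_n(k+1)^{-\frac1p}.
\]
Now multiply by $(j+k+1)^{1/p}\leq(1+\delta)^{1/p}(j+1)^{1/p}$ and use $(j+1)/(k+1)\leq\delta^{-1}$. This gives
\[
 (j+k+1)^{\frac1p}\mu(j+k,A)\leq(1+\delta)^{\frac1p}\Bigl((j+1)^{\frac1p}\mu(j,A_n)+\varepsilon_n\delta^{-\frac1p}\Bigr).
\]
The indices $j+\lfloor\delta j\rfloor$ do not exhaust $\mathbb N_0$. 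However, they have gaps of size at most $1$ in the sense that every large $m$ satisfies $j+\lfloor\delta j\rfloor\leq m\leq (j+1)+\lfloor\delta(j+1)\rfloor$ for some $j$. Monotonicity of $\mu(\cdot,A)$ together with $(m+1)/(j+k+1)\to1$ then lets us pass to the full $\limsup$, yielding
\[
 L^+\leq(1+\delta)^{\frac1p}\bigl(c_n+\varepsilon_n\delta^{-\frac1p}\bigr).
\]

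\emph{Lower bound for $A$.} Apply the same argument with the roles swapped, writing $A_n=A+(A_n-A)$. Taking $\liminf$ on the right-hand side, which is attained along the relevant subsequence using the same monotonicity interpolation, gives
\[
 c_n\leq(1+\delta)^{\frac1p}\bigl(L^-+\varepsilon_n\delta^{-\frac1p}\bigr).
\]

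\emph{Conclusion.} Let $n\to\infty$ with $\delta$ fixed, using $\varepsilon_n\to0$. This yields $L^+\leq(1+\delta)^{1/p}\liminf_n c_n$ and $\limsup_n c_n\leq(1+\delta)^{1/p}L^-$. Then let $\delta\downarrow0$ to obtain the chain above.

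The only delicate point is the bookkeeping when passing from the sparse indices $j+\lfloor\delta j\rfloor$ to all indices; monotonicity of singular values handles this at the cost of a factor tending to $1$. No quasi-triangle constant enters, since only \eqref{eq:prelim-singular-inequalities} and the definition of $\|\cdot\|_{p,\infty}$ are used.
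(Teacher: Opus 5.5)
Your proof is correct and complete up to a minor slip noted below. The paper gives no argument of its own here; it cites Birman--Solomyak \cite[Section~11.6]{BS1987}, and your proof is essentially the classical argument behind that reference: the Ky Fan inequality $\mu(j+k,B+C)\le\mu(j,B)+\mu(k,C)$ with $k$ a fixed proportion of $j$.

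The difference is in how the index split is used. Birman and Solomyak optimize the ratio $k/j$ to obtain the subadditivity $\Delta_p(T_1+T_2)^{1/(p+1)}\le\Delta_p(T_1)^{1/(p+1)}+\Delta_p(T_2)^{1/(p+1)}$, where $\Delta_p(T)=\limsup_{j}(j+1)\mu(j,T)^p$. They use this together with the mixed version for the corresponding $\liminf$ functional, and then read off the lemma. You instead keep $\delta$ fixed, let $n\to\infty$, and only then let $\delta\downarrow0$; this reaches the same conclusion without any optimization.

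Two small corrections. First, with $k=\lfloor\delta j\rfloor$ the bound $(j+1)/(k+1)\le\delta^{-1}$ fails for some $j$; for example, $\delta=0.7$, $j=4$ gives $5/3>1/0.7$. However, $(j+1)/(k+1)\to\delta^{-1}$, and you only use the bound after taking $\limsup$ or $\liminf$ in $j$, so nothing changes. Second, the lower bound needs no monotonicity interpolation. Since $c_n$ is an honest limit, it is also the limit along the indices $j+\lfloor\delta j\rfloor$, and the right-hand side already runs over all $j$.

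Your argument also gives a bit more than stated. You only use $\mu(k,A-A_n)$ for $k\to\infty$, so the proof works under the weaker hypothesis $\limsup_{k}(k+1)^{1/p}\mu(k,A-A_n)\to0$, i.e.\ convergence modulo $(\mathcal L_{p,\infty}(H))_0$. Applying that version with $A_n=A$ for every $n$ and with $A+R$ in place of $A$ also yields Lemma~\ref{lem:weak-weyl-stability}.
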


We also use the following result for selfadjoint operators.
It follows from \cite[Corollary~4.3 and Lemmas~4.4--4.5]{SZ23}.

\begin{lemma}
\label{lem:ow-signed-stability}
Suppose that $A_n=A_n^*,A=A^*\in\mathcal{L}_{1,\infty}(H)$,
$\|A_n-A\|_{1,\infty}\to0$, and
\[
 c_n^\pm=\lim_{j\to\infty}(j+1)\lambda_j^\pm(A_n)
 \longrightarrow c^\pm.
\]
Then $(j+1)\lambda_j^\pm(A)\to c^\pm$.
If $R=R^*\in(\mathcal{L}_{1,\infty}(H))_0$, then
\[
\lim_{j\to\infty}(j+1)\lambda_j^\pm(A+R)
=\lim_{j\to\infty}(j+1)\lambda_j^\pm(A),
\]
provided that the limit on the right exists.
\end{lemma}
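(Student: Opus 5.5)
The plan is a direct two-sided squeeze built on Glazman's inequality \eqref{eq:prelim-signed-inequality}. The negative case reduces to the positive one, since $\lambda_j^-(K)=\lambda_j^+(-K)$ and all hypotheses are invariant under $A\mapsto -A$, $A_n\mapsto -A_n$, $R\mapsto -R$. So I only treat $\lambda_j^+$.

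\emph{Upper bound.} Fix $\varepsilon\in(0,1)$ and $m$ large. Put $j=\lfloor(1-\varepsilon)m\rfloor$ and $k=m-j\ge\varepsilon m$. Applying \eqref{eq:prelim-signed-inequality} with $B=A_n$, $C=A-A_n$ gives
\[
 (m+1)\lambda_m^+(A)\le \frac{m+1}{j+1}\,(j+1)\lambda_j^+(A_n)+\frac{m+1}{k+1}\,(k+1)\mu(k,A-A_n).
\]
Letting $m\to\infty$ yields
\[
 \limsup_m (m+1)\lambda_m^+(A)\le \frac{c_n^+}{1-\varepsilon}+\frac{\|A-A_n\|_{1,\infty}}{\varepsilon}.
\]
Now let $n\to\infty$, so that $c_n^+\to c^+$ and the norm term vanishes. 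Then let $\varepsilon\downarrow0$, which gives $\limsup\le c^+$.

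\emph{Lower bound.} Apply \eqref{eq:prelim-signed-inequality} the other way, with $B=A$ and $C=A_n-A$, at the indices $m$ and $k=\lceil\varepsilon m\rceil$:
\[
 (m+k+1)\lambda_{m+k}^+(A_n)\le \frac{m+k+1}{m+1}(m+1)\lambda_m^+(A)+\frac{m+k+1}{k+1}(k+1)\mu(k,A_n-A).
\]
The left side tends to $c_n^+$. Taking $\liminf_m$ gives
\[
 c_n^+\le(1+\varepsilon)\liminf_m(m+1)\lambda_m^+(A)+\frac{1+\varepsilon}{\varepsilon}\|A_n-A\|_{1,\infty}.
\]
Sending $n\to\infty$ and then $\varepsilon\downarrow0$ gives $c^+\le\liminf$. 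Together with the upper bound, this shows that $(j+1)\lambda_j^\pm(A)\to c^\pm$.

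\emph{Compact perturbation.} For the second claim I run exactly the same two inequalities with $A_n$ replaced by the fixed operator $A$ and $C=\pm R$. The only change is that the error term $\frac{m+1}{k+1}(k+1)\mu(k,R)$ now tends to $0$ for each fixed $\varepsilon$, because $R\in(\mathcal L_{1,\infty}(H))_0$ and $(m+1)/(k+1)\le 2/\varepsilon$ for large $m$. One gets
\[
 \limsup_m (m+1)\lambda_m^+(A+R)\le (1-\varepsilon)^{-1}\lim_m(m+1)\lambda_m^+(A),
\]
and the symmetric inequality for the $\liminf$ follows by writing $A=(A+R)-R$. Letting $\varepsilon\downarrow0$ finishes the proof.

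The main point requiring care is the index bookkeeping. The weak quasi-norm only controls $(k+1)\mu(k,\cdot)$, so the splitting must use $k\asymp\varepsilon m$ rather than a fixed $k$; the floors and ceilings are harmless since $\lambda_j^+$ is nonincreasing. The order of limits also matters: first $m\to\infty$, then $n\to\infty$, then $\varepsilon\downarrow0$. No other ingredient beyond \eqref{eq:prelim-signed-inequality} is needed.
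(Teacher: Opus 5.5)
Your proof is correct. The paper gives no argument for this lemma and only cites \cite[Corollary~4.3 and Lemmas~4.4--4.5]{SZ23}, so yours is a genuinely self-contained replacement.

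You run the standard Birman--Solomyak $\varepsilon$-splitting of indices and use nothing beyond the signed Weyl (Glazman) inequality \eqref{eq:prelim-signed-inequality}, which the paper already records in the preliminaries. The bookkeeping checks out:
\begin{enumerate}
\item $j=\lfloor(1-\varepsilon)m\rfloor$ and $k=m-j\ge\varepsilon m$ in the upper bound;
\item $k=\lceil\varepsilon m\rceil$ in the lower bound;
\item the order of limits is $m\to\infty$, then $n\to\infty$, then $\varepsilon\downarrow0$;
\item the bound $(m+1)/(k+1)\le 1/\varepsilon$, which is what makes the error term vanish.
\end{enumerate}

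You also correctly saw that the second assertion is not a special case of the first, since the constant sequence $A$ does not converge to $A+R$ in $\mathcal L_{1,\infty}$. Rerunning the same two inequalities with an $o(1)$ error term, coming from $R\in(\mathcal L_{1,\infty}(H))_0$, handles it cleanly. An alternative is to apply the first part to $A+R_n\to A+R$ with selfadjoint finite-rank $R_n$, noting that a rank-$r$ perturbation shifts indices by at most $r$.

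The citation buys brevity and access to the more general framework of \cite{SZ23}. Your route shows that the signed statement is exactly as elementary as Lemmas~\ref{lem:weak-weyl-stability} and~\ref{lem:weak-ideal-approximation}: the same argument with \eqref{eq:prelim-singular-inequalities} in place of \eqref{eq:prelim-signed-inequality} proves those as well. It would let the paper state the lemma with a short proof rather than a pointer to three separate results.
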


\subsection{Quantum tori $\mathbb{T}_\theta^d$}
\label{subsec:quantum-tori}

Let $d\geq2$ and let $\theta$ be a real antisymmetric $d\times d$
matrix. The quantum torus $C(\mathbb T_\theta^d)$ is the universal
unital $C^*$-algebra generated by unitaries $U_1,\ldots,U_d$ satisfying
\[
 U_jU_k=e^{\mathrm i\theta_{jk}}U_kU_j.
\]
See \cite[Section~2.2]{CXY13}. Write $U^n=U_1^{n_1}\cdots U_d^{n_d}$ for $n\in\mathbb Z^d$.
The canonical faithful tracial state is determined by
$\tau(U^n)=\delta_{n,0}$. We represent $C(\mathbb T_\theta^d)$ on
$H=L_2(\mathbb T_\theta^d,\tau)$ by left multiplication, denoted by
$\lambda$. The weak operator closure of $\lambda(C(\mathbb T_\theta^d))$
is $L_\infty(\mathbb T_\theta^d)$, and the trace $\tau$ extends to a normal faithful tracial state on this algebra.
The spaces $L_p(\mathbb T_\theta^d)$ and $L\log L(\mathbb T_\theta^d)$
are defined with respect to this state. We use the inner product
$\langle\xi,\eta\rangle_H=\tau(\eta^*\xi)$.

For $x\in L_1(\mathbb T_\theta^d)$, set
\[
 \widehat x(n)=\tau\bigl(x(U^n)^*\bigr).
\]
The monomials $U^n$ form an orthonormal basis of $H$.
Thus $x=\sum_{n\in\mathbb Z^d}\widehat x(n)U^n$ in $H$ for $x\in H$.
Parseval's identity gives
\begin{equation}
\label{eq:prelim-parseval}
 \|x\|_2^2=\sum_{n\in\mathbb Z^d}|\widehat x(n)|^2,
 \qquad x\in H.
\end{equation}

The canonical torus action, written using its $2\pi$-periodic lift, is
\[
 \alpha_t(U^n)=e^{\mathrm i t\cdot n}U^n,
 \qquad t\in\mathbb R^d.
\]
It extends to a trace-preserving action on
$L_\infty(\mathbb T_\theta^d)$ and to isometries on each
$L_p(\mathbb T_\theta^d)$; see
\cite[Section~2.3 and Proposition~2.1]{CXY13}.

We denote by $\mathcal S(\mathbb T_\theta^d)$ the algebra of
elements with rapidly decreasing Fourier coefficients.
Equivalently, an element $x\in C(\mathbb T_\theta^d)$ belongs to
$\mathcal S(\mathbb T_\theta^d)$ if and only if
$t\mapsto\alpha_t(x)$ is infinitely differentiable
in the operator norm; see \cite[Section~2.1]{XXY18}.
We denote by $\mathcal D'(\mathbb T_\theta^d)$ the space of
distributional Fourier series, whose coefficients have
at most polynomial growth.

The selfadjoint Fourier operators $D_j$, taken on their maximal domains
in $H$, and the Laplacian are given by
\[
 D_jU^n=n_jU^n,\qquad
 \partial_j=\mathrm iD_j,\qquad
 \Delta=\sum_{j=1}^d\partial_j^2=-\sum_{j=1}^dD_j^2.
\]
On smooth elements, $D_j$ satisfies the Leibniz rule. We use
$D^\beta=D_1^{\beta_1}\cdots D_d^{\beta_d}$ and
$|\beta|=\beta_1+\cdots+\beta_d$. The seminorms
\begin{equation}
\label{eq:prelim-smooth-seminorms}
 p_N(x)=\max_{|\beta|\leq N}\|D^\beta x\|_\infty,
 \qquad N\in\mathbb N_0,
\end{equation}
define the smooth topology on $\mathcal S(\mathbb T_\theta^d)$.
For $u\in\mathcal S(\mathbb T_\theta^d)$, put
$u_M=\sum_{|n|\leq M}\widehat u(n)U^n$.
Rapid decay of the Fourier coefficients gives
\begin{equation}
\label{eq:prelim-smooth-fourier-density}
 p_N(u-u_M)
 \leq\sum_{|n|>M}(1+|n|)^N|\widehat u(n)|
 \longrightarrow0
 \qquad(N\in\mathbb N_0).
\end{equation}
Thus Fourier polynomials are dense in this topology.
For $1<p<\infty$,
\[
 \mathcal{S}(\mathbb T_\theta^d)\subset C(\mathbb T_\theta^d)
 \subset L_\infty(\mathbb T_\theta^d)\subset L_p(\mathbb T_\theta^d)\subset L\log L(\mathbb T_\theta^d)
 \subset L_1(\mathbb T_\theta^d),
\]
with continuous inclusions.

For $\alpha\in\mathbb R$, as in \cite[Section~2.2]{XXY18}, we define the powers of $1-\Delta$ by
\[
 (1-\Delta)^{\frac{\alpha}{2}}U^n=(1+|n|^2)^{\frac{\alpha}{2}}U^n,
\]
and set
\begin{equation}
\label{eq:prelim-sobolev}
 \begin{gathered}
 W_2^\alpha(\mathbb T_\theta^d)
 =\{u\in\mathcal D'(\mathbb T_\theta^d):(1-\Delta)^{\frac{\alpha}{2}}u\in H\},\\
 \|u\|_{W_2^\alpha}
 =\bigg(\sum_n(1+|n|^2)^\alpha|\widehat u(n)|^2\bigg)^{\frac 12}.
 \end{gathered}
\end{equation}
By \eqref{eq:prelim-sobolev},
$(1-\Delta)^{-\alpha/2}:H\to W_2^\alpha$ is an isometric isomorphism.
Fourier polynomials are dense in $W_2^\alpha$.
For $\alpha>\beta$, the embedding $W_2^\alpha\hookrightarrow W_2^\beta$
is compact, since the diagonal weights
$(1+|n|^2)^{\frac{\beta-\alpha}{2}}$ tend to zero.

\subsection{A $C^*$-algebra of pseudodifferential operators.}
\label{subsec:principal-symbol-algebra}
In this subsection, let $d\geq2$.
We use the $C^*$-algebraic principal symbol construction of
\cite{SZ18,MSZ19}, as in \cite{SXZ23}.
For $g\in C(\mathbb S^{d-1})$, define
\[
 \pi(g)U^n=g\Bigl(\frac{n}{|n|}\Bigr)U^n,
 \quad n\neq0,\qquad \pi(g)1=0.
\]
This is a faithful $*$-representation. It satisfies
$\pi(1)=I-P_0$, where $P_0$ is the orthogonal projection onto
the constant vectors. Let
\[
 \Pi=C^*\bigl(\lambda(C(\mathbb T_\theta^d)),
               \pi(C(\mathbb S^{d-1}))\bigr)\subset\mathcal B(H).
\]
By \cite[Theorem~3.3 and Lemma~4.5]{MSZ19}, there is a
surjective $*$-homomorphism
\begin{equation*}
 \operatorname{sym}:\Pi\longrightarrow
 C(\mathbb T_\theta^d)\otimes_{\min}C(\mathbb S^{d-1}),\qquad
 \operatorname{sym}(\lambda(y)\pi(g))(s)=y\,g(s),
\end{equation*}
We identify the target algebra with
$C(\mathbb S^{d-1};C(\mathbb T_\theta^d))$.
The kernel is $\Pi\cap\mathcal K(H)$.
The map is contractive and preserves continuous functional calculus.
Since $P_0=I-\pi(1)\in\Pi$, the Fourier matrix units
$\lambda(U^n)P_0\lambda(U^m)^*$ belong to $\Pi$.
Thus $\mathcal K(H)\subset\Pi$.
Changing the value at zero frequency adds an operator of rank at most one,
so it does not change the principal symbol.
For $A=A^*\in\Pi$,
\[
 A_\pm\in\Pi,\qquad
 \operatorname{sym}(A_\pm)(s)=\operatorname{sym}(A)(s)_\pm.
\]

For $m>0$, lattice counting gives
\[
 (1-\Delta)^{-\frac m2}\in\mathcal L_{\frac dm,\infty}(H).
\]
For $A\in\Pi$, \eqref{eq:prelim-ideal-property} implies
\[
 \|A(1-\Delta)^{-\frac m2}\|_{\frac dm,\infty}
 \leq\|A\|\,\|(1-\Delta)^{-\frac m2}\|_{\frac dm,\infty}.
\]
Indeed, for $0<s<1$,
\[
 d_{(1-\Delta)^{-m/2}}(s)
 =\#\{n\in\mathbb Z^d:(1+|n|^2)^{-\frac{m}{2}}>s\}
 \leq C_d s^{-\frac{d}{m}},
\]
and the counting function vanishes for $s\geq1$.

\subsection{Classical pseudodifferential operators on $\mathbb{T}_\theta^d$}
\label{subsec:classical-pseudodifferential}

We use the classical calculus on quantum tori in the normalization
$D_jU^n=n_jU^n$; see \cite{HLP19i,HLP19ii}.
For $m\in\mathbb R$, the symbol class
$S^m(\mathbb R^d;\mathcal{S}(\mathbb T_\theta^d))$ consists of smooth maps
$\sigma$ satisfying
\[
 \|D^\alpha\partial_\xi^\beta\sigma(\xi)\|_\infty
 \leq C_{\alpha,\beta}(1+|\xi|^2)^{\frac{m-|\beta|}{2}}.
\]
See \cite[Definition~3.1]{HLP19i}.
For $\sigma\in S^m(\mathbb R^d;\mathcal{S}(\mathbb T_\theta^d))$, the associated pseudodifferential operator
$P_\sigma$ acts on $\mathcal S(\mathbb T_\theta^d)$ by
\[
 P_\sigma u
 =\sum_{n\in\mathbb Z^d}
   \sigma(n)\widehat u(n)U^n;
\]
see \cite[Proposition~5.9]{HLP19i}.
We denote the class of pseudodifferential operators of order $m$
by $\Psi^m(\mathbb T_\theta^d)$.

\begin{definition}[{\cite[Definitions~3.11, 3.14 and Remark~3.21]{HLP19i}}]
\label{def:classical-pdo-quantum-torus}
A symbol $\sigma\in S^m(\mathbb R^d;\mathcal S(\mathbb T_\theta^d))$ is classical if there are smooth components
$\sigma_{m-j}$ on $\mathbb R^d\setminus\{0\}$, with values in
$\mathcal{S}(\mathbb T_\theta^d)$, such that
\[
 \sigma_{m-j}(t\xi)=t^{m-j}\sigma_{m-j}(\xi),\qquad
 \sigma-\sum_{j=0}^{N-1}\chi\sigma_{m-j}\in S^{m-N}
 \quad(t>0,\ N\geq1).
\]
Here $\chi$ is smooth, vanishes near zero, and equals one outside a
ball. We denote this class of symbols by
$S_{\mathrm{cl}}^m(\mathbb R^d;\mathcal S(\mathbb T_\theta^d))$.
The operators $P_\sigma$ with $\sigma\in S_{\mathrm{cl}}^m(\mathbb R^d;\mathcal S(\mathbb T_\theta^d))$
are called classical pseudodifferential operators of order $m$.
We denote their class by $\Psi_{\mathrm{cl}}^m(\mathbb T_\theta^d)$.
Their principal symbol is $\sigma(P_\sigma)_m=\sigma_m$.
It depends only on $P_\sigma$; see
\cite[Corollary~5.13(ii)]{HLP19i}.
\end{definition}

Order-zero operators are bounded on $H$ by
\cite[Proposition~10.1]{HLP19ii}.
Each $P_\sigma$ acts continuously on $\mathcal S(\mathbb T_\theta^d)$;
see \cite[Proposition~5.4]{HLP19i}.
For $\sigma\in S^m(\mathbb R^d;\mathcal{S}(\mathbb T_\theta^d))$, \cite[Proposition~10.4]{HLP19ii} gives
\begin{equation}\label{eq:prelim-pdo-sobolev-mapping}
 \|P_\sigma u\|_{W_2^s}
 \leq C_{\sigma,s}\|u\|_{W_2^{s+m}},
 \qquad u\in W_2^{s+m}(\mathbb T_\theta^d),\quad s\in\mathbb R.
\end{equation}
The composition and formal adjoint formulas
\cite[Propositions~7.6 and~8.3]{HLP19ii} give
\[
 \begin{aligned}
 \sigma(PQ)_{m+m'}&=\sigma(P)_m\sigma(Q)_{m'},\qquad
 &&P\in\mathrm \Psi_{\mathrm{cl}}^m(\mathbb T_\theta^d),\ Q\in\mathrm \Psi_{\mathrm{cl}}^{m'}(\mathbb T_\theta^d),\\
 \sigma(P^*)_m&=\sigma(P)_m^*.
 \end{aligned}
\]
Moreover, $(1-\Delta)^{m/2}\in\mathrm \Psi_{\mathrm{cl}}^m(\mathbb T_\theta^d)$ has principal symbol
$|\xi|^m$ by \cite[Proposition~5.14]{HLP19i}. For $d\geq2$, \cite[Proposition~5.10]{Z26} gives
\[
 \mathrm \Psi_{\mathrm{cl}}^0(\mathbb T_\theta^d)\subset\Pi,\qquad
 \operatorname{sym}(P)(s)=\sigma(P)_0(s)
 \quad(P\in\mathrm \Psi_{\mathrm{cl}}^0(\mathbb T_\theta^d)).
\]
Consequently, for $d\geq2$, if $m>0$ and
$P\in\mathrm\Psi_{\mathrm{cl}}^{-m}(\mathbb T_\theta^d)$, the operators
$P(1-\Delta)^{m/2}$ and $(1-\Delta)^{m/2}P$, initially defined on smooth elements,
have bounded extensions in $\Pi$, and
\begin{equation}
\label{eq:prelim-classical-factorization}
 \begin{gathered}
 P=\bigl(P(1-\Delta)^{\frac{m}{2}}\bigr)(1-\Delta)^{-\frac{m}{2}}
  =(1-\Delta)^{-\frac{m}{2}}\bigl((1-\Delta)^{\frac{m}{2}}P\bigr),\\
 \operatorname{sym}\bigl(P(1-\Delta)^{\frac{m}{2}}\bigr)(s)
 =\operatorname{sym}\bigl((1-\Delta)^{\frac{m}{2}}P\bigr)(s)=\sigma(P)_{-m}(s).
 \end{gathered}
\end{equation}

\section{An abstract Cwikel estimate at the \texorpdfstring{$L\log L$}{L log L} endpoint}
\label{sec:cwikel-estimates}

We establish an endpoint estimate for the product of a left multiplication operator and a diagonal operator.
The argument applies to finite von Neumann
algebras whose $L_2$ space admits an orthonormal basis of
unitaries. In particular, the Fourier basis
$\{U^n:n\in\mathbb Z^d\}$ of $L_2(\mathbb T_\theta^d)$ has this property.

Let $\mathcal M$ be a finite von Neumann algebra equipped with a normal
faithful tracial state $\tau$, and put $H=L_2(\mathcal M,\tau)$, with
$\langle\xi,\eta\rangle_H=\tau(\eta^*\xi)$.
For $a\in L_0(\mathcal M)$, denote by $\lambda(a)$ the closed operator
of left multiplication on its natural domain,
\[
 \operatorname{Dom}\lambda(a)=\{\xi\in H:a\xi\in H\},
 \qquad \lambda(a)\xi=a\xi.
\]
For bounded $a$, $a\mapsto \lambda(a)$ is the left-regular representation.
We write $\operatorname{Tr}_H$ for the usual operator trace on
$\mathcal B(H)$ and use the notation
\[
 \mathcal{L}_{2,\infty}(H)
   =L_{2,\infty}(\mathcal B(H),\operatorname{Tr}_H),
 \qquad
 \mathcal{L}_2(H)=L_2(\mathcal B(H),\operatorname{Tr}_H).
\]
Thus, for a compact operator $T$ on $H$, the distribution function and
weak Schatten quasi-norm are
\begin{equation}
\label{eq:cwikel-counting-norm}
 d_T(s)=\operatorname{Tr}_H\bigl(\mathbf1_{(s,\infty)}(|T|)\bigr),
 \qquad
 \|T\|_{\mathcal{L}_{2,\infty}(H)}^2
   =\sup_{s>0}s^2d_T(s).
\end{equation}
We use the Luxemburg norm on $L\log L(\mathcal M)$ associated with
$\Phi(t)=t\log(e+t)$, as in \eqref{eq:prelim-luxemburg}.
Write $\log_+t=\max\{\log t,0\}$ for $t>0$ and $\log_+0=0$.

\par Suppose that $H=L_2(\M)$ has an (at most) countable orthonormal basis
$\{u_j\}_{j\geq1}$ consisting of unitaries in $\mathcal M$. 
Let $G$ be the diagonal operator on $H$ given by $Gu_j=g_ju_j$. Write $g=\{g_j\}_{j\geq1}$. 
For a finite or countable sequence $g$, let $\{g_k^*\}_{k\geq1}$ be the decreasing rearrangement of $\{|g_j|\}$, with zeros appended when only finitely many entries are nonzero. We write
\begin{equation*}
  \Norm{g}_{\ell_{2,\infty}}=\sup_{k\geq1}k^{1/2}g_k^*. 
\end{equation*}

\begin{theorem}
\label{thm:abstract-endpoint}
Let $\M$, $H$, $G$ be as above.
If $f\in L_0(\mathcal M)_+$ satisfies $f^2\in L\log L(\mathcal M)$, and $g=\{g_j\}_{j\geq1}\in\ell_{2,\infty}$, then
$\lambda(f)G$, initially defined on the finite linear span of the $u_j$,
has a unique bounded extension to $H$. This extension is compact,
coincides with the natural multiplication product on all of $H$, and satisfies
\begin{equation}
\label{eq:abstract-endpoint}
 \|\lambda(f)G\|_{\mathcal{L}_{2,\infty}(H)}^2
   \le12\|g\|_{\ell_{2,\infty}}^2\|f^2\|_{L\log L(\mathcal M)}.
\end{equation}
\end{theorem}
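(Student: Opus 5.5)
We prove the estimate by a level-set decomposition of both $f$ and $g$, followed by a three-way split of $T=\lambda(f)G$ into a finite-rank piece, a piece of small operator norm, and a Hilbert--Schmidt piece. Then we count singular values with \eqref{eq:prelim-singular-inequalities}.

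\emph{Reductions.} By homogeneity we may assume $\|g\|_{\ell_{2,\infty}}=1$ and $\|f^2\|_{L\log L}=1$, so $\tau(f^2\log(e+f^2))\le1$. Since $\lambda(u_j)$ commutes with nothing in particular but is unitary, replacing $g_j$ by $|g_j|$ changes nothing. The key identity, and the only place where unitarity of the basis enters, is
\[
 \|\lambda(h)u_j\|_2^2=\tau(u_j^*h^*hu_j)=\tau(|h|^2),
\]
valid for every $h\in L_2(\mathcal M)$. Hence, for any set $J$ of indices,
\[
 \|\lambda(h)GP_J\|_{\mathcal L_2}^2=\tau(|h|^2)\sum_{j\in J}g_j^2 ,
\]
where $P_J$ is the projection onto $\overline{\operatorname{span}}\{u_j:j\in J\}$. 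Applied to truncations $h=f\mathbf 1_{[0,N]}(f)$, this also gives boundedness on the span of the $u_j$, the compactness of the extension, and its agreement with the closed product $\lambda(f)G$. The agreement follows by a monotone limit $N\to\infty$ together with the closedness of $\lambda(f)$; we do this at the end.

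\emph{Splitting at a fixed level $s>0$.} We must show $d_T(s)\lesssim s^{-2}$, with the final constant $12$ obtained by tracking constants. Write $f=\sum_m f_m$ with $f_m=f\mathbf 1_{(2^m,2^{m+1}]}(f)$ (together with a bounded part $f\le1$), and group the indices into dyadic levels $J_l=\{j:2^{-l-1}<g_j\le2^{-l}\}$. The weak-$\ell_2$ assumption gives $\#J_l\le4^{l+1}$, hence $\sum_{j\in J_l}g_j^2\le4$. Choose $l_0$ with $2^{-l_0}\approx s$. Then $T$ splits as follows.
\begin{itemize}
\item[(i)] $T_1=\lambda(f)GP_{\{g_j>s/2\}}$. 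Its rank is at most $\#\{j:g_j>s/2\}\le4s^{-2}$.
\item[(ii)] $T_2$ collects the pairs $(m,l)$ with $l\ge l_0$ and $2^{m-l}\le cs$. It consists of the parts of $f$ bounded relative to the $g$-level.
\item[(iii)] $T_3$ collects the pairs with $l\ge l_0$ and $2^{m-l}>cs$.
\end{itemize}
For $T_3$, the key identity and the orthogonality of the $P_{J_l}$ on the input side give
\[
 \|T_3\|_{\mathcal L_2}^2
 \le\sum_l\Big\|\lambda\Big(\sum_{m:\,2^m>cs2^l}f_m\Big)GP_{J_l}\Big\|_{\mathcal L_2}^2
 \le4\sum_m\tau(f_m^2)\,\#\{l\ge l_0:\ 2^l<2^m/(cs)\}.
\]
The count is at most $\log_2^+\!\big(2^m/(cs\,2^{l_0})\big)\approx\log_+(2^{2m}/c')$. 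This is the truncated harmonic sum: $\sum_{k<j\le K}1/j\approx\log(K/k)$. It turns the right-hand side into a multiple of $\tau(f^2\log(e+f^2))\le1$, uniformly in $s$. Chebyshev's inequality then gives $d_{T_3}(s/2)\le 4s^{-2}\|T_3\|_{\mathcal L_2}^2\lesssim s^{-2}$. If the scaling is chosen so that $\|T_2\|\le s/2$, then $d_T(s)\le\operatorname{rank}T_1+d_{T_3}(s/2)\lesssim s^{-2}$, which is \eqref{eq:abstract-endpoint} up to constants.

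\emph{Main obstacle.} The hard step is the operator-norm bound $\|T_2\|\le s/2$. Each individual block $\lambda(f_m)GP_{J_l}$ has norm at most $2^{m+1-l}$. The outputs of different blocks, however, are not orthogonal, so one cannot simply take a maximum over blocks. We would follow the multiplicative convolution argument of \cite{HKRV23}. For $\xi=\sum_l\xi_l$ with $\xi_l\in P_{J_l}H$, estimate
\[
 \Big\|\sum_{l}\lambda(F_l)G\xi_l\Big\|_2,\qquad F_l=\sum_{m:\,2^{m-l}\le cs}f_m .
\]
Here we use that the $f_m$ have disjoint spectral supports. Then
\[
 \|T_2\xi\|_2^2=\sum_m\Big\|f_m\sum_{l:\,l\ge m-\log_2(cs)}G\xi_l\Big\|_2^2,
\]
and we apply Cauchy--Schwarz in $l$ with geometric weights $2^{(m-l)/2}$. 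This produces a Schur-type bound by a convolution of the geometric sequence $(2^{-k/2})_{k\ge0}$ against $\|\xi_l\|$, giving $\|T_2\|\lesssim cs$. Fixing $c$ small then yields $\|T_2\|\le s/2$. If the $\ell_2$ sum over $l$ does not close this way, the fallback is to shift the threshold for $T_3$ by an extra $\log$ factor, which $L\log L$ still absorbs. The remaining work is to optimize these dyadic choices to reach the explicit constant $12$.
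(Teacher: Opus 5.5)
\textbf{Verdict: the route is sound, but you prove the estimate only with some absolute constant, not with the stated $12$.}

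\textbf{Comparison with the paper.} Your architecture matches the paper's. Your $T_1$, $T_2$, $T_3$ play the roles of $F_s=\lambda(f)GP_s$, the bounded cutoff $Q_s$, and the Hilbert--Schmidt remainder $R_s$. In both arguments the logarithm comes from a truncated harmonic sum that is uniform in $s$.

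The real difference is the bounded cutoff. The paper uses no sharp cutoff. It sets $Q_su_j=m_s(g_jf)u_j$ for $j>N_s$, with the soft profile $m_s(t)=\min\{t,s^2/t\}$. It writes $m_s=\varphi_s*\psi$ on $((0,\infty),dt/t)$, where $\|\varphi_s\|_{L_2(dt/t)}=s$ and $\|\psi\|_{L_2(dt/t)}=1$. Cauchy--Schwarz in $r$, applied to $Q_s=\int_0^\infty\lambda(\varphi_s(f/r))\,\psi(rG(I-P_s))\,\frac{dr}{r}$, gives $\|Q_s\|\le s$. The remainder $r_s(t)=t-m_s(t)$ then satisfies $r_s(t)^2\le t^2\mathbf 1_{(s,\infty)}(t)$.

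Your $T_2$ is a discretized, sharp-cutoff version of the same mechanism, and it does close, more simply than you planned. The ranges of the $\lambda(f_m)$, $m\in\mathbb Z$, are mutually orthogonal, since they come from disjoint spectral projections of $f$. The vectors $G\xi_l$ are mutually orthogonal too. Put $L_m=\max\{l_0,\,m-\log_2(cs)\}$; interchanging the sums and summing the geometric series in $m$ gives
\[
 \|T_2\xi\|_2^2=\sum_m\Big\|\lambda(f_m)\sum_{l\ge L_m}G\xi_l\Big\|_2^2
 \le\sum_m4^{m+1}\sum_{l\ge L_m}4^{-l}\|\xi_l\|_2^2
 \le\frac{16}{3}(cs)^2\|\xi\|_2^2 .
\]
So no Cauchy--Schwarz in $l$ is needed, and your fallback is unnecessary.

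What each route buys: yours is more elementary, with no operator-valued weak integral and no dominated-convergence check of how $Q_s$ acts on the $u_j$. It also shows exactly where input and output orthogonality enter. The paper's soft profile buys the exact bound $\|Q_s\|\le s$ and a remainder matched exactly to the harmonic tail.

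\textbf{The constant.} That exactness is what yields the explicit constant $12$, which you do not reach. Your bookkeeping loses at several points:
\begin{itemize}
\item $\operatorname{rank}T_1$ can be as large as $4s^{-2}$;
\item $\sum_{j\in J_l}g_j^2\le4$ costs a factor $4$;
\item Chebyshev at level $s/2$ costs another factor $4$;
\item the smallness of $c$ needed for $\|T_2\|\le s/2$ (with the bound above, $c\le\sqrt3/8$) adds $\log_2(1/c)$ to the $T_3$ count.
\end{itemize}
With these losses the bound comes out larger than $12$. To get $12$ you need the paper's accounting:
\begin{itemize}
\item cut at $N_s=\lfloor s^{-2}\rfloor$;
\item use the individual bounds $g_j\le j^{-1/2}$ in $\sum_{N_s<j<t^2/s^2}1/j\le1+\log_+(2t^2)$ instead of dyadic blocks;
\item combine $t(1+\log_+(2t))\le2\Phi(t)$ with $\|Q_s\|\le s$ to get $d_{\lambda(f)G}(2s)\le N_s+2s^{-2}\le3s^{-2}$.
\end{itemize}

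\textbf{Two smaller corrections.}
\begin{itemize}
\item When $g\notin\ell_2$, the identity $\|\lambda(h)GP_J\|_{\mathcal L_2}^2=\tau(|h|^2)\sum_{j\in J}g_j^2$ is infinite for large $J$. It therefore gives neither boundedness nor compactness of the extension. Both come from the decomposition: $\|T-(T_1+T_3)\|\le\|T_2\|\le s/2$, $T_1+T_3$ is compact, and $s$ is arbitrary. Closedness of $\lambda(f)$ then identifies the extension with the natural product.
\item The reduction to $g_j\ge0$ is the factorization $G=|G|V$ with $V$ a diagonal unitary acting on the right. It is not a property of $\lambda(u_j)$.
\end{itemize}
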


\begin{proof}
The cases $f=0$ or $g=0$ are immediate. Otherwise, put
$a=\|f^2\|_{L\log L(\mathcal M)}^{1/2}$ and
$b=\|g\|_{\ell_{2,\infty}}$, and write $G=|G|V$, where
$Vu_j=\omega_ju_j$, with $\omega_j=g_j/|g_j|$ if $g_j\ne0$ and
$\omega_j=1$ otherwise. Then $V$ is unitary. Setting
$\tilde f=f/a$ and $\tilde G=|G|/b$, we have, on the natural domains,
\[
 \lambda(f)G=ab\bigl(\lambda(\tilde f)\tilde G\bigr)V.
\]
It suffices to prove the theorem for $\tilde{f}$ and $\tilde{G}$.
The squared weak Schatten quasi-norm then scales by $a^2b^2$.
We may therefore assume
\begin{equation}
\label{eq:cwikel-normalization}
 \|f^2\|_{L\log L(\mathcal M)}=1,\qquad \|g\|_{\ell_{2,\infty}}=1,
 \qquad G\ge0.
\end{equation}

We consider only the basis vectors $u_j$ whose coefficients $g_j$ are nonzero. 
We use $\{u_j\}_{j\in J}$ to denote this enumeration for those $u_j$ with $g_j\neq0$. 
This index set $J$ may be $\{1,\dots, n\}$ or $\{j\in\mathbb{N}:j\geq1\}$.
After relabeling these basis vectors (in nonincreasing order of $g_j$), we may assume
\begin{equation}
  \label{eq:cwikel-g_j<j^(1/2)}
  0<g_j=g_j^*\le j^{-\frac{1}{2}}, \qquad j\in J.
\end{equation}
In the rest of the proof, all indices $j$ range over $J$, with $\{g_j\}_{j\in J}$ satisfying \eqref{eq:cwikel-g_j<j^(1/2)}. 
The original basis vectors with zero diagonal coefficients form an orthonormal basis of $\ker G$. 

By \eqref{eq:cwikel-normalization} and the definition of the Luxemburg norm, 
we have $\tau(\Phi(f^2))\le1$. Since $\Phi(t)\ge t$, 
we have $\tau(f^2)\leq 1$.
Write $\mathcal P$ for the finite linear span of the full original basis. 
Since $f\in L_2(\mathcal M)$ and each $u_j$ is unitary, traciality gives
$\|fu_j\|_H^2=\tau(f^2)\le1$. 
Hence $\lambda(f)G$ is defined on $\mathcal P$. 

\par Fix $s>0$, set $N_s=\lfloor s^{-2}\rfloor$, and let $P_s$ be the
orthogonal projection onto the span of the basis vectors $\{u_j:j\in J, j\leq N_s\}$.
When $N_s=0$, put $P_s=0$. The operator $F_s=\lambda(f)GP_s$ is bounded and has rank at
most $N_s$. We shall decompose the remaining term $\lambda(f)G(I-P_s)$ into a bounded
cutoff of norm at most $s$ and a Hilbert--Schmidt remainder.
\par To construct the bounded cutoff, we adapt the multiplicative
convolution argument in \cite[Theorem~2.1 and Remark~1.4]{HKRV23}.
For $t>0$, define 
\[
 \varphi_s(t)=\sqrt2\,t\mathbf1_{(0,s]}(t),
 \qquad
 \psi(t)=\sqrt2\,t^{-1}\mathbf1_{[1,\infty)}(t),
\]
and set both functions equal to zero at $t=0$. 
On the locally compact abelian group $\RR_+=(0,\infty)$ equipped with the Haar measure $dt/t$, we let 
\begin{equation*}
  m_s(t)=(\varphi_s*\psi)(t). 
\end{equation*}
Direct integration gives
\begin{equation}
\label{eq:cwikel-cutoff-profile}
 m_s(t):=\int_0^\infty\varphi_s\Bigl(\frac{t}{r}\Bigr)\psi(r)\,\frac{dr}{r}
   =\min\Bigl\{t,\frac{s^2}{t}\Bigr\} \qquad (t>0), 
\end{equation}
with $\|\varphi_s\|_{L_2(dr/r)}=s$ and
$\|\psi\|_{L_2(dr/r)}=1$. 
Also set $m_s(0)=0$. 
Let $I$ denote the identity operator on $H$, and put
\[
 X_r=\lambda\Bigl(\varphi_s\Bigl(\frac{f}{r}\Bigr)\Bigr),
 \qquad Y_r=\psi\bigl(rG(I-P_s)\bigr),\qquad r>0.
\]
Then both factors are bounded. Indeed, spectral calculus gives the following strong integrals of positive operators: 
\[
 \int_0^\infty X_rX_r^*\,\frac{dr}{r}
   =s^2\lambda\bigl(\mathbf1_{(0,\infty)}(f)\bigr)\le s^2I,
 \qquad
 \int_0^\infty Y_r^*Y_r\,\frac{dr}{r}\le I.
\]
Consequently, the Cauchy--Schwarz inequality gives
\[
 \int_0^\infty
   |\langle X_rY_r\xi,\eta\rangle_H|\,\frac{dr}{r}
   \le s\|\xi\|_H\|\eta\|_H,
 \qquad \xi,\eta\in H.
\]
The weak operator integral
\begin{equation*}
 Q_s=\int_0^\infty X_rY_r\,\frac{dr}{r}
\end{equation*}
therefore defines a bounded operator with $\|Q_s\|\le s$.
Both $F_s$ and $Q_s$ vanish on $\ker G$.
On each relabeled basis vector, \eqref{eq:cwikel-cutoff-profile} gives 
\begin{equation}
  \label{eq:Q_s def on u_j}
  Q_su_j=m_s(g_jf)u_j \quad (j>N_s); \qquad Q_su_j=0\quad (j\leq N_s).
\end{equation}
Indeed, by \eqref{eq:cwikel-cutoff-profile} and a change of variables, we have 
\begin{equation*}
  m_s(g_jt)=\int_0^\infty\varphi_s\Bigl(\frac{g_j}{r}\Bigr)\psi(rt)\frac{dr}{r}
  =\int_0^\infty\varphi_s\Bigl(\frac{t}{r}\Bigr)\psi(rg_j)\frac{dr}{r}.
\end{equation*}
The scalar integrals over finite intervals increase to
$m_s(g_jt)\le s$, so \eqref{eq:Q_s def on u_j} follows by dominated convergence
in the spectral calculus of $f$.

Set $r_s(t)=t-m_s(t)$ for $t\ge0$. Define the operator $R_s$ by 
\begin{equation}
  \label{eq:R_s def on u_j}
  R_su_j=r_s(g_jf)u_j \quad (j>N_s); \qquad R_su_j=0 \quad (j\leq N_s). 
\end{equation}
Set $R_s=0$ on $\ker G$.
We will show that $R_s$ extends to a Hilbert--Schmidt operator. Since
\[
 0\le r_s(t)^2\le t^2\mathbf1_{(s,\infty)}(t),
\]
the squared remainder $r_s(g_jf)^2$ is bounded above by
$g_j^2f^2\mathbf1_{(s,\infty)}(g_jf)$. Since $N_s=\lfloor s^{-2} \rfloor$ and the $g_j$ satisfy
\eqref{eq:cwikel-g_j<j^(1/2)}, we have the elementary bound
\begin{equation}
\label{eq:cwikel-harmonic-tail}
 \sum_{j>N_s}g_j^2\mathbf1_{\{g_jt>s\}}
   \le\sum_{N_s<j<\frac{t^2}{s^2}}\frac{1}{j}
   \le1+\log_+(2t^2),\qquad t\ge0.
\end{equation}
To see the last inequality, we may assume that the sum is nonempty.
If $s\le1$, then $N_s\ge s^{-2}/2$ and $N_s\ge1$;
integral comparison bounds the sum by
$\log(t^2/(s^2N_s))\le\log_+(2t^2)$.
If $s>1$, then $N_s=0$, and the same comparison after separating the
first term gives $1+\log_+(t^2/s^2)\le1+\log_+(2t^2)$.
By spectral calculus, \eqref{eq:cwikel-harmonic-tail} implies
\begin{equation}
\label{eq:cwikel-remainder-bound}
 \begin{aligned}
 \sum_{j>N_s}\|r_s(g_jf)u_j\|_H^2
 &=\sum_{j>N_s}\tau\bigl(r_s(g_jf)^2\bigr)\\
 &\le\tau\Bigg(f^2\sum_{j>N_s}g_j^2
                       \mathbf1_{(s,\infty)}(g_jf)\Bigg)\\
 &\le\tau\bigl(f^2(1+\log_+(2f^2))\bigr)=:Q(f).
 \end{aligned}
\end{equation}
It is easy to obtain, for $t\ge0$, the following scalar inequality
\[
 t(1+\log_+(2t))\le2\Phi(t). 
\]
Consequently, the Luxemburg normalization gives
$Q(f)\le2\tau(\Phi(f^2))\le2$.
Thus $R_s$, initially defined on $\mathcal P$ by \eqref{eq:R_s def on u_j},
extends to an operator $R_s\in\mathcal{L}_2(H)$ satisfying
\begin{equation}
  \label{eq:cwikel-Rs-L2-bound}
  \|R_s\|_{\mathcal{L}_2(H)}^2=\sum_{j\geq 1}\|R_su_j\|_H^2= \sum_{j>N_s}\|r_s(g_jf)u_j\|_H^2 \leq Q(f) \leq 2.
\end{equation}

On $\mathcal P$ we have
\begin{equation}
\label{eq:cwikel-decomposition}
 \lambda(f)G=F_s+Q_s+R_s.
\end{equation}
The right-hand side of \eqref{eq:cwikel-decomposition} is bounded.
Since it agrees with $\lambda(f)G$ on the dense subspace $\mathcal P$,
it defines a bounded extension $T$ independent of $s$.
We now show that $\lambda(f)G$ is defined on all $H$. Let $\xi\in H$ and choose
$\xi_k\in\mathcal P$ with $\xi_k\to\xi$. Then
$G\xi_k\to G\xi$ and $\lambda(f)G\xi_k=T\xi_k\to T\xi$.
Closedness of $\lambda(f)$ gives
$G\xi\in\operatorname{Dom}\lambda(f)$ and
$\lambda(f)G\xi=T\xi$. Hence the natural product is defined on all of
$H$ and equals $T$. 
For simplicity, we still use $\lambda(f)G$ to denote the extended operator $T$. 
Moreover, $F_s+R_s$ is compact and
\[
 \|\lambda(f)G-(F_s+R_s)\|=\|Q_s\|\le s.
\]
Letting $s\downarrow0$ proves compactness of $\lambda(f)G$ and, consequently,
of $Q_s=\lambda(f)G-F_s-R_s$ for each fixed $s>0$.

The inequalities in \eqref{eq:prelim-singular-inequalities}, the rank bound for $F_s$, and \eqref{eq:cwikel-Rs-L2-bound} give  
\[
 \begin{aligned}
 d_{\lambda(f)G}(2s)
 &\le\operatorname{rank}F_s+d_{Q_s}(s)+d_{R_s}(s)\\
 &\le N_s+s^{-2}\|R_s\|_{\mathcal{L}_2(H)}^2 \\
 &\le 3s^{-2}.
 \end{aligned}
\]
Here $d_{Q_s}(s)=0$ because $\|Q_s\|\le s$ and the distribution
function uses the open interval $(s,\infty)$.
By \eqref{eq:cwikel-counting-norm} we obtain
\begin{equation*}
 \|\lambda(f)G\|_{\mathcal{L}_{2,\infty}(H)}^2
 \leq \sup_{s>0}(2s)^2d_{\lambda(f)G}(2s) \leq 4\times3\le12.
\end{equation*}
Together with the initial normalization reduction, this proves the theorem.
\end{proof}

\section{Consequences of the abstract endpoint estimate}
\label{sec:consequences}

We apply the abstract endpoint estimate to quantum tori to obtain the
$L\log L$ Cwikel estimate and weak trace class bounds for pseudodifferential sandwiches $P_\sigma\lambda(x)P_\rho$. 
We then derive a critical Sobolev embedding and finite-codimensional weighted Sobolev estimates on quantum tori.

\subsection{The endpoint Cwikel estimate on $\mathbb{T}_\theta^d$}
\label{subsec:application-quantum-tori}

In this section, $d\geq2$, $\M=L_\infty(\TT_\theta^d)$ and $H=L_2(\mathbb T_\theta^d)$. We use the canonical trace
and Laplacian from the Section 2.3. In particular, $-\Delta\ge0$.
Write $\|\cdot\|_{p,\infty}$ for the quasi-norm in
$\mathcal{L}_{p,\infty}(H)=L_{p,\infty}(\mathcal B(H),\operatorname{Tr}_H)$.
The abstract estimate \eqref{eq:abstract-endpoint} gives the following quantum torus version of the
classical estimate in \cite[Theorem~1.1]{SZ22}.

\begin{theorem}
\label{thm:torus-endpoint}
Let $d\geq2$ and let $\theta$ be a real antisymmetric $d\times d$ matrix.
For every $x\in L\log L(\mathbb T_\theta^d)$, the sandwich operator
belongs to $\mathcal{L}_{1,\infty}(L_2(\mathbb T_\theta^d))$ and satisfies
\begin{equation}
\label{eq:torus-endpoint}
 \big\|(1-\Delta)^{-\frac{d}{4}}\lambda(x)(1-\Delta)^{-\frac{d}{4}}\big\|_{\mathcal{L}_{1,\infty}(H)}
 \le C_d\|x\|_{L\log L(\mathbb T_\theta^d)},
\end{equation}
where $C_d$ depends only on $d$ and is independent of $\theta$.
For unbounded $x$, this sandwich operator denotes the unique bounded operator determined by 
\begin{equation}
  \label{eq:torus-sandwich-form}
  \big\langle(1-\Delta)^{-\frac d4}\lambda(x)(1-\Delta)^{-\frac d4}\xi,\eta\big\rangle=\tau\Bigl(\bigl((1-\Delta)^{-\frac{d}{4}}\eta\bigr)^*
             x(1-\Delta)^{-\frac{d}{4}}\xi\Bigr)
\end{equation}
for Fourier polynomials $\xi,\eta$. If $x\ge0$, this operator is
positive.
\end{theorem}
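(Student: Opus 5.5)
The plan is to reduce to Theorem~\ref{thm:abstract-endpoint} with $\mathcal M=L_\infty(\mathbb T_\theta^d)$, basis $\{U^n\}_{n\in\mathbb Z^d}$ (unitaries, orthonormal in $H$) and $G=(1-\Delta)^{-d/4}$, diagonal with $g_n=(1+|n|^2)^{-d/4}$. First I check $\|g\|_{\ell_{2,\infty}}\le c_d$ by lattice counting: $\#\{n:(1+|n|^2)^{-d/4}>s\}\le C_d s^{-2}$, as in Section~\ref{subsec:principal-symbol-algebra} with $m=d/2$. This bound is independent of $\theta$, since only the Fourier indexing enters.

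\emph{Positive case.} Let $x\ge0$ and put $f=x^{1/2}$, so $f^2=x\in L\log L$. Theorem~\ref{thm:abstract-endpoint} gives a compact $T=\lambda(f)G$, defined on all of $H$ as the natural product, with $\|T\|_{2,\infty}^2\le 12c_d^2\|x\|_{L\log L}$. For Fourier polynomials $\xi,\eta$ we have $G\xi,G\eta\in\mathcal P$. Hence
\[
\langle T^*T\xi,\eta\rangle=\langle fG\xi,fG\eta\rangle=\tau\bigl((G\eta)^*f^2G\xi\bigr)=\tau\bigl((G\eta)^*xG\xi\bigr),
\]
where the middle equality is traciality for $f\in L_2$ and bounded Fourier polynomials. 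So $T^*T$ is the bounded positive operator determined by \eqref{eq:torus-sandwich-form}; uniqueness holds because Fourier polynomials are dense. Since $\mu(j,T^*T)=\mu(j,T)^2$, we get $\|T^*T\|_{1,\infty}=\|T\|_{2,\infty}^2\le 12c_d^2\|x\|_{L\log L}$.

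\emph{General case.} Write $x=x_1-x_2+\mathrm i(x_3-x_4)$ with $x_k\ge0$, taking $x_1,x_2$ the positive and negative parts of $\operatorname{Re}x=(x+x^*)/2$, and similarly for $\operatorname{Im}x$. Each $\|x_k\|_{L\log L}\le\|x\|_{L\log L}$. This uses $\|x^*\|_{L\log L}=\|x\|_{L\log L}$, the triangle inequality, and $\|y_\pm\|_{L\log L}\le\|y\|_{L\log L}$ for selfadjoint $y$; the last holds because $\mu(t,y_\pm)\le\mu(t,y)$ and the Luxemburg norm is monotone in singular values. The sesquilinear form \eqref{eq:torus-sandwich-form} is linear in $x$, so the sandwich for $x$ is the corresponding combination of the four positive sandwiches. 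It therefore exists, is bounded, and is unique. The quasi-triangle inequality in $\mathcal L_{1,\infty}$ (from \eqref{eq:prelim-singular-inequalities}, with constant $2$ per splitting) then gives \eqref{eq:torus-endpoint} with $C_d$ depending only on $d$. Positivity for $x\ge0$ is already established, since the operator equals $T^*T$.

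The main obstacle is the identification of $T^*T$ with the form-defined sandwich for unbounded $f$, namely justifying $\tau((G\eta)^*f\cdot fG\xi)=\tau((G\eta)^*xG\xi)$. I would handle it with the noncommutative H\"older inequality: $fG\xi$ and $fG\eta$ lie in $L_2$, and $(G\eta)^*x G\xi\in L_1$ is the closure of the product $\bigl((G\eta)^*f\bigr)\bigl(fG\xi\bigr)$ in $L_0(\mathcal M)$. The rest is bookkeeping.
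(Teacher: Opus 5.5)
Your proposal is correct, and it agrees with the paper on the lattice-counting bound and on the positive case: apply Theorem~\ref{thm:abstract-endpoint} to $f=x^{1/2}$, identify the sandwich with $T^*T$, and use $\|T^*T\|_{1,\infty}=\|T\|_{2,\infty}^2$.

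The difference is in the general case. You split $x$ into four positive parts, use linearity of the form \eqref{eq:torus-sandwich-form} in $x$, and apply the quasi-triangle inequality in $\mathcal L_{1,\infty}$ twice. The paper instead uses the polar decomposition $x=v|x|$ and the identity $|x^*|^{1/2}v|x|^{1/2}=x$. This writes the sandwich as a single product $B^*\lambda(v)A$, where $A=\lambda(|x|^{1/2})(1-\Delta)^{-d/4}$ and $B=\lambda(|x^*|^{1/2})(1-\Delta)^{-d/4}$ both lie in $\mathcal L_{2,\infty}$. The weak H\"older inequality \eqref{eq:prelim-weak-holde} then gives the bound with $C_d=24\kappa_d^2$.

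Your route is slightly more elementary. It needs only $\|x^*\|_{L\log L}=\|x\|_{L\log L}$, the triangle inequality, and the monotonicity $\|y_\pm\|_{L\log L}\le\|y\|_{L\log L}$, which you justify correctly via $\mu(t,y_\pm)\le\mu(t,y)$. Its cost is a worse constant, about $4\cdot4\cdot12\,c_d^2$ instead of $24\kappa_d^2$. That is harmless, since only dependence on $d$ matters. The paper's route, in exchange, shows that every $L\log L$ sandwich factors through two weak Hilbert--Schmidt operators, not merely that it is a sum of such factorizations.

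Your explicit check that $T^*T$ represents the form is more careful than the paper, which takes it for granted. One small correction: the middle equality $\langle fG\xi,fG\eta\rangle=\tau\bigl((G\eta)^*f^2G\xi\bigr)$ does not use traciality. It uses only $(fG\eta)^*=(G\eta)^*f$ and associativity in $L_0(\mathcal M)$, which your final paragraph already states correctly.
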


\begin{proof}
The diagonal symbol of the operator $(1-\Delta)^{-d/4}$ in the unitary
Fourier basis $\{U^n\}_{n\in\mathbb Z^d}$ is
$a_n=(1+|n|^2)^{-d/4}$. Elementary lattice counting gives
\[
 \#\{n\in\mathbb Z^d:a_n>t\}\le6^dt^{-2}\quad(0<t<1),
\]
while the counting function vanishes for $t\ge1$. Consequently,
\[
 \kappa_d:=\|\{a_n\}_{n\in\mathbb Z^d}\|_{\ell_{2,\infty}(\mathbb Z^d)}
 \le6^{\frac{d}{2}}.
\]

Suppose first that $x\ge0$. Theorem~\ref{thm:abstract-endpoint}, applied
to $f=x^{1/2}$ and $G=(1-\Delta)^{-d/4}$, shows that
$\lambda(x^{1/2})(1-\Delta)^{-d/4}\in\mathcal{L}_{2,\infty}(H)$ and gives
\begin{equation}
\label{eq:torus-square-root}
 \|\lambda\bigl(x^{\frac{1}{2}}\bigr)(1-\Delta)^{-\frac{d}{4}}\|_{2,\infty}^2
 \le12\kappa_d^2\Norm{x}_{L\log L(\mathbb T_\theta^d)}.
\end{equation}
Hence the sandwich satisfies 
\begin{equation*}
  \begin{split}
    \Norm{(1-\Delta)^{-\frac{d}{4}}\lambda(x)(1-\Delta)^{-\frac{d}{4}}}_{1,\infty}
    &=\Norm{\Bigl(\lambda\bigl(x^{\frac{1}{2}}\bigr)(1-\Delta)^{-\frac{d}{4}}\Bigr)^*\lambda\bigl(x^{\frac{1}{2}}\bigr)(1-\Delta)^{-\frac{d}{4}}}_{1,\infty} \\
    &=\Norm{\lambda\bigl(x^{\frac{1}{2}}\bigr)(1-\Delta)^{-\frac{d}{4}}}_{2,\infty}^2 \\
    &\leq 12\kappa_d^2\Norm{x}_{L\log L}. 
  \end{split}
\end{equation*}

For general $x$, let $x=v|x|$ be its polar decomposition, and set
\[
 A=\lambda\bigl(|x|^{\frac{1}{2}}\bigr)(1-\Delta)^{-\frac{d}{4}},\qquad
 B=\lambda\bigl(|x^*|^{\frac{1}{2}}\bigr)(1-\Delta)^{-\frac{d}{4}}.
\]
The positive operators $|x|$ and $|x^*|$ have the same
$\tau$-distribution, so their $L\log L$ norms both equal
$\|x\|_{L\log L}$.
Applying \eqref{eq:torus-square-root} gives
\[
 \|A\|_{2,\infty}^2,\ \|B\|_{2,\infty}^2 \leq 12\kappa_d^2 \Norm{x}_{L\log L}.
\]
Since $|x^*|^{1/2}v|x|^{1/2}=x$, the operator
$B^*\lambda(v)A$ satisfies \eqref{eq:torus-sandwich-form} on Fourier polynomials.
The weak form of H\"older's inequality \eqref{eq:prelim-weak-holde} and $\|\lambda(v)\|\le1$ yield
\[
 \|B^*\lambda(v)A\|_{1,\infty}
 \le2\|B\|_{2,\infty}\|A\|_{2,\infty}
 \leq 24\kappa_d^2 \Norm{x}_{L\log L(\mathbb{T}_\theta^d)}.
\]
Thus \eqref{eq:torus-endpoint} holds with $C_d=24\kappa_d^2$.
Finally, density of the Fourier polynomials in $H$ gives uniqueness of its bounded operator satisfying \eqref{eq:torus-sandwich-form}.
\end{proof}

\begin{corollary}
\label{cor:pdo-sandwich-endpoint}
Let $\rho,\sigma\in S^{-d/2}(\mathbb R^d;\mathcal{S}(\mathbb T_\theta^d))$.
For every $x\in L\log L(\mathbb T_\theta^d)$, the operator
$P_\sigma^*\lambda(x)P_\rho$ belongs to $\mathcal{L}_{1,\infty}(H)$, where
$H=L_2(\mathbb T_\theta^d)$, and
\begin{equation}
\label{eq:pdo-sandwich-endpoint}
 \begin{aligned}
 \|P_\sigma^*\lambda(x)P_\rho\|_{1,\infty}
 &\le C_d\|B_\sigma\|_{\mathcal B(H)}\|B_\rho\|_{\mathcal B(H)}
         \|x\|_{L\log L(\mathbb T_\theta^d)}.
 \end{aligned}
\end{equation}
Here $B_\rho=(1-\Delta)^{d/4}P_\rho$, $B_\sigma=(1-\Delta)^{d/4}P_\sigma$, 
$C_d$ is the constant in Theorem~\ref{thm:torus-endpoint}, and the
sandwich denotes the unique bounded operator representing the form
\begin{equation}
  \label{eq:pdo-sandwich-form}
  (\xi,\eta)\longmapsto\tau\bigl((P_\sigma\eta)^*xP_\rho\xi\bigr),
 \qquad \xi,\eta\in \mathcal{S}(\mathbb T_\theta^d).
\end{equation}
In particular, this operator is positive when $x\ge0$ and $\rho=\sigma$.
\end{corollary}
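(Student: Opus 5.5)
The plan is to factor both pseudodifferential operators through $(1-\Delta)^{-d/4}$ and reduce to Theorem~\ref{thm:torus-endpoint}. Since $\rho,\sigma\in S^{-d/2}$, the composition calculus together with \eqref{eq:prelim-pdo-sobolev-mapping} shows that $B_\rho=(1-\Delta)^{d/4}P_\rho$ is an order-zero operator. It is therefore bounded on $H$ by \cite[Proposition~10.1]{HLP19ii}, and similarly for $B_\sigma$. On $\mathcal S(\mathbb T_\theta^d)$ we have
\[
 P_\rho=(1-\Delta)^{-\frac d4}B_\rho,\qquad P_\sigma=(1-\Delta)^{-\frac d4}B_\sigma,
\]
because $(1-\Delta)^{-d/4}$ is a diagonal Fourier multiplier that inverts $(1-\Delta)^{d/4}$ on distributional Fourier series. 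Both $P_\rho$ and $B_\rho$ map $\mathcal S(\mathbb T_\theta^d)$ into itself continuously, so $B_\rho\xi$ and $B_\sigma\eta$ are again Fourier-rapidly-decaying elements for $\xi,\eta\in\mathcal S(\mathbb T_\theta^d)$.

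The key step is to rewrite the form \eqref{eq:pdo-sandwich-form}. Let $K$ be the bounded operator of Theorem~\ref{thm:torus-endpoint}. For $\xi,\eta\in\mathcal S$,
\[
 \tau\bigl((P_\sigma\eta)^*xP_\rho\xi\bigr)
 =\tau\Bigl(\bigl((1-\Delta)^{-\frac d4}B_\sigma\eta\bigr)^*x(1-\Delta)^{-\frac d4}B_\rho\xi\Bigr)
 =\langle KB_\rho\xi,B_\sigma\eta\rangle.
\]
The identity \eqref{eq:torus-sandwich-form} was stated only for Fourier polynomials, but here $B_\rho\xi$ and $B_\sigma\eta$ are merely smooth. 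This is the one technical point, and I expect it to be the main obstacle. To handle it, I would approximate $B_\rho\xi$ by its Fourier truncations $(B_\rho\xi)_M$, and likewise $B_\sigma\eta$. The truncations converge in the smooth topology by \eqref{eq:prelim-smooth-fourier-density}, so $(1-\Delta)^{-d/4}$ applied to them converges in $\|\cdot\|_\infty$. Because $x\in L\log L\subset L_1$, the map $(u,v)\mapsto\tau(v^*xu)$ is continuous for the $\|\cdot\|_\infty$-norm in each variable. The right-hand side converges because $K$ is bounded and the truncations converge in $H$. This extends \eqref{eq:torus-sandwich-form} to $\xi,\eta\in\mathcal S$.

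Hence $B_\sigma^*KB_\rho$ is a bounded operator representing the form \eqref{eq:pdo-sandwich-form}. It is unique because $\mathcal S$ is dense in $H$. The ideal property \eqref{eq:prelim-ideal-property} together with \eqref{eq:torus-endpoint} gives
\[
 \|B_\sigma^*KB_\rho\|_{1,\infty}\le\|B_\sigma\|\,\|B_\rho\|\,C_d\|x\|_{L\log L},
\]
which is \eqref{eq:pdo-sandwich-endpoint}. Finally, suppose $x\ge0$ and $\rho=\sigma$. Then $K\ge0$ by Theorem~\ref{thm:torus-endpoint}, so $B_\rho^*KB_\rho\ge0$, which gives the stated positivity.
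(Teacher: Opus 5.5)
Your proposal is correct and follows the paper's proof essentially step for step: you factor $P_\rho=(1-\Delta)^{-d/4}B_\rho$, extend the identity \eqref{eq:torus-sandwich-form} from Fourier polynomials to $\mathcal S(\mathbb T_\theta^d)$ by Fourier truncation using $x\in L_1$, identify the sandwich with $B_\sigma^*KB_\rho$, and conclude with the ideal property (and positivity). The only cosmetic difference is that the paper gets boundedness of $B_\rho$ directly from the Sobolev mapping $P_\rho:H\to W_2^{d/2}$ in \eqref{eq:prelim-pdo-sobolev-mapping}, without going through the composition calculus.
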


\begin{proof}
  Equation \eqref{eq:prelim-pdo-sobolev-mapping}, with $m=-d/2$ and $s=d/2$, gives bounded operators $P_\rho,P_\sigma:H\to W_2^{d/2}(\mathbb T_\theta^d)$. Thus 
$B_\rho=(1-\Delta)^{d/4}P_\rho$ and $B_\sigma=(1-\Delta)^{d/4}P_\sigma$
are bounded on $H$. 
Theorem~\ref{thm:torus-endpoint} supplies the bounded operator $(1-\Delta)^{-d/4}\lambda(x)(1-\Delta)^{-d/4}$. 
Its matrix-coefficient identity \eqref{eq:torus-sandwich-form} extends to $\mathcal{S}(\TT_\theta^d)$ by \eqref{eq:prelim-smooth-fourier-density}.
Indeed, $x\in L_1$, and convergence in seminorms \eqref{eq:prelim-smooth-seminorms} implies convergence in operator norm.
Since $B_\rho$ and $B_\sigma$ preserve $\S(\TT_\theta^d)$, the operator 
\[
 P_\sigma^*\lambda(x)P_\rho=B_\sigma^*(1-\Delta)^{-\frac{d}{4}}\lambda(x)(1-\Delta)^{-\frac{d}{4}}B_\rho 
\]
represent the form in \eqref{eq:pdo-sandwich-form}.
The ideal property \eqref{eq:prelim-ideal-property} now yields
\[
 \begin{aligned}
 &\|B_\sigma^*(1-\Delta)^{-\frac{d}{4}}\lambda(x)(1-\Delta)^{-\frac{d}{4}}B_\rho\|_{1,\infty}\\
 &\qquad\le\|B_\sigma\|_{\mathcal B(H)}
       \|(1-\Delta)^{-\frac{d}{4}}\lambda(x)(1-\Delta)^{-\frac{d}{4}}\|_{1,\infty}
       \|B_\rho\|_{\mathcal B(H)},
 \end{aligned}
\]
which proves \eqref{eq:pdo-sandwich-endpoint}.
\end{proof}

\subsection{A critical Sobolev embedding theorem}
\par The preceding subsection shows that, on quantum tori, the sandwich
operator $(1-\Delta)^{-d/4}\lambda(x)(1-\Delta)^{-d/4}$ belongs to
$\mathcal{L}_{1,\infty}$ and satisfies the estimate \eqref{eq:torus-endpoint}. Its operator norm is therefore controlled by
$\Norm{x}_{L\log L}$. Conbined with K\"othe duality, this gives a critical Sobolev embedding. 
\par We define the Orlicz space $\exp(L_p)(\M)$, $1\leq p<\infty$, via the Orlicz function $\Psi_p(t)=\exp(t^p)-1$. 
By noncommutative K\"othe duality, $\exp(L_1)(\M)$ identifies with the dual of $L\log L(\M)$, up to equivalence of norms; see \cite[Theorem 5.6]{DoddsDoddsdePagter1993}. 
\begin{corollary}
  \label{cor:critical sobolev}
  For $x\in W_2^{d/2}(\mathbb{T}_\theta^d)$, we have $x\in \exp(L_2)(\mathbb{T}_\theta^d)$, and moreover, 
  \begin{equation}
    \label{eq:Sobolev expL_2}
    \Norm{x}_{\exp(L_2)(\mathbb{T}_\theta^d)}\leq C_d'\Norm{x}_{W_2^{d/2}(\mathbb{T}_\theta^d)}. 
  \end{equation}
\end{corollary}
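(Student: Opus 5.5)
The plan is to deduce the embedding from the operator norm bound implicit in Theorem~\ref{thm:torus-endpoint}, and then dualize. Since $\|T\|\le\|T\|_{1,\infty}$, Theorem~\ref{thm:torus-endpoint} gives, for every $0\le y\in L\log L(\mathbb T_\theta^d)$ and every Fourier polynomial $\xi$,
\[
 \tau\Bigl(\bigl((1-\Delta)^{-\frac d4}\xi\bigr)^*y(1-\Delta)^{-\frac d4}\xi\Bigr)\le C_d\|y\|_{L\log L}\|\xi\|_2^2 .
\]
Writing $u=(1-\Delta)^{-d/4}\xi$, this reads $\tau(u^*yu)=\tau(y\,uu^*)\le C_d\|y\|_{L\log L}\|u\|_{W_2^{d/2}}^2$ for all Fourier polynomials $u$. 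Taking the supremum over positive $y$ in the unit ball of $L\log L$ and using K\"othe duality \cite[Theorem 5.6]{DoddsDoddsdePagter1993}, I get
\[
 \|uu^*\|_{\exp(L_1)}\le C\,C_d\|u\|_{W_2^{d/2}}^2 .
\]
The supremum over positive $y$ suffices because $uu^*\ge0$. Concretely, one can test against $y$ built from spectral projections of $uu^*$, so that $y$ commutes with $uu^*$ and the problem reduces to the commutative von Neumann subalgebra it generates. There the classical duality $(L\log L)^*\simeq\exp(L)$ is standard.

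Next I pass from $uu^*$ to $u$ via $2$-convexification. Since $\mu(t,uu^*)=\mu(t,u)^2$ and $\Psi_2(t)=\Psi_1(t^2)$, the Luxemburg norms satisfy $\|u\|_{\exp(L_2)}=\|uu^*\|_{\exp(L_1)}^{1/2}$. This uses the fact that $u$ and $|u^*|$ have the same singular value function, so $\tau(\Psi_2(|u|/c))=\tau(\Psi_1(uu^*/c^2))$. The result is the estimate \eqref{eq:Sobolev expL_2} for Fourier polynomials, with $C_d'=(C\,C_d)^{1/2}$.

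Finally I extend to all $x\in W_2^{d/2}$ by density. Take Fourier polynomials $x_k\to x$ in $W_2^{d/2}$. The estimate makes $(x_k)$ Cauchy in $\exp(L_2)$, which is a Banach space continuously embedded in $L_2$. Since $x_k\to x$ in $L_2$ as well, the $\exp(L_2)$-limit is $x$, and the norm bound passes to the limit.

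I expect the main obstacle to be the duality step. The cited K\"othe duality identifies $\exp(L_1)$ with the dual of $L\log L$ only up to equivalent norms. One must also check that the norm of $uu^*$ is computed by the Orlicz-type pairing $\sup\{\tau(y\,uu^*):y\ge0,\ \|y\|_{L\log L}\le1\}$, that is, that the positive part of the unit ball is norming for positive elements. I would handle this through the commutative reduction to the abelian algebra generated by $uu^*$ described above.
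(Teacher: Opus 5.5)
Your proposal is correct and follows essentially the same route as the paper. Both arguments take the operator-norm consequence of Theorem~\ref{thm:torus-endpoint}, apply K\"othe duality between $L\log L$ and $\exp(L_1)$ tested against positive elements, and use the $2$-convexification identity $\|u\|_{\exp(L_2)}^2=\|uu^*\|_{\exp(L_1)}$. Your commutative reduction (showing that positive test elements suffice) and your closing density argument (from Fourier polynomials to all of $W_2^{d/2}$) spell out two points that the paper leaves implicit, since the paper applies the form identity \eqref{eq:torus-sandwich-form} directly to arbitrary $y\in L_2$.
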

\begin{proof}
  To prove \eqref{eq:Sobolev expL_2}, it suffices to show that for every $y\in L_2(\mathbb{T}_\theta^d)$ we have 
  \begin{equation*}
    \Norm{(1-\Delta)^{-\frac{d}{4}}y}_{\exp(L_2)(\mathbb{T}_\theta^d)}\leq C_d'\Norm{y}_{L_2(\mathbb{T}_\theta^d)}. 
  \end{equation*}
  By the duality between $\exp(L_1)$ and $L\log L$ spaces (up to an equivalent norm), for $x\geq0$, we have 
  \begin{equation}
    \label{eq:orlicz-duality-LlogL}
    \Norm{x}_{\exp(L_1)(\mathbb{T}_\theta^d)}
      \asymp\sup\left\{\tau(xz):z\geq0, \Norm{z}_{L\log L(\TT_\theta^d)}\leq1\right\}. 
  \end{equation}
  Therefore, plugging $x=\big|[(1-\Delta)^{-d/4}y]^*\big|^2$ into \eqref{eq:orlicz-duality-LlogL} implies
  \begin{equation*}
    \begin{split}
      \Norm{(1-\Delta)^{-\frac{d}{4}}y}_{\exp(L_2)(\mathbb{T}_\theta^d)}^2
      &=\Norm{[(1-\Delta)^{-\frac{d}{4}}y]^*}_{\exp(L_2)(\mathbb{T}_\theta^d)}^2 \\ 
      &=\Norm{\big|[(1-\Delta)^{-\frac{d}{4}}y]^*\big|^2}_{\exp(L_1)(\mathbb{T}_\theta^d)} \\ 
      &\leq C_{\text{eq}} \sup\left\{\tau\Bigl(\big|[(1-\Delta)^{-\frac{d}{4}}y]^*\big|^2z\Bigr):\Norm{z}_{L\log L}\leq1\right\} \\
      &=C_{\text{eq}} \sup\left\{\left|\inner{y,(1-\Delta)^{-\frac{d}{4}}\lambda(z)(1-\Delta)^{-\frac{d}{4}}y}\right|:\Norm{z}_{L\log L}\leq1\right\} \\
      &\leq C_{\text{eq}}C_d \Norm{y}_{L_2(\mathbb{T}_\theta^d)}^2,
    \end{split}
  \end{equation*}
  where the last inequality follows from Theorem~\ref{thm:torus-endpoint}, which gives 
  \begin{equation*}
    \big\|{(1-\Delta)^{-\frac{d}{4}}\lambda(z)(1-\Delta)^{-\frac{d}{4}}}\big\|_\infty \leq \big\|{(1-\Delta)^{-\frac{d}{4}}\lambda(z)(1-\Delta)^{-\frac{d}{4}}}\big\|_{1,\infty} \leq C_d\Norm{z}_{L\log L}\leq C_d. 
  \end{equation*}
  This proves the corollary with $C'_d=(C_{\text{eq}}C_d)^{1/2}$. 
\end{proof}

\begin{remark}
The critical Sobolev embedding in Corollary~\ref{cor:critical sobolev} is equivalent, up to constants, to the uniform operator norm estimate
\begin{equation*}
 \big\|(1-\Delta)^{-\frac d4}\lambda(x)(1-\Delta)^{-\frac d4}\big\|_{\infty}
 \le C_d\|x\|_{L\log L(\mathbb T_\theta^d)},
 \qquad x\in L\log L(\mathbb T_\theta^d)_+.
\end{equation*}
The embedding implies this estimate by Orlicz H\"older's inequality, while the converse follows from the K\"othe duality argument above.
For the corresponding equivalence in the classical setting, see \cite[Theorem~2.3]{LSZ20Kalton}.
\end{remark}

\subsection{Finite-codimensional weighted Sobolev estimates}
\label{subsec:further-consequences}

\par The decomposition in \eqref{eq:cwikel-decomposition} also gives
subspaces of finite codimension at most $n$ on which $\lambda(f)G$ satisfies the estimate below. 
\par As in Section~\ref{sec:cwikel-estimates}, we assume that the von Neumann
algebra $(\M,\tau)$ is finite and that $H=L_2(\M)$ admits a basis of
unitaries in $\M$.

\begin{corollary}
\label{cor:finite-codimension}
Suppose $f\in L_0(\M)_+$ satisfies $f^2\in L\log L(\M)$, and
$G\in\mathcal B(H)$ is given by $Gu_j=g_ju_j$, with
$g=\{g_j\}\in\ell_{2,\infty}$.
Then for every integer $n\ge1$, there is a closed subspace $E_n\subset H$
of codimension at most $n$ such that
\begin{equation}
\label{eq:finite-codimension-bound}
 \|\lambda(f)G\xi\|_H^2
 \le\frac{12}{n}\Norm{f^2}_{L\log L(\M)}\Norm{g}_{\ell_{2,\infty}}^2\|\xi\|_H^2,
 \qquad \xi\in E_n.
\end{equation}
\end{corollary}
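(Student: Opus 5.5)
We reduce to the normalization \eqref{eq:cwikel-normalization} exactly as in the proof of Theorem~\ref{thm:abstract-endpoint}. The rescaling $\lambda(f)G=ab(\lambda(\tilde f)\tilde G)V$ with $V$ unitary and diagonal maps closed subspaces of codimension at most $n$ onto such subspaces (via $V$), and it multiplies the squared norm by $a^2b^2$. So it suffices to find $E_n$ with $\|\lambda(f)G\xi\|^2\le\frac{12}{n}\|\xi\|^2$ under $\|f^2\|_{L\log L}=1$ and $\|g\|_{\ell_{2,\infty}}=1$.

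The most direct route uses the fact, proved in Theorem~\ref{thm:abstract-endpoint}, that $T=\lambda(f)G$ is compact with $\|T\|_{2,\infty}^2\le 12$. Then $\mu(n,T)^2\le 12/(n+1)\le 12/n$ for every $n\geq1$. Let $E_n$ be the orthogonal complement of the span of the eigenvectors of $|T|$ for its $n$ largest eigenvalues $\mu(0,T),\dots,\mu(n-1,T)$, counted with multiplicity. If $T$ has fewer than $n$ nonzero singular values, take those eigenvectors instead. Then $E_n$ is closed, has codimension at most $n$, and $\|T\xi\|=\||T|\xi\|\le\mu(n,T)\|\xi\|$ for $\xi\in E_n$ by the spectral theorem for the compact positive operator $|T|$. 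This gives \eqref{eq:finite-codimension-bound}.

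Alternatively, in keeping with the remark that the decomposition \eqref{eq:cwikel-decomposition} furnishes the subspaces, one can build $E_n$ explicitly. Choose $s$ with $s^{-2}\approx n/3$ and write $T=F_s+Q_s+R_s$. Take $E_n=\ker F_s\cap\ker(\text{top spectral part of }R_s)$. Here $\operatorname{rank}F_s\le N_s\le s^{-2}$, and the Hilbert--Schmidt bound $\|R_s\|_{\mathcal L_2}^2\le2$ allows removal of at most $2s^{-2}$ directions, leaving $\|R_s\xi\|\le s\|\xi\|$. On $E_n$ this gives $\|T\xi\|\le\|Q_s\xi\|+\|R_s\xi\|\le 2s\|\xi\|$.

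The only delicate point is bookkeeping the constants so that the total codimension is at most $n$ while the bound is $12/n$. The spectral-theorem argument sidesteps this entirely, since it uses only the already established weak-$\mathcal L_2$ bound. For that reason I would present the first argument. I expect no genuine obstacle beyond the reduction step, where one must check that the unitary $V$ transports $E_n$ correctly: define $E_n$ for the normalized operator $\tilde T$ and set $E_n'=V^{*}E_n$.
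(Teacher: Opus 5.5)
Your argument is correct, but your main route differs from the paper's. The paper's proof is essentially your ``alternative'' sketch. After normalizing, it takes $s=\sqrt{3/n}$ in the decomposition \eqref{eq:cwikel-decomposition} and sets $E_n=\ker P_s\cap\ker\mathbf 1_{(s,\infty)}(|R_s|)$. Then $\operatorname{codim}E_n\le s^{-2}+s^{-2}\|R_s\|_{\mathcal L_2(H)}^2\le 3s^{-2}=n$, and on $E_n$ one has $\|\lambda(f)G\xi\|\le\|Q_s\xi\|+\|R_s\xi\|\le 2s\|\xi\|$. The paper excises $\ker P_s$ rather than your $\ker F_s$; both choices work.

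Your primary argument instead treats the corollary as a formal consequence of Theorem~\ref{thm:abstract-endpoint}. Since $\sup_j (j+1)\mu(j,T)^2=\sup_{s>0}s^2d_T(s)$, you get $\mu(n,T)^2\le 12\|g\|_{\ell_{2,\infty}}^2\|f^2\|_{L\log L(\mathcal M)}/(n+1)$. Removing the top $n$ eigenvectors of $|T|$ then gives the bound, even with $n+1$ in place of $n$. This is shorter, and by the min--max principle it is the best possible choice of subspace. It also makes your normalization step and the transport by $V$ unnecessary, because the theorem's estimate is already homogeneous in $f$ and $g$.

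What the paper's construction buys is an explicit subspace. The first constraint, $\ker P_s$, depends only on $G$: it is spanned by the high-index basis vectors, i.e.\ the high Fourier frequencies on $\mathbb T_\theta^d$. Only the Hilbert--Schmidt remainder $R_s$ needs a spectral cut. The subsequent remark uses exactly this to present Corollary~\ref{cor:sobolev-subspace} as a counterpart of Solomyak's weighted approximation estimate. For the inequality itself, either route suffices.
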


\begin{proof}
If $f=0$ or $g=0$, take $E_n=H$. Otherwise, first assume that the normalization condition
\eqref{eq:cwikel-normalization} holds.  
We next construct $E_n$ with codim$(E_n)\leq n$ such that
\begin{equation}
  \label{eq:finite-codimension-bound-normalized}
  \|\lambda(f)G\xi\|_H^2
 \le\frac{12}{n}\|\xi\|_H^2, \qquad \xi\in E_n. 
\end{equation}
Choose $s=\sqrt{3/n}>0$. For this $s$ we use the decomposition $\lambda(f)G=F_s+Q_s+R_s$ from \eqref{eq:cwikel-decomposition}. 
Recall that $F_s=\lambda(f)GP_s$. We set 
\begin{equation}
  \label{eq:finite-codimension-subspace}
  \Pi_s=\mathbf1_{(s,\infty)}(|R_s|),
 \qquad E_n=\ker P_s\cap\ker\Pi_s.
\end{equation}
The Hilbert--Schmidt bound \eqref{eq:cwikel-Rs-L2-bound} implies
\[
 \operatorname{codim}_H E_n
 \le\operatorname{rank}P_s+\operatorname{rank}\Pi_s
 \le s^{-2}+s^{-2}\|R_s\|_{\mathcal{L}_2(H)}^2
 \le 3s^{-2}=n.
\]
For $\xi\in E_n$ we have $F_s\xi=0$ and
$\|R_s\xi\|_H\le s\|\xi\|_H$. Since $\|Q_s\|\le s$, it follows that
\begin{equation*}
  \|\lambda(f)G\xi\|_H\le2s\|\xi\|_H=2\sqrt{\frac 3n}\|\xi\|_H. 
\end{equation*}
This is exactly \eqref{eq:finite-codimension-bound-normalized}.

For general $f$ and $G$, put
$a=\|f^2\|_{L\log L(\M)}^{1/2}$, $b=\|g\|_{\ell_{2,\infty}}$,
and write $G=|G|V$ with the diagonal unitary $V$ from the proof of
Theorem~\ref{thm:abstract-endpoint}. Apply the normalized construction
to $\tilde f=f/a$ and $\tilde G=|G|/b$, obtaining a subspace
$\widetilde E_n$. Then $E_n=V^*\widetilde E_n$ is closed and has
codimension at most $n$. The identity
$\lambda(f)G=ab(\lambda(\tilde f)\tilde G)V$ gives
\eqref{eq:finite-codimension-bound}.
\end{proof}

On the quantum tori, Corollary \ref{cor:finite-codimension} yields 
the following weighted estimate on subspaces of finite codimension in $W_2^{d/2}(\mathbb T_\theta^d)$. 
For $x\ge0$ and $u\in\operatorname{Dom}\lambda(x^{1/2})$, we use $\tau(u^*xu)$ to denote $\|\lambda(x^{1/2})u\|_2^2$.

\begin{corollary}
\label{cor:sobolev-subspace}
Let $x\in L\log L(\mathbb T_\theta^d)_+$ and let $n\ge1$ be an integer.
There exists a closed subspace
$F_n^{\mathrm{Sob}}\subset W_2^{d/2}(\mathbb T_\theta^d)$ of codimension at most $n$
such that
\begin{equation*}
 \tau(u^*xu) \le\frac{c_d}{n}\|x\|_{L\log L(\mathbb T_\theta^d)} \|u\|_{W_2^{d/2}(\mathbb T_\theta^d)}^2,
 \qquad u\in F_n^{\mathrm{Sob}}.
\end{equation*}
\end{corollary}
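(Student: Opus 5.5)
The plan is to transport Corollary~\ref{cor:finite-codimension} to $W_2^{d/2}$ through the isometric isomorphism $(1-\Delta)^{-d/4}:H\to W_2^{d/2}(\mathbb T_\theta^d)$. Take $\mathcal M=L_\infty(\mathbb T_\theta^d)$ with the Fourier basis $\{U^n\}$ of unitaries, $G=(1-\Delta)^{-d/4}$, and $f=x^{1/2}\ge0$. Then $f^2=x\in L\log L$, and $\|g\|_{\ell_{2,\infty}}=\kappa_d\le 6^{d/2}$ as computed in the proof of Theorem~\ref{thm:torus-endpoint}. If $x=0$ we take $F_n^{\mathrm{Sob}}=W_2^{d/2}$. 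Otherwise Corollary~\ref{cor:finite-codimension} yields a closed subspace $E_n\subset H$ of codimension at most $n$ with
\[
 \|\lambda(x^{1/2})G\xi\|_2^2\le\frac{12\kappa_d^2}{n}\|x\|_{L\log L}\|\xi\|_2^2,\qquad \xi\in E_n .
\]

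Next set $F_n^{\mathrm{Sob}}=G(E_n)$. Because $G$ is an isometric isomorphism of $H$ onto $W_2^{d/2}$, this subspace is closed in $W_2^{d/2}$. Its codimension equals that of $E_n$, since $G$ induces a linear isomorphism $H/E_n\cong W_2^{d/2}/G(E_n)$.

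For $u\in F_n^{\mathrm{Sob}}$, write $u=G\xi$ with $\xi=(1-\Delta)^{d/4}u\in E_n$, so that $\|\xi\|_2=\|u\|_{W_2^{d/2}}$. The one point needing care is that $\tau(u^*xu)$ is well defined and equals $\|\lambda(x^{1/2})G\xi\|_2^2$. This requires $u=G\xi\in\operatorname{Dom}\lambda(x^{1/2})$, and Theorem~\ref{thm:abstract-endpoint} supplies exactly this: the bounded extension of $\lambda(f)G$ coincides with the natural multiplication product on all of $H$. By the convention fixed before the statement, $\tau(u^*xu)=\|\lambda(x^{1/2})u\|_2^2$.

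Combining these gives the claim with $c_d=12\kappa_d^2$. I do not expect any real obstacle. The only subtle step is the domain issue for unbounded $x$, and it is already settled by Theorem~\ref{thm:abstract-endpoint}.
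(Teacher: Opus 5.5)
Your proposal is correct and follows essentially the same route as the paper: apply Corollary~\ref{cor:finite-codimension} with $f=x^{1/2}$ and $G=(1-\Delta)^{-d/4}$, then transport $E_n$ to $W_2^{d/2}$ via the isometric isomorphism $(1-\Delta)^{-d/4}$, which yields $c_d=12\kappa_d^2$. Your explicit check that $u=G\xi\in\operatorname{Dom}\lambda(x^{1/2})$ holds by the final assertion of Theorem~\ref{thm:abstract-endpoint} makes precise a point the paper's proof leaves implicit.
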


\begin{proof}
The case $x=0$ is immediate, with $F_n^{\mathrm{Sob}}=W_2^{d/2}(\mathbb T_\theta^d)$.
Otherwise, apply Corollary~\ref{cor:finite-codimension} with 
\begin{equation*}
  \M=\TT_\theta^d, \qquad f=x^{\frac{1}{2}}, \qquad G=(1-\Delta)^{-\frac{d}{4}}.
\end{equation*}
For each $n$, this gives a subspace $E_n$ with $\operatorname{codim}(E_n)\leq n$. Set 
\begin{equation*}
  F_n^{\mathrm{Sob}}=(1-\Delta)^{-\frac{d}{4}} E_n\subset W_2^{\frac{d}{2}}(\mathbb{T}_\theta^d). 
\end{equation*}
By \eqref{eq:prelim-sobolev}, $(1-\Delta)^{-d/4}$ is an isometric isomorphism from
$L_2(\mathbb T_\theta^d)$ onto $W_2^{d/2}(\mathbb T_\theta^d)$,
hence $F_n^{\mathrm{Sob}}$ is closed in the Sobolev norm and has the same codimension as $E_n$.
Moreover, for $u\in F_n^{\mathrm{Sob}}$, we can write $u=(1-\Delta)^{-d/4}\xi$, where $\xi\in E_n$. By Corollary~\ref{cor:finite-codimension}, 
\begin{equation*}
  \begin{split}
  \tau(u^*xu)&=\Norm{x^{\frac{1}{2}}u}_{L_2(\mathbb{T}_\theta^d)}^2=\Norm{\lambda\bigl(x^{\frac{1}{2}}\bigr)(1-\Delta)^{-\frac{d}{4}}\xi}_2^2 \\
  &\leq \frac{12\kappa_d^2}{n}\Norm{x}_{L\log L(\mathbb{T}_\theta^d)}\Norm{\xi}_2^2 \\
  &\leq \frac{c_d}{n}\Norm{x}_{L\log L(\mathbb{T}_\theta^d)}\Norm{u}_{W_2^{d/2}(\mathbb{T}_\theta^d)}^2, 
  \end{split}
\end{equation*}
where $c_d=12\kappa_d^2$.
\end{proof}

\begin{remark}
  Corollary~\ref{cor:sobolev-subspace} is a quantum torus counterpart
of \eqref{eq:Solomyak approximation}.
In the classical case, the proof in \cite{SZ22} uses local polynomial
approximation and a Besicovitch covering.
Here $E_n$ is the intersection of the two kernels in
\eqref{eq:finite-codimension-subspace}.
Its codimension is bounded by the ranks of the two projections.
\end{remark}

\section{Weyl's laws and Connes' integration formulas for Orlicz potentials}
\label{sec:orlicz-weyl}
In this section, we prove Weyl's laws for the sandwiches
in \eqref{eq:ow-operator-and-symbol} with $L\log L$ potentials.
We use these asymptotics to derive Connes' integration formulas
for every continuous normalized trace.
We also prove a semiclassical Weyl's law for critical
Schr\"odinger operators.

Throughout this section, $d\geq2$ and $H=L_2(\mathbb T_\theta^d)$.
We use the algebra $\Pi$ and the symbol map from
Section~\ref{subsec:principal-symbol-algebra}.
For $x\in L\log L(\mathbb T_\theta^d)$, the operator
\[
 (1-\Delta)^{-\frac{d}{4}}\lambda(x)(1-\Delta)^{-\frac{d}{4}}
\]
is defined by the form in Theorem~\ref{thm:torus-endpoint}.
That theorem gives
\begin{equation}
\label{eq:ow-endpoint-continuity}
 \bigl\|(1-\Delta)^{-\frac{d}{4}}\lambda(x-y)(1-\Delta)^{-\frac{d}{4}}\bigr\|_{1,\infty}
 \leq C_d\|x-y\|_{L\log L(\mathbb T_\theta^d)},
 \qquad x,y\in L\log L(\mathbb T_\theta^d).
\end{equation}
The map $x\mapsto(1-\Delta)^{-d/4}\lambda(x)(1-\Delta)^{-d/4}$
is linear and preserves adjoints.
It maps positive elements to positive operators.

\subsection{Spectral preliminaries and approximation in $L\log L$}

We recall the following results from \cite{Z26}.

\begin{proposition}
\label{prop:ow-bounded-input}
For every $A\in\Pi$,
\begin{equation*}
 \lim_{j\to\infty}(j+1)\mu\bigl(j,A(1-\Delta)^{-\frac{d}{2}}\bigr)
 =\frac{1}{d}\int_{\mathbb S^{d-1}}
             \tau\bigl(|\operatorname{sym}(A)(s)|\bigr)\,ds,
\end{equation*}
and
\begin{equation*}
 [(1-\Delta)^{-\frac{d}{4}},A]\in(\mathcal{L}_{2,\infty}(H))_0.
\end{equation*}
If $A=A^*$, then
\begin{equation*}
 \lim_{j\to\infty}(j+1)\lambda_j^\pm\bigl((1-\Delta)^{-\frac{d}{4}}A(1-\Delta)^{-\frac{d}{4}}\bigr)
 =\frac{1}{d}\int_{\mathbb S^{d-1}}
             \tau\bigl(\operatorname{sym}(A)(s)_\pm\bigr)\,ds.
\end{equation*}
\end{proposition}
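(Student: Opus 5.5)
The plan is to prove the three assertions separately, building everything from operators of the form $\lambda(y)\pi(g)$ and passing to all of $\Pi$ by density and approximation. The commutator assertion comes first, because the signed Weyl law reduces to it.

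\textbf{Commutator.} Write $\Lambda=(1-\Delta)^{-d/4}$. The set of $A\in\Pi$ with $[\Lambda,A]\in(\mathcal L_{2,\infty}(H))_0$ is a norm-closed $*$-subalgebra.
\begin{itemize}
\item It is a subalgebra by the Leibniz rule $[\Lambda,AB]=[\Lambda,A]B+A[\Lambda,B]$, since $(\mathcal L_{2,\infty})_0$ is a bimodule.
\item For closedness, $[\Lambda,A_n]\to[\Lambda,A]$ in $\mathcal L_{2,\infty}$ whenever $A_n\to A$ in norm, because $\|[\Lambda,B]\|_{2,\infty}\le 2\|B\|\,\|\Lambda\|_{2,\infty}$, and $(\mathcal L_{2,\infty})_0$ is closed.
\end{itemize}
So it suffices to check generators. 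The operator $\pi(g)$ commutes with $\Lambda$. For $\lambda(U^m)$ we have
\[
[\Lambda,\lambda(U^m)]U^n=\bigl((1+|n+m|^2)^{-d/4}-(1+|n|^2)^{-d/4}\bigr)U^{m+n}.
\]
This is a weighted shift whose weights are $O(|n|^{-d/2-1})$, so it lies in $\mathcal L_{d/(d/2+1),\infty}\subset(\mathcal L_{2,\infty})_0$. Fourier polynomials are dense in $C(\mathbb T_\theta^d)$, which completes the commutator claim.

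\textbf{Singular value asymptotics.} We have $A\Lambda^2\in\mathcal L_{1,\infty}$ with $\|A\Lambda^2\|_{1,\infty}\le\|A\|\,\|\Lambda^2\|_{1,\infty}$, and the right-hand side is continuous in $\operatorname{sym}(A)$. By Lemma~\ref{lem:weak-ideal-approximation} it therefore suffices to treat a dense $*$-subalgebra, and we also note that compact $A$ give $A\Lambda^2\in(\mathcal L_{1,\infty})_0$ (Lemma~\ref{lem:weak-weyl-stability}). So the plan is:
\begin{itemize}
\item Take $A=\sum_k\lambda(y_k)\pi(g_k)$ with $y_k$ Fourier polynomials and $g_k$ smooth.
\item Compute $\mu(A\Lambda^2)$ through $|A\Lambda^2|^2=\Lambda^2A^*A\Lambda^2$.
\item Use the commutator result modulo $(\mathcal L_{1/2,\infty})_0$ to replace this by $A^*A\Lambda^4$.
\item Handle $A^*A\Lambda^4$, modulo compacts in $\Pi$, by the known trace/Weyl formula for $B(1-\Delta)^{-d}$ with $B\ge0$ in $\Pi$, as in \cite{MSZ19,SXZ23}.
\end{itemize}
Concretely, we will localize in angular sectors $\pi(\chi_\ell)$ for a fine partition of unity on $\mathbb S^{d-1}$. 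On each sector, $\operatorname{sym}(A)$ is approximately constant in $s$, say equal to $y_\ell\in C(\mathbb T_\theta^d)$. The counting function of $\lambda(|y_\ell|)\Lambda^2$ restricted to the sector is then computed by lattice point counting in a cone, which yields $\frac1d\,\tau(|y_\ell|)\,|\text{sector}|$. Gluing the sectors, with the sector overlaps controlled by quasi-orthogonality, gives the integral formula.

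\textbf{Signed eigenvalues and the main obstacle.} This step is the main obstacle, because one must realize $\tau(\mu(\cdot))$ spectrally for a noncommutative $y_\ell$. Our first attempt will be to approximate $|y_\ell|$ by spectral step functions $\sum c_i p_i$ with projections $p_i$ in $L_\infty$. We will then treat the pieces $\lambda(p_i)\Lambda^2\lambda(p_i)$, which are mutually orthogonal modulo $(\mathcal L_{1,\infty})_0$ by the commutator result, and obtain Weyl's law for each piece from Lemma~\ref{lem:distribution-singular-asymptotics} applied to the compression.

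For the signed statement, write $A=A_+-A_-$, with $A_\pm\in\Pi$ and $A_+A_-=0$. Then
\[
\Lambda A\Lambda=\Lambda A_+\Lambda-\Lambda A_-\Lambda .
\]
Each term is positive, and their product lies in $(\mathcal L_{1/2,\infty})_0$ by the commutator claim. Hence, modulo $(\mathcal L_{1,\infty})_0$, the positive and negative parts of $\Lambda A\Lambda$ are $\Lambda A_\pm\Lambda$. Finally, $\Lambda A_\pm\Lambda$ has the same singular value asymptotics as $A_\pm\Lambda^2$, again by the commutator claim and Lemma~\ref{lem:weak-weyl-stability}. Applying Lemma~\ref{lem:ow-signed-stability} and $\operatorname{sym}(A_\pm)=\operatorname{sym}(A)_\pm$ finishes the proof.
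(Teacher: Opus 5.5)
The paper gives no argument for this proposition; all three assertions are quoted from \cite{Z26} (Corollary~5.2, equation~(5.2), Theorem~5.8). Your proof of the commutator inclusion is a correct, self-contained substitute. The admissible $A$ form a norm-closed $*$-subalgebra of $\Pi$, and $\pi(g)$ commutes with $\Lambda$. Moreover $[\Lambda,\lambda(U^m)]=\lambda(U^m)D_m$ with $D_m$ diagonal of size $O((1+|n|)^{-d/2-1})$, so it lies in $\mathcal L_{2d/(d+2),\infty}\subset(\mathcal L_{2,\infty}(H))_0$. The phase from $U^mU^n$ and the quasi-triangle constant are harmless. The singular-value law, however, is not established by your plan. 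The signed law inherits this gap, since you reduce it to the unsigned law for $A_\pm$.

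\textbf{The references do not supply the missing law.} \cite{MSZ19} gives a trace formula. Knowing $\varphi(B)$ for all normalized traces only determines the logarithmic means $\frac{1}{\log N}\sum_{j\le N}\mu(j,B)$, not $\lim_j(j+1)\mu(j,B)$. \cite{SXZ23} concerns order $-1$. So passing from $A\Lambda^2$ to $A^*A(1-\Delta)^{-d}$ only trades the claim for a statement of the same kind.

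\textbf{Your concrete plan fails at the one nontrivial point.} Angular localisation and lattice counting in a cone compute counting functions of Fourier multipliers only. Once $\lambda(|y_\ell|)$ stands in front, the operator is no longer diagonal in $\{U^n\}$. Even for $\theta=0$ you would need spatial localisation, which is exactly what is unavailable here. The spectral-projection fallback does not repair this, for two reasons.
\begin{enumerate}
\item The projections $p_i$ lie in $L_\infty(\mathbb T_\theta^d)$, not in $C(\mathbb T_\theta^d)$; for $\theta=0$ the latter has no nontrivial projections. So your commutator result says nothing about $[\Lambda,\lambda(p_i)]$, and quasi-orthogonality would require a Cwikel-type bound at the critical exponent, such as Theorem~\ref{thm:abstract-endpoint}.
\item Lemma~\ref{lem:distribution-singular-asymptotics} only says that the counting-function limit and the singular-value limit exist together and agree. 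It does not produce the value $\frac1d\tau(p_i)|\Omega|$ for $\lambda(p_i)\Lambda^2\pi(\mathbf 1_\Omega)\lambda(p_i)$, where $\Omega\subset\mathbb S^{d-1}$ is a sector.
\end{enumerate}
Quasi-orthogonality of $\sum_i\lambda(p_i)=I$ shows at best that the constants of the pieces add up to the constant of $\Lambda^2\pi(\mathbf 1_\Omega)$. It does not show that each constant is proportional to $\tau(p_i)$. That proportionality is the missing idea. It would come from a genuinely asymptotic (not merely tracial) use of gauge invariance, or from a Szeg\H{o}-type limit theorem for compressions of $\lambda(b)$ to Fourier shells. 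This is the actual content of \cite{SXZ23,Z26}.

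\textbf{How to close the gap.} You can cite \cite{Z26} as the paper does. Alternatively, deduce the law from the order $-1$ asymptotics of \cite{SXZ23} through $X=(A^*A)^{1/(2d)}(-\Delta)^{-1/2}$. Your commutator argument gives $(X^*X)^d-A^*A(1-\Delta)^{-d}\in(\mathcal L_{1/2,\infty}(H))_0$, and $\mu(j,(X^*X)^d)=\mu(j,X)^{2d}$.

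\textbf{A separate point in the signed step.} Identifying $(\Lambda A\Lambda)_\pm$ with $\Lambda A_\pm\Lambda$ modulo $(\mathcal L_{1,\infty}(H))_0$ does not follow from Lemma~\ref{lem:ow-signed-stability}, which handles only additive perturbations. You need an additional argument, for example Glazman's lemma together with $(\Lambda A_-\Lambda)^{1/2}(\Lambda A_+\Lambda)^{1/2}\in(\mathcal L_{1,\infty}(H))_0$.
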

\begin{proof}
The singular-value formula follows from
\cite[Corollary~5.2]{Z26} with $p=1$.
The commutator inclusion follows from
\cite[equation~(5.2)]{Z26} with $\alpha=d/2$.
The eigenvalue formulas follow from
\cite[Theorem~5.8]{Z26} with $m=d$.
\end{proof}

\begin{lemma}
\label{lem:ow-smooth-density}
Fourier polynomials are dense in $L\log L(\mathbb T_\theta^d)$.
Every selfadjoint element of $L\log L(\mathbb T_\theta^d)$ can be approximated by selfadjoint Fourier
polynomials.
\end{lemma}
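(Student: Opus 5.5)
The plan is a two-stage approximation: first by bounded elements, then by Fourier polynomials. Both stages can be done so that selfadjointness is preserved.

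\emph{Stage one: truncation.} Let $x\in L\log L(\mathbb T_\theta^d)$. If $x=x^*$, write $x=x_+-x_-$. It suffices to approximate each $x_\pm\geq0$, because differences of selfadjoint approximants are selfadjoint. For general $x$, apply the argument to $\operatorname{Re}x=(x+x^*)/2$ and $\operatorname{Im}x=(x-x^*)/(2\mathrm i)$. So take $x\ge0$ and set $x_N=x\mathbf1_{[0,N]}(x)\in L_\infty(\mathbb T_\theta^d)$. Then $0\le x-x_N=x\mathbf1_{(N,\infty)}(x)$. For fixed $c>0$, the function $\Phi(t/c)\mathbf 1_{(N,\infty)}(t)$ decreases pointwise to zero as $N\to\infty$, and it is dominated by $\Phi(t/c)$. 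Since $\tau(\Phi(x/c))<\infty$ for every $c>0$, normality of $\tau$ (monotone convergence in the spectral calculus) gives $\tau(\Phi((x-x_N)/c))\to0$.

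Here is why $\tau(\Phi(x/c))<\infty$ for every $c>0$: $\Phi$ satisfies the $\Delta_2$ condition, since $\Phi(2t)\le 2\Phi(t)\cdot\sup_t\frac{\log(e+2t)}{\log(e+t)}\le4\Phi(t)$. Hence $\|x-x_N\|_{L\log L}\le c$ for $N$ large. As $c$ is arbitrary, bounded selfadjoint elements are dense among selfadjoint elements.

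\emph{Stage two: bounded to Fourier polynomials.} For $y=y^*\in L_\infty$, use the inclusion $L_2\subset L\log L$. Since $\Phi(t)\le C(t+t^2)$, the Luxemburg norm is dominated by a multiple of $\|\cdot\|_2$ on a probability space: $\tau(\Phi(|z|/c))\le C(\|z\|_1/c+\|z\|_2^2/c^2)\le 1$ once $c\gtrsim\|z\|_2$. By Parseval \eqref{eq:prelim-parseval}, the truncated Fourier series $y_M=\sum_{|n|\le M}\widehat y(n)U^n$ converges to $y$ in $L_2$, hence in $L\log L$.

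Selfadjointness is preserved. From $(U^n)^*=c_nU^{-n}$ with $|c_n|=1$, we get $\widehat{y^*}$ from $\widehat y$ via $n\mapsto -n$, and the ball $\{|n|\le M\}$ is symmetric. So $y_M^*$ is the truncation of $y^*=y$, that is, $y_M^*=y_M$. Alternatively, replace $y_M$ by $(y_M+y_M^*)/2$, which is no farther from $y$ by the triangle inequality together with $\|z^*\|_{L\log L}=\|z\|_{L\log L}$.

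\emph{Main obstacle.} The only delicate point is the convergence in stage one. The Luxemburg norm convergence needs the $\Delta_2$ property, so that $\tau(\Phi(x/c))<\infty$ holds for every $c$ and not just some $c$. With that in hand, the rest is routine.
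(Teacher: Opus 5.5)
Your proof is correct, and it differs from the paper's mainly in stage two.

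\textbf{Stage one.} This is essentially the paper's argument. The paper truncates $x=v|x|$ directly as $v|x|\mathbf 1_{[0,M]}(|x|)$ instead of splitting into real and imaginary parts and then positive and negative parts. The mechanism is the same: $\Phi(2t)\le4\Phi(t)$ makes the modular finite at every scale, and convergence in the spectral calculus finishes.

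\textbf{Stage two.} Here the routes genuinely differ.
\begin{itemize}
\item The paper approximates a bounded $y$ by product Fej\'er means $F_Ny$. Each $F_N$ averages the torus action against a nonnegative kernel of integral one, so $\|F_Ny\|_\infty\le\|y\|_\infty$. The paper then converts $L_1$-convergence into modular convergence via $\tau(\Phi(|F_Ny-y|/\varepsilon))\le\varepsilon^{-1}\log(e+2\|y\|_\infty/\varepsilon)\|F_Ny-y\|_1$.
\item You use sharp partial sums over the symmetric balls $\{|n|\le M\}$ together with the continuous embedding $L_2\hookrightarrow L\log L$, which follows from $\Phi(t)\le t+t^2$. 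You need only Parseval, and you use boundedness of $y$ only to ensure $y\in L_2$. Selfadjointness comes from the symmetry $n\mapsto-n$ of the ball, not from $F_N$ commuting with adjoints.
\end{itemize}

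\textbf{Comparison.} Your route is shorter. The paper's route additionally gives approximants with the uniform operator-norm bound $\|F_Ny\|_\infty\le\|y\|_\infty$. By positivity of the kernel, it also gives positive approximants of positive $y$. The lemma as stated does not need either property.
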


\begin{proof}
Let $x\in L\log L(\mathbb T_\theta^d)$.
Since $\Phi(2t)\leq4\Phi(t)$, we have
$\tau(\Phi(\frac{|x|}{\varepsilon}))<\infty$
for every $\varepsilon>0$.
For the polar decomposition $x=v|x|$, set
\[
 x^{(M)}=v|x|\mathbf1_{[0,M]}(|x|).
\]
Dominated convergence in the spectral calculus of $|x|$ gives
\[
 \tau\Bigl(\Phi\Bigl(\frac{|x-x^{(M)}|}{\varepsilon}\Bigr)\Bigr)\longrightarrow0.
\]
Thus $x^{(M)}\to x$ in the Luxemburg norm.

For bounded $y$, define its product Fej\'er means by
\begin{equation}
\label{eq:ow-fejer-means}
 \begin{aligned}
 w_N(n)&=\prod_{k=1}^d\Bigl(1-\frac{|n_k|}{N+1}\Bigr)_+,\\
 F_Ny&=\sum_{n\in\mathbb Z^d}w_N(n)\widehat y(n)U^n.
 \end{aligned}
\end{equation}
See \cite[Section~3]{CXY13}.
Each $F_N$ averages the canonical torus action against a nonnegative
kernel of integral one. Consequently,
$\|F_Ny\|_\infty\leq\|y\|_\infty$, and $F_N$ preserves adjoints.
Since $y$ is bounded and $\tau(1)=1$, we have $y\in H$.
Parseval's identity~\eqref{eq:prelim-parseval} gives
\begin{equation}
\label{eq:ow-fejer-l2-convergence}
 \|F_Ny-y\|_2^2
 =\sum_{n\in\mathbb Z^d}|w_N(n)-1|^2|\widehat y(n)|^2
 \longrightarrow0.
\end{equation}
The limit follows from dominated convergence:
$0\leq w_N(n)\leq1$ and $w_N(n)\to1$ for each $n$.
See also \cite[Proposition~3.1]{CXY13}.
Since $\|F_Ny-y\|_1\leq\|F_Ny-y\|_2$,
we also have $\|F_Ny-y\|_1\to0$.
For every $\varepsilon>0$,
\[
 \tau\Bigl(\Phi\Bigl(\frac{|F_Ny-y|}{\varepsilon}\Bigr)\Bigr)
 \leq\frac{1}{\varepsilon}
       \log\Bigl(e+\frac{2\|y\|_\infty}{\varepsilon}\Bigr)
       \|F_Ny-y\|_1\longrightarrow0.
\]
It follows that $F_Ny\to y$ in $L\log L(\mathbb T_\theta^d)$.
Applying the Fej\'er approximation to $x^{(M)}$ proves density.
Both the truncation and the Fej\'er means preserve selfadjointness.
\end{proof}

\subsection{Weyl's laws with Orlicz potentials}

\begin{theorem}
\label{thm:ow-sandwich-weyl}
Let $S,T\in\Pi$, $x\in L\log L(\mathbb T_\theta^d)$, and
$R\in(\mathcal{L}_{1,\infty}(H))_0$. Set
\begin{equation}
\label{eq:ow-operator-and-symbol}
 \begin{aligned}
 K&=S(1-\Delta)^{-\frac{d}{4}}\lambda(x)(1-\Delta)^{-\frac{d}{4}}T+R,\\
 a(s)&=\operatorname{sym}(S)(s)x\operatorname{sym}(T)(s).
 \end{aligned}
\end{equation}
Then $K\in\mathcal{L}_{1,\infty}(H)$,
$a\in C(\mathbb S^{d-1};L_1(\mathbb T_\theta^d))$, and
\begin{equation}
\label{eq:ow-sandwich-singular}
 \lim_{j\to\infty}(j+1)\mu(j,K)
 =\frac{1}{d}\int_{\mathbb S^{d-1}}\tau(|a(s)|)\,ds.
\end{equation}
Suppose $K_1$ and $K_2$ have representations \eqref{eq:ow-operator-and-symbol},
with functions $a_1$ and $a_2$.
If $K_1-K_2\in(\mathcal L_{1,\infty}(H))_0$,
then $a_1=a_2$ almost everywhere.
We call $a$ the generalized principal symbol of order $-d$
of $K$ on $\mathbb S^{d-1}$.
If $K=K^*$, then $a=a^*$ almost everywhere and
\begin{equation}
\label{eq:ow-sandwich-signed}
 \lim_{j\to\infty}(j+1)\lambda_j^\pm(K)
 =\frac{1}{d}\int_{\mathbb S^{d-1}}\tau(a(s)_\pm)\,ds.
\end{equation}

The same conclusions hold for
\[
 K=S(-\Delta)^{-\frac{d}{4}}
 \lambda(x)(-\Delta)^{-\frac{d}{4}}T+R,
\]
with the same generalized principal symbol $a$.
Here $(-\Delta)^{-\frac{d}{4}}$ is the generalized negative power,
defined to vanish on the constant subspace.
\end{theorem}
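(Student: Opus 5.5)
The plan is to reduce to bounded (indeed Fourier-polynomial) potentials, where Proposition~\ref{prop:ow-bounded-input} applies, and then pass to $L\log L$ by the continuity estimate \eqref{eq:ow-endpoint-continuity} together with the Birman--Solomyak stability Lemmas~\ref{lem:weak-weyl-stability}, \ref{lem:weak-ideal-approximation} and \ref{lem:ow-signed-stability}.

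\emph{Membership and the symbol.} Membership $K\in\mathcal L_{1,\infty}(H)$ follows from Theorem~\ref{thm:torus-endpoint} and the ideal property \eqref{eq:prelim-ideal-property}. For $a$, write $\sigma_S=\operatorname{sym}(S)$ and $\sigma_T=\operatorname{sym}(T)$, both in $C(\mathbb S^{d-1};C(\mathbb T_\theta^d))$. The bimodule bound $\|byc\|_{L\Phi}\le\|b\|_\infty\|y\|_{L\Phi}\|c\|_\infty$ gives continuity of $s\mapsto a(s)$ into $L\log L$, and hence into $L_1$. It also gives
\[
\|a_x-a_y\|_{C(\mathbb S^{d-1};L_1)}\le\|S\|\,\|T\|\,\|x-y\|_{L\log L}.
\]

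\emph{Bounded potentials.} Take $x=y\in C(\mathbb T_\theta^d)$, for instance a Fourier polynomial. Then $\lambda(y)\in\Pi$. By Proposition~\ref{prop:ow-bounded-input}, $[(1-\Delta)^{-d/4},A]\in(\mathcal L_{2,\infty})_0$ for $A\in\Pi$. Commuting $S$ and $\lambda(y)$ past $(1-\Delta)^{-d/4}$ and using \eqref{eq:prelim-small-product} gives
\[
S(1-\Delta)^{-d/4}\lambda(y)(1-\Delta)^{-d/4}T\in S\lambda(y)T(1-\Delta)^{-d/2}+(\mathcal L_{1,\infty})_0 .
\]
The first part of Proposition~\ref{prop:ow-bounded-input}, applied to $A=S\lambda(y)T$ with $\operatorname{sym}(A)=a$, together with Lemma~\ref{lem:weak-weyl-stability} to absorb the $(\mathcal L_{1,\infty})_0$ error and $R$, yields \eqref{eq:ow-sandwich-singular}.

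\emph{General $x$.} By Lemma~\ref{lem:ow-smooth-density}, choose Fourier polynomials $y_n\to x$ in $L\log L$. Then \eqref{eq:ow-endpoint-continuity} gives $K_n\to K$ in $\mathcal L_{1,\infty}$. Also $\int\tau(|a_{y_n}|)\to\int\tau(|a_x|)$ by the $L_1$ bound above. Lemma~\ref{lem:weak-ideal-approximation} then gives \eqref{eq:ow-sandwich-singular}.

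\emph{The Riesz version.} $(-\Delta)^{-d/4}-(1-\Delta)^{-d/4}$ has diagonal symbol $O(|n|^{-d/2-2})$, so it lies in $(\mathcal L_{2,\infty})_0$. To absorb it, one needs the mixed term $\lambda(x^{1/2})\cdot(\text{this difference})$ to lie in $(\mathcal L_{2,\infty})_0$. Because $x$ is unbounded, this is not just the ideal property. Instead, apply Theorem~\ref{thm:abstract-endpoint} with diagonal $g$ whose weak-$\ell_2$ tail is small: truncate $g$ to finitely many modes plus a remainder of small $\ell_{2,\infty}$ norm. The finitely many modes give a finite-rank piece, and the remainder is controlled by the theorem. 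This shows that $\lambda(f)G\in(\mathcal L_{2,\infty})_0$ whenever $g\in(\ell_{2,\infty})_0$. Together with the polar factorization used in Theorem~\ref{thm:torus-endpoint}, the Riesz sandwich therefore differs from the Bessel one by an element of $(\mathcal L_{1,\infty})_0$.

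\emph{Signed case.} If $K=K^*$, replace $K$ by $\tfrac12(K+K^*)$. The adjoint of $S(\cdot)T$ is $T^*(\cdot)S^*$ with symbol $a^*$, and the difference $K-K^*$ lies in $(\mathcal L_{1,\infty})_0$. The uniqueness statement below then gives $a=a^*$ almost everywhere.

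The hard part is the signed asymptotics for a general pair $(S,T)$, since Proposition~\ref{prop:ow-bounded-input} only treats symmetric sandwiches $(1-\Delta)^{-d/4}A(1-\Delta)^{-d/4}$. For bounded selfadjoint $y$, I would again move $S$ and $T$ inside modulo $(\mathcal L_{1,\infty})_0$. This writes $\tfrac12(K+K^*)$ as $(1-\Delta)^{-d/4}A(1-\Delta)^{-d/4}$ plus a $(\mathcal L_{1,\infty})_0$ error, with $A=\tfrac12(S\lambda(y)T+T^*\lambda(y)S^*)$. Then apply Proposition~\ref{prop:ow-bounded-input} and Lemma~\ref{lem:ow-signed-stability}. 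The approximation step works as before: selfadjoint approximants give selfadjoint $K_n$, and $\tau(a_n(s)_\pm)\to\tau(a(s)_\pm)$ because $z\mapsto z_\pm$ is $1$-Lipschitz in $L_1$.

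\emph{Uniqueness.} This is the remaining delicate point. If $K_1-K_2\in(\mathcal L_{1,\infty})_0$, then for every bounded operator $\lambda(b)\pi(h)\in\Pi$ the products $\lambda(b)\pi(h)K_i$ are again of the form \eqref{eq:ow-operator-and-symbol}, with symbols $b\,h\,a_i$. Their difference is still in $(\mathcal L_{1,\infty})_0$. Applying the trace formula coming from these asymptotics with a continuous normalized trace $\varphi$ (alternatively, linearity of Dixmier-type functionals) gives
\[
\int_{\mathbb S^{d-1}} h(s)\,\tau\bigl(b\,(a_1-a_2)(s)\bigr)\,ds=0 \quad\text{for all } b,h.
\]
Since $a_1-a_2$ is continuous into $L_1$, this forces $a_1=a_2$. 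I expect this step, namely deriving a linear trace identity from the asymptotics for nonpositive $K$, to be the main obstacle. I would handle it by decomposing into selfadjoint parts and using the signed formula, so that $\varphi(K)=\tfrac1d\int\tau(a)$.
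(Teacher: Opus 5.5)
Your overall architecture is the paper's: the normal form $S\lambda(y)T(1-\Delta)^{-d/2}$ modulo $(\mathcal L_{1,\infty})_0$ for Fourier-polynomial potentials, then Proposition~\ref{prop:ow-bounded-input}, the endpoint continuity estimate, and the Birman--Solomyak lemmas. You diverge at two points, and your signed-case step has two errors that need fixing.

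\textbf{Uniqueness.} The paper avoids traces entirely. The normal form is linear in the data, so $K_{1,n}-K_{2,n}\equiv(A_{1,n}-A_{2,n})(1-\Delta)^{-d/2}$ modulo $(\mathcal L_{1,\infty})_0$. The singular-value asymptotics you have already proved, applied to this difference together with Lemma~\ref{lem:weak-ideal-approximation}, give $\frac1d\int_{\mathbb S^{d-1}}\tau(|a_1(s)-a_2(s)|)\,ds=\lim_j(j+1)\mu(j,K_1-K_2)=0$. The step you flagged as the main obstacle therefore disappears.

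Your test-function argument, with $\lambda(b)\pi(h)K_i$ and a continuous normalized trace, is also valid. It yields the integration formula of Theorem~\ref{thm:ow-connes} as a by-product. However, the linear trace identity must be established first and without circularity. Derive it from the asymptotics of $\operatorname{Re}K$ and $\operatorname{Im}K$ in terms of $\operatorname{Re}a$ and $\operatorname{Im}a$, or from the approximants and continuity of $\varphi$ as in the proof of Theorem~\ref{thm:ow-connes}. Do not derive it from the signed formula in the form $\tau(a_\pm)$: that form presupposes $a=a^*$, which itself comes from uniqueness.

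\textbf{Riesz version.} The paper writes $(-\Delta)^{-d/4}=B(1-\Delta)^{-d/4}$, where $B=(1-\Delta)^{d/4}(-\Delta)^{-d/4}$ is bounded and $B-1$ is compact. The Riesz sandwich is then $B(\cdot)B$ applied to the Bessel one. The difference is a compact operator times an element of $\mathcal L_{1,\infty}$, hence lies in $(\mathcal L_{1,\infty})_0$. Your little-$o$ version of Theorem~\ref{thm:abstract-endpoint} for $g\in(\ell_{2,\infty})_0$ is correct but unnecessary. Your difference operator equals $(B-1)(1-\Delta)^{-d/4}$. Placing the compact diagonal factor on the right gives $\lambda(|x|^{1/2})(1-\Delta)^{-d/4}(B-1)\in(\mathcal L_{2,\infty})_0$ directly, even though $x$ is unbounded.

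\textbf{Signed case: two errors.} First, $K=K^*$ does not force $x=x^*$. Take $S=i$, $T=1$, $x=i$, $R=0$: then $K=-(1-\Delta)^{-d/2}$ is selfadjoint but $x$ is not. So you cannot approximate by selfadjoint $y_n$. Moreover, the $K_n$ are not selfadjoint in general even when $x=x^*$. The fix, which is what the paper does:
\begin{itemize}
\item use arbitrary Fourier polynomials $y_n\to x$;
\item replace $K_n$ by $\operatorname{Re}K_n$, which still converges to $K$;
\item take $A_n=\frac12\bigl(S\lambda(y_n)T+T^*\lambda(y_n^*)S^*\bigr)$, whose symbol $\operatorname{Re}a_n$ tends to $\operatorname{Re}a=a$.
\end{itemize}

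Second, $z\mapsto z_\pm$ is not Lipschitz on noncommutative $L_1$ in general, because the absolute value is not trace-norm Lipschitz. That justification is therefore wrong. You only need the scalar bound $|\tau(b_\pm)-\tau(c_\pm)|\le\|b-c\|_1$. It follows from $\tau(b_\pm)=\frac12(\|b\|_1\pm\tau(b))$, as in \eqref{eq:ow-traced-positive-parts}.
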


\begin{proof}
Membership of $K$ in $\mathcal L_{1,\infty}$ follows from
Theorem~\ref{thm:torus-endpoint} and the ideal property
\eqref{eq:prelim-ideal-property}.
Both $\operatorname{sym}(S)$ and $\operatorname{sym}(T)$
are norm-continuous and bounded.
Since $x\in L_1(\mathbb T_\theta^d)$, the function $a$ is $L_1$-norm continuous, and
\[
 \int_{\mathbb S^{d-1}}\tau(|a(s)|)\,ds
 \leq |\mathbb S^{d-1}|\,\|S\|\,\|x\|_1\,\|T\|.
\]

Choose Fourier polynomials $x_n\to x$ in $L\log L(\mathbb T_\theta^d)$, and put
\[
 \begin{aligned}
 K_n&=S(1-\Delta)^{-\frac{d}{4}}\lambda(x_n)(1-\Delta)^{-\frac{d}{4}}T+R,\\
 A_n&=S\lambda(x_n)T\in\Pi,
 \qquad a_n=\operatorname{sym}(A_n).
 \end{aligned}
\]
By \eqref{eq:prelim-ideal-property} and \eqref{eq:ow-endpoint-continuity},
\[
\begin{aligned}
 \|K_n-K\|_{1,\infty}
 &\leq \|S\|\,\|T\|\,
 \bigl\|(1-\Delta)^{-d/4}\lambda(x_n-x)
                     (1-\Delta)^{-d/4}\bigr\|_{1,\infty}\\
 &\leq C_d\|S\|\,\|T\|\,
       \|x_n-x\|_{L\log L(\mathbb T_\theta^d)}
 \longrightarrow0.
\end{aligned}
\]
 Moreover,
\begin{equation}
\label{eq:ow-symbol-convergence}
 \int_{\mathbb S^{d-1}}\tau(|a_n(s)-a(s)|)\,ds
 \leq |\mathbb S^{d-1}|\,\|S\|\,\|T\|\,\|x_n-x\|_1
 \longrightarrow0.
\end{equation}

For $y\in\mathcal S(\mathbb T_\theta^d)$,
\[
 \begin{aligned}
 &S(1-\Delta)^{-\frac{d}{4}}\lambda(y)(1-\Delta)^{-\frac{d}{4}}T-S\lambda(y)T(1-\Delta)^{-\frac{d}{2}}\\
 &\qquad=S[(1-\Delta)^{-\frac{d}{4}},\lambda(y)](1-\Delta)^{-\frac{d}{4}}T
          +S\lambda(y)[(1-\Delta)^{-\frac{d}{2}},T].
 \end{aligned}
\]
By the commutator inclusion in
Proposition~\ref{prop:ow-bounded-input} and
\eqref{eq:prelim-small-product},
\[
 [(1-\Delta)^{-\frac{d}{4}},\lambda(y)](1-\Delta)^{-\frac{d}{4}}\in(\mathcal{L}_{1,\infty}(H))_0.
\]
Similarly, expanding the commutator gives
\[
 \begin{aligned}
 \bigl[(1-\Delta)^{-\frac{d}{2}},T\bigr]
 &=(1-\Delta)^{-\frac{d}{4}}[(1-\Delta)^{-\frac{d}{4}},T]+[(1-\Delta)^{-\frac{d}{4}},T](1-\Delta)^{-\frac{d}{4}}\\
 &\in(\mathcal{L}_{1,\infty}(H))_0.
 \end{aligned}
\]
Therefore
\begin{equation}
\label{eq:ow-normal-form}
 K_n-A_n(1-\Delta)^{-\frac{d}{2}}\in(\mathcal{L}_{1,\infty}(H))_0.
\end{equation}
Proposition~\ref{prop:ow-bounded-input} and
Lemma~\ref{lem:weak-weyl-stability} give
\[
 \lim_{j\to\infty}(j+1)\mu(j,K_n)
 =\frac{1}{d}\int_{\mathbb S^{d-1}}\tau(|a_n(s)|)\,ds.
\]
By \eqref{eq:ow-symbol-convergence}, the right-hand side converges to
$\frac{1}{d}\int_{\mathbb S^{d-1}}\tau(|a(s)|)\,ds$.
Lemma~\ref{lem:weak-ideal-approximation} now gives
\eqref{eq:ow-sandwich-singular}.

For each of the two representations, approximate the potential by Fourier
polynomials and construct operators $K_{i,n}$ and
$A_{i,n}\in\Pi$, $i=1,2$.
Set
\[
 a_{i,n}=\operatorname{sym}(A_{i,n}),\qquad i=1,2.
\]
Applying \eqref{eq:ow-normal-form} to both representations gives
\[
 K_{1,n}-K_{2,n}
 -(A_{1,n}-A_{2,n})(1-\Delta)^{-d/2}
 \in(\mathcal L_{1,\infty}(H))_0.
\]
Proposition~\ref{prop:ow-bounded-input} and
Lemma~\ref{lem:weak-weyl-stability} therefore give
\[
 \lim_{j\to\infty}(j+1)\mu(j,K_{1,n}-K_{2,n})
 =\frac1d\int_{\mathbb S^{d-1}}\tau(|a_{1,n}(s)-a_{2,n}(s)|)\,ds.
\]
By \eqref{eq:ow-symbol-convergence}, the right-hand side converges to
$\frac1d\int_{\mathbb S^{d-1}}\tau(|a_1(s)-a_2(s)|)\,ds$.
Since $K_{1,n}-K_{2,n}\to K_1-K_2$ in $\mathcal L_{1,\infty}$,
Lemma~\ref{lem:weak-ideal-approximation} gives
\[
 \frac1d\int_{\mathbb S^{d-1}}\tau(|a_1(s)-a_2(s)|)\,ds
 =\lim_{j\to\infty}(j+1)\mu(j,K_1-K_2)=0.
\]
Hence $a_1=a_2$ almost everywhere.
Taking adjoints in \eqref{eq:ow-operator-and-symbol} gives
\[
 K^*=T^*(1-\Delta)^{-d/4}\lambda(x^*)
                  (1-\Delta)^{-d/4}S^*+R^*.
\]
The symbol map preserves adjoints, so the associated function is
\[
 \operatorname{sym}(T^*)(s)x^*
 \operatorname{sym}(S^*)(s)=a(s)^*.
\]
Uniqueness therefore implies $a=a^*$ almost everywhere
when $K=K^*$.

Suppose now that $K=K^*$. Then
$\|\operatorname{Re}K_n-K\|_{1,\infty}\to0$. Since
\[
 \begin{aligned}
 A_n(1-\Delta)^{-\frac{d}{2}}-(1-\Delta)^{-\frac{d}{4}}A_n(1-\Delta)^{-\frac{d}{4}}
 &=[A_n,(1-\Delta)^{-\frac{d}{4}}](1-\Delta)^{-\frac{d}{4}}\\
 &\in(\mathcal{L}_{1,\infty}(H))_0,
 \end{aligned}
\]
equation~\eqref{eq:ow-normal-form} implies
\begin{equation}
\label{eq:ow-selfadjoint-normal-form}
 \operatorname{Re}K_n-(1-\Delta)^{-\frac d4}
   (\operatorname{Re}A_n)(1-\Delta)^{-\frac d4}
 \in(\mathcal L_{1,\infty}(H))_0.
\end{equation}
The difference in \eqref{eq:ow-selfadjoint-normal-form} is selfadjoint.
Thus the eigenvalue formulas in
Proposition~\ref{prop:ow-bounded-input} and
Lemma~\ref{lem:ow-signed-stability} yield
\[
 \lim_{j\to\infty}(j+1)\lambda_j^\pm(\operatorname{Re}K_n)
 =\frac{1}{d}\int_{\mathbb S^{d-1}}
       \tau\bigl((\operatorname{Re}a_n(s))_\pm\bigr)\,ds.
\]
For selfadjoint $b,c\in L_1(\mathbb T_\theta^d)$,
\begin{equation}
\label{eq:ow-traced-positive-parts}
 \tau(b_\pm)=\frac{\|b\|_1\pm\tau(b)}{2},
 \qquad |\tau(b_\pm)-\tau(c_\pm)|\leq\|b-c\|_1.
\end{equation}
Equations~\eqref{eq:ow-symbol-convergence} and
\eqref{eq:ow-traced-positive-parts} give
\[
\int_{\mathbb S^{d-1}}\tau((\operatorname{Re}a_n(s))_\pm)\,ds
\longrightarrow
\int_{\mathbb S^{d-1}}\tau(a(s)_\pm)\,ds.
\]
Applying Lemma~\ref{lem:ow-signed-stability} once more, we obtain
\eqref{eq:ow-sandwich-signed}.

To obtain the Riesz-potential version, consider the bounded
selfadjoint Fourier multiplier
\[
 B=(1-\Delta)^{\frac{d}{4}}(-\Delta)^{-\frac{d}{4}}.
\]
Its multiplier is zero at $n=0$ and equals
\[
 \Bigl(\frac{1+|n|^2}{|n|^2}\Bigr)^{\frac{d}{4}},
 \qquad n\neq0.
\]
Thus $\|B\|\leq2^{\frac{d}{4}}$ and $B-1$ is compact.
Since $B$ commutes with the Bessel potential, the form identity
\begin{equation}
\label{eq:ow-riesz-bessel-form}
 (-\Delta)^{-\frac{d}{4}}
   \lambda(x)(-\Delta)^{-\frac{d}{4}}
 =B(1-\Delta)^{-\frac{d}{4}}
   \lambda(x)(1-\Delta)^{-\frac{d}{4}}B
\end{equation}
and Theorem~\ref{thm:torus-endpoint} give a bounded extension with
\[
 \bigl\|(-\Delta)^{-\frac{d}{4}}
          \lambda(x)(-\Delta)^{-\frac{d}{4}}\bigr\|_{1,\infty}
 \leq2^{\frac{d}{2}}C_d\|x\|_{L\log L(\mathbb T_\theta^d)}.
\]
Multiplication on either side by a compact operator maps
$\mathcal{L}_{1,\infty}(H)$ into
$(\mathcal{L}_{1,\infty}(H))_0$, as follows by finite-rank
approximation of the compact factor in operator norm.
Expanding $B=1+(B-1)$ in \eqref{eq:ow-riesz-bessel-form} gives
\begin{equation}
\label{eq:ow-riesz-bessel-difference}
 \begin{aligned}
&(-\Delta)^{-\frac{d}{4}}
   \lambda(x)(-\Delta)^{-\frac{d}{4}}
 -(1-\Delta)^{-\frac{d}{4}}
   \lambda(x)(1-\Delta)^{-\frac{d}{4}}\\
 =&(B-1)(1-\Delta)^{-\frac{d}{4}}
   \lambda(x)(1-\Delta)^{-\frac{d}{4}}B+(1-\Delta)^{-\frac{d}{4}}
   \lambda(x)(1-\Delta)^{-\frac{d}{4}}(B-1)\\
 \in &(\mathcal L_{1,\infty}(H))_0.
 \end{aligned}
\end{equation}

Let
\[
 K'=S(-\Delta)^{-\frac d4}\lambda(x)
             (-\Delta)^{-\frac d4}T+R.
\]
Bounded left and right multiplication preserve
$(\mathcal L_{1,\infty}(H))_0$ by
\eqref{eq:prelim-ideal-property} and its definition as the
$\mathcal L_{1,\infty}$-closure of the finite-rank operators.
Thus \eqref{eq:ow-riesz-bessel-difference} gives
\[
\begin{aligned}
 &K'-S(1-\Delta)^{-\frac d4}\lambda(x)
                         (1-\Delta)^{-\frac d4}T\\
 =&R+S\Bigl[
       (-\Delta)^{-\frac d4}\lambda(x)(-\Delta)^{-\frac d4}
       -(1-\Delta)^{-\frac d4}\lambda(x)
                         (1-\Delta)^{-\frac d4}
       \Bigr]T\\
 \in &(\mathcal L_{1,\infty}(H))_0.
\end{aligned}
\]
The generalized principal symbol of $ K'$ is still $a$.
The Bessel-potential case gives \eqref{eq:ow-sandwich-singular},
\eqref{eq:ow-sandwich-signed}, and uniqueness of the generalized
principal symbol.
\end{proof}

The homogeneous extension of the generalized principal symbol is
\begin{equation}
\label{eq:ow-generalized-symbol-extension}
 \xi\longmapsto|\xi|^{-d}a(\tfrac{\xi}{|\xi|}),\qquad \xi\ne0.
\end{equation}
For a classical pseudodifferential operator $K\in\mathrm\Psi_{\mathrm{cl}}^{-d}(\mathbb T_\theta^d)$,
take $S=K(1-\Delta)^{d/2}$, $T=1$, $x=1$ and $R=0$
in \eqref{eq:ow-operator-and-symbol}.
Equation~\eqref{eq:prelim-classical-factorization} gives
$a(s)=\sigma(K)_{-d}(s)$.
Thus the generalized principal symbol agrees with the classical principal symbol
restricted to the sphere.

\begin{corollary}
\label{cor:ow-classical-factors}
Let $P,Q\in\mathrm \Psi_{\mathrm{cl}}^{-\frac{d}{2}}(\mathbb T_\theta^d)$ and $x\in L\log L(\mathbb T_\theta^d)$.
For unbounded $x$, let $Q\lambda(x)P$ denote the unique bounded
operator representing the form
\begin{equation}
\label{eq:ow-classical-form}
 (\xi,\eta)\longmapsto\tau\bigl((Q^*\eta)^*xP\xi\bigr),
 \qquad \xi,\eta\in \mathcal{S}(\mathbb T_\theta^d).
\end{equation}
This operator exists and belongs to $\mathcal{L}_{1,\infty}(H)$.
Writing $p(s)=\sigma(P)_{-\frac{d}{2}}(s)$ and
$q(s)=\sigma(Q)_{-\frac{d}{2}}(s)$, we have
\begin{equation*}
 \lim_{j\to\infty}(j+1)\mu(j,Q\lambda(x)P)
 =\frac{1}{d}\int_{\mathbb S^{d-1}}\tau(|q(s)xp(s)|)\,ds.
\end{equation*}
If $x=x^*$, then
\begin{equation*}
 \lim_{j\to\infty}(j+1)\lambda_j^\pm(P^*\lambda(x)P)
 =\frac{1}{d}\int_{\mathbb S^{d-1}}
       \tau\bigl((p(s)^*xp(s))_\pm\bigr)\,ds.
\end{equation*}
\end{corollary}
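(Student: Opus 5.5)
The plan is to reduce Corollary~\ref{cor:ow-classical-factors} to Theorem~\ref{thm:ow-sandwich-weyl} via the factorization \eqref{eq:prelim-classical-factorization} with $m=d/2$. Put
\[
 T=(1-\Delta)^{\frac d4}P\in\Pi,\qquad S=Q(1-\Delta)^{\frac d4}\in\Pi,
\]
so that $P=(1-\Delta)^{-d/4}T$ and $Q=S(1-\Delta)^{-d/4}$. By \eqref{eq:prelim-classical-factorization}, $\operatorname{sym}(T)(s)=p(s)$ and $\operatorname{sym}(S)(s)=q(s)$. The candidate operator is
\[
 K=S(1-\Delta)^{-\frac d4}\lambda(x)(1-\Delta)^{-\frac d4}T,
\]
with $R=0$. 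By Theorem~\ref{thm:torus-endpoint} and \eqref{eq:prelim-ideal-property}, $K$ is bounded and lies in $\mathcal L_{1,\infty}(H)$. Its generalized principal symbol is $a(s)=q(s)xp(s)$, so the singular value law is exactly \eqref{eq:ow-sandwich-singular}.

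The main point to check is that $K$ represents the form \eqref{eq:ow-classical-form}. Take $\xi,\eta\in\mathcal S(\mathbb T_\theta^d)$. Since $T$ is a classical order-zero operator, $T\xi\in\mathcal S$. For $\eta$ we use $S^*\eta=(1-\Delta)^{d/4}Q^*\eta$. Here $Q^*$ is the formal adjoint, which is again classical of order $-d/2$ and preserves $\mathcal S$, so $S^*\eta\in\mathcal S$. Now apply \eqref{eq:torus-sandwich-form}, extended to $\mathcal S$ as in the proof of Corollary~\ref{cor:pdo-sandwich-endpoint}:
\[
 \langle K\xi,\eta\rangle=\tau\bigl(((1-\Delta)^{-\frac d4}S^*\eta)^*\,x\,(1-\Delta)^{-\frac d4}T\xi\bigr)=\tau\bigl((Q^*\eta)^*xP\xi\bigr).
\]
This gives existence of the operator. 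Uniqueness follows because $\mathcal S$ is dense in $H$. The delicate part is identifying the bounded extension $S$ with the adjoint relation on smooth elements; this should follow since all the operators involved act on $\mathcal S$ and agree there.

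For the eigenvalue statement, take $Q=P^*$, which is classical of order $-d/2$ with principal symbol $q=p^*$ by the adjoint formula. With $x=x^*$ and $S=T^*$, the form \eqref{eq:ow-classical-form} is Hermitian, so $K=K^*$. Then \eqref{eq:ow-sandwich-signed} applies with $a(s)=p(s)^*xp(s)$ and gives the signed asymptotics.
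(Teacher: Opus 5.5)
Your proposal is correct and follows essentially the same route as the paper. You factor $P=(1-\Delta)^{-d/4}T$ and $Q=S(1-\Delta)^{-d/4}$ with $S,T\in\Pi$ via \eqref{eq:prelim-classical-factorization}, and check that $S(1-\Delta)^{-d/4}\lambda(x)(1-\Delta)^{-d/4}T$ represents the form by extending \eqref{eq:torus-sandwich-form} to $\mathcal S$ and using that $T$ and $S^*$ preserve $\mathcal S$. You then apply Theorem~\ref{thm:ow-sandwich-weyl} with $R=0$, taking $S=T^*$ for the signed asymptotics. The step you flagged as delicate is a one-line density check: $\langle S\xi,\eta\rangle=\langle\xi,(1-\Delta)^{d/4}Q^*\eta\rangle$ for all $\xi,\eta\in\mathcal S$, so $S^*\eta=(1-\Delta)^{d/4}Q^*\eta$.
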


\begin{proof}
By \eqref{eq:prelim-classical-factorization}, the bounded extensions of
\[
 S=Q(1-\Delta)^{\frac{d}{4}},\qquad T=(1-\Delta)^{\frac{d}{4}}P
\]
belong to $\Pi$, with $\operatorname{sym}(S)=q$ and
$\operatorname{sym}(T)=p$. They satisfy $Q=S(1-\Delta)^{-\frac{d}{4}}$ and $P=(1-\Delta)^{-\frac{d}{4}}T$.
Since $x\in L_1(\mathbb T_\theta^d)$,
\[
|\tau((Q^*\eta)^*xP\xi)|
\leq\|x\|_1\|Q^*\eta\|_\infty\|P\xi\|_\infty.
\]

Since $P,Q^*$ act continuously on $\mathcal S(\mathbb T_\theta^d)$,
the form in \eqref{eq:ow-classical-form} is continuous for
the topology defined by \eqref{eq:prelim-smooth-seminorms}.
By \eqref{eq:prelim-smooth-fourier-density}, the identity
\eqref{eq:torus-sandwich-form} extends from Fourier polynomials to
$\mathcal S(\mathbb T_\theta^d)$.
The operators $S^*,T$ preserve $\mathcal S(\mathbb T_\theta^d)$. Hence
\[
 S(1-\Delta)^{-\frac d4}\lambda(x)(1-\Delta)^{-\frac d4}T
\]
represents the form in \eqref{eq:ow-classical-form}.
Theorem~\ref{thm:ow-sandwich-weyl} with $R=0$ gives
the singular-value asymptotics.
When $Q=P^*$, we have $S=T^*$, and the eigenvalue formulas
in Theorem~\ref{thm:ow-sandwich-weyl} apply.
\end{proof}

\subsection{Connes' integration formulas}

Fix an orthonormal basis $\{e_j\}_{j\geq0}$ of $H$ and define
$T_0e_j=\frac{1}{j+1}e_j$.
A trace on $\mathcal{L}_{1,\infty}(H)$ is a linear functional $\varphi$
satisfying $\varphi(AB)=\varphi(BA)$ for
$A\in\mathcal{L}_{1,\infty}(H)$ and $B\in\mathcal B(H)$.
It is positive if $\varphi(A)\geq0$ for $A\geq0$, and normalized if
$\varphi(T_0)=1$; see \cite[Section~4, before Corollary~4.2]{SSUZ15}.
We call $\varphi$ continuous if there is a constant $C_\varphi$ such that
\begin{equation}
\label{eq:ow-trace-continuity}
 |\varphi(T)|\leq C_\varphi\|T\|_{1,\infty},
 \qquad T\in\mathcal L_{1,\infty}(H).
\end{equation}

We use the following definition from
\cite[Definition~7.1]{SSUZ15}.

\begin{definition}
\label{def:ow-pt-measurable}
\normalfont
An operator $T\in\mathcal L_{1,\infty}(H)$ is
$\mathcal{PT}$-measurable if there is $c_T\in\mathbb C$
such that
\[
 \varphi(T)=c_T
\]
for every positive normalized trace $\varphi$ on
$\mathcal L_{1,\infty}(H)$.
\end{definition}

This is also called strong measurability;
see \cite[Definition~4.1]{P23}.

\begin{lemma}
\label{lem:ow-continuous-traces}
Every positive normalized trace on $\mathcal L_{1,\infty}(H)$ is
continuous. Every continuous trace vanishes on
$(\mathcal L_{1,\infty}(H))_0$.
If $B\in\mathcal L_{1,\infty}(H)$ is positive and
$(j+1)\mu(j,B)\to c$, then every continuous normalized trace
$\varphi$ satisfies
\begin{equation}
\label{eq:ow-trace-from-limit}
 \varphi(B)=c.
\end{equation}
\end{lemma}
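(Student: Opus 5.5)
The lemma has three assertions, and I will prove them in order.

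\emph{Positive traces are continuous.} Let $\varphi$ be a positive normalized trace. For $0\le A\in\mathcal L_{1,\infty}(H)$, the singular values satisfy $\mu(j,A)\le\|A\|_{1,\infty}(j+1)^{-1}=\|A\|_{1,\infty}\mu(j,T_0)$. Choose a partial isometry $W$ mapping the eigenvectors of $A$ onto the basis $\{e_j\}$. Then $WAW^*$ is diagonal with entries $\mu(j,A)$, so
\[
WAW^*\le\|A\|_{1,\infty}T_0 .
\]
By the trace property, $\varphi(A)=\varphi(W^*WA)=\varphi(WAW^*)$, because $W^*W$ is the projection onto the closure of the range of $A$. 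Positivity then gives $0\le\varphi(A)\le\|A\|_{1,\infty}$. For a general $T$, decompose $T$ into four positive parts via real and imaginary parts and their $\pm$ parts. Each part has quasi-norm bounded by a constant times $\|T\|_{1,\infty}$: take adjoints, and use $\mu(j,B_\pm)\le\mu(j,B)$ for selfadjoint $B$ together with the quasi-triangle inequality. This yields \eqref{eq:ow-trace-continuity} with a universal constant.

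\emph{Continuous traces vanish on $(\mathcal L_{1,\infty}(H))_0$.} Every trace vanishes on finite-rank operators, by the standard argument. A rank-one operator $\xi\otimes\eta^*$ is a commutator, or a multiple of a rank-one projection, when $\xi\perp\eta$ fails. Rank-one projections $p,q$ are unitarily equivalent, so $\varphi(p)=\varphi(q)$. Since $H$ is infinite dimensional, a rank-one projection $p$ is also the sum of $n$-fold equivalent pieces in a suitable sense. A cleaner route is to use $\varphi(T_0)=\varphi(T_0-P_n T_0)$ together with $\varphi(p)=c$ for all rank-one $p$. Then $P_nT_0$ with $n$ large forces $c=0$: positivity or continuity bounds $\varphi(T_0-P_nT_0)$, while $\varphi(P_nT_0)=c\sum_{j<n}(j+1)^{-1}$ grows like $c\log n$. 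Given vanishing on finite-rank operators, continuity \eqref{eq:ow-trace-continuity} and the definition of $(\mathcal L_{1,\infty})_0$ as the closure of the finite-rank operators give vanishing on the whole subspace. Here one uses that the quasi-norm is continuous along approximating sequences: $|\varphi(T)|=|\varphi(T-F_k)|\le C_\varphi\|T-F_k\|_{1,\infty}\to0$.

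\emph{The trace formula \eqref{eq:ow-trace-from-limit}.} Let $B\ge0$ with $(j+1)\mu(j,B)\to c$. As above, $\varphi(B)=\varphi(D)$, where $D=WBW^*=\operatorname{diag}(\mu(j,B))$ in the basis $\{e_j\}$. Write
\[
D-cT_0=\operatorname{diag}\bigl(\mu(j,B)-c(j+1)^{-1}\bigr).
\]
Its entries are $o((j+1)^{-1})$. Its singular values are the decreasing rearrangement of the absolute values of these entries, so $D-cT_0\in(\mathcal L_{1,\infty}(H))_0$. To check this, given $\varepsilon$, the entries with $j\ge N$ are below $\varepsilon(j+1)^{-1}$. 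Truncating them off leaves a finite-rank operator, and the remainder is dominated by $\varepsilon T_0$, hence has quasi-norm at most $\varepsilon$. By the second assertion, $\varphi(D)=c\,\varphi(T_0)=c$.

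The main subtlety is the vanishing on finite-rank operators for a merely continuous, not necessarily positive, trace. I would handle it as follows. Normalization is not needed for this step. Unitary invariance, from $\varphi(UAU^*)=\varphi(A)$, makes $\varphi(p)=c$ independent of the rank-one projection $p$. Every finite-rank operator is a linear combination of rank-one projections and commutators $[A,F]$ with $F$ finite rank. Then
\[
c\log n\approx\varphi(P_nT_0)=\varphi(T_0)-\varphi\bigl((1-P_n)T_0\bigr),
\]
and the right side is bounded by $2C_\varphi\|T_0\|_{1,\infty}$. This forces $c=0$.
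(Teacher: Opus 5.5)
Your proof is correct. It has the same skeleton as the paper's: diagonalize $B$ to $D_B$, show $D_B-cT_0\in(\mathcal L_{1,\infty}(H))_0$ by the tail estimate, and use that continuous traces vanish on that subspace.

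The difference is in the three structural inputs. The paper cites \cite[Theorem~1.1.12]{LMSZ23} for them: (g) positive traces are continuous, (b) every trace vanishes on finite-rank operators, and (a) $\varphi(B)=\varphi(D_B)$. You prove each one directly:
\begin{itemize}
\item For (a), you use the partial isometry $W$ with $W^*W$ the range projection, together with the trace property $\varphi(W^*(WB))=\varphi((WB)W^*)$.
\item For (g), you use the domination $WAW^*\le\|A\|_{1,\infty}T_0$ and the decomposition of $T$ into four positive parts.
\item For vanishing on finite-rank operators, you use unitary invariance and the harmonic-sum argument $c\sum_{j<n}(j+1)^{-1}=\varphi(T_0)-\varphi((1-P_n)T_0)=O(1)$.
\end{itemize}
Your route is self-contained and elementary. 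The cited route is shorter and gives a stronger statement: (b) holds for every trace. Your $\log n$ argument needs continuity to bound $\varphi((1-P_n)T_0)$. That is enough here, because the lemma only claims vanishing for continuous traces.

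One expository point: cut the middle of your second part.
\begin{itemize}
\item The remark that a rank-one projection is a sum of ``$n$-fold equivalent pieces'' is meaningless.
\item Writing $\varphi(T_0)=\varphi(T_0-P_nT_0)$ as an input is circular, since that identity is what vanishing on finite-rank operators would give.
\end{itemize}
The version in your final paragraph is the correct argument. It works together with the polarization identity, which writes every rank-one operator as a linear combination of four positive rank-one operators and hence of rank-one projections. With that identity the commutator terms are not even needed.
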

\begin{proof}
The first assertion follows from
\cite[Theorem~1.1.12(g)]{LMSZ23}.
By \cite[Theorem~1.1.12(b)]{LMSZ23}, every trace vanishes
on finite-rank operators. Continuity therefore gives
\begin{equation}
\label{eq:ow-trace-small-ideal}
 \varphi(R)=0,
 \qquad R\in(\mathcal L_{1,\infty}(H))_0.
\end{equation}

For positive $B$, put $D_Be_j=\mu(j,B)e_j$.
By \cite[Theorem~1.1.12(a)]{LMSZ23},
\begin{equation}
\label{eq:ow-trace-diagonalization}
 \varphi(B)=\varphi(D_B).
\end{equation}
Let $P_N$ project onto
$\operatorname{span}\{e_0,\ldots,e_N\}$.
Since $(j+1)\mu(j,B)\to c$,
\begin{equation}
\label{eq:ow-trace-diagonal-tail}
 \|(D_B-cT_0)(I-P_N)\|_{1,\infty}
 \leq\sup_{j>N}|(j+1)\mu(j,B)-c|
 \longrightarrow0.
\end{equation}
Thus $D_B-cT_0\in(\mathcal L_{1,\infty}(H))_0$.
Equations~\eqref{eq:ow-trace-small-ideal} and
\eqref{eq:ow-trace-diagonalization} give
$\varphi(B)=c\varphi(T_0)=c$.
\end{proof}

\begin{theorem}
\label{thm:ow-connes}
Let $K$ and $a$ be as in Theorem~\ref{thm:ow-sandwich-weyl}.
Then $K$ and $|K|$ are $\mathcal{P T}$-measurable.
Every continuous normalized trace $\varphi$ on
$\mathcal L_{1,\infty}(H)$ satisfies
\begin{align*}
 \varphi(K)
 &=\frac{1}{d}\int_{\mathbb S^{d-1}}\tau(a(s))\,ds,
 \\
 \varphi(|K|)
 &=\frac{1}{d}\int_{\mathbb S^{d-1}}\tau(|a(s)|)\,ds.
\end{align*}
In particular, for the classical pseudodifferential operators in
Corollary~\ref{cor:ow-classical-factors},
\[
 \varphi(Q\lambda(x)P)
 =\frac{1}{d}\int_{\mathbb S^{d-1}}\tau(q(s)xp(s))\,ds.
\]
\end{theorem}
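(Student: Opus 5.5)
We will reduce everything to Lemma~\ref{lem:ow-continuous-traces} and the singular-value and eigenvalue asymptotics already proved in Theorem~\ref{thm:ow-sandwich-weyl}. The plan is to decompose $K$ into selfadjoint pieces, and each selfadjoint piece into positive and negative parts. The traces of $|K|$ and of these parts are then read off from the Weyl limits, and all pieces of $(\mathcal L_{1,\infty}(H))_0$ are discarded using \eqref{eq:ow-trace-small-ideal}.

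For $|K|$: this operator is positive and lies in $\mathcal L_{1,\infty}(H)$, with $\mu(j,|K|)=\mu(j,K)$. By \eqref{eq:ow-sandwich-singular}, $(j+1)\mu(j,|K|)\to\frac1d\int_{\mathbb S^{d-1}}\tau(|a(s)|)\,ds$. Lemma~\ref{lem:ow-continuous-traces} then gives $\varphi(|K|)$ for every continuous normalized trace. Since positive normalized traces are continuous by the same lemma, this also gives $\mathcal{PT}$-measurability of $|K|$.

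For $K$: write $K=\operatorname{Re}K+\mathrm i\operatorname{Im}K$. Both parts are again of the form \eqref{eq:ow-operator-and-symbol}, after taking sums of two representations. Concretely,
\[
\operatorname{Re}K=\tfrac12\bigl(S(1-\Delta)^{-\frac d4}\lambda(x)(1-\Delta)^{-\frac d4}T+T^*(1-\Delta)^{-\frac d4}\lambda(x^*)(1-\Delta)^{-\frac d4}S^*\bigr)+\operatorname{Re}R.
\]
This is a sum of two sandwiches rather than a single one, so first I would check that the proof of Theorem~\ref{thm:ow-sandwich-weyl} works verbatim for finite sums $\sum_i S_i(1-\Delta)^{-d/4}\lambda(x_i)(1-\Delta)^{-d/4}T_i+R$, with symbol $\sum_i\operatorname{sym}(S_i)x_i\operatorname{sym}(T_i)$. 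This holds because the normal form \eqref{eq:ow-normal-form} is additive and $\sum_i S_i\lambda(x_{i,n})T_i\in\Pi$. Alternatively, one can avoid sums via the $2\times2$ matrix trick, but I expect additivity to be the cleaner route. The symbols of $\operatorname{Re}K$ and $\operatorname{Im}K$ are then $\operatorname{Re}a$ and $\operatorname{Im}a$.

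For a selfadjoint $B$ of this form with symbol $b$, the signed asymptotics \eqref{eq:ow-sandwich-signed} give $(j+1)\mu(j,B_\pm)\to\frac1d\int\tau(b(s)_\pm)\,ds$. Applying Lemma~\ref{lem:ow-continuous-traces} to the positive operators $B_\pm$ and using linearity yields
\[
\varphi(B)=\varphi(B_+)-\varphi(B_-)=\frac1d\int_{\mathbb S^{d-1}}\tau(b(s))\,ds,
\]
by \eqref{eq:ow-traced-positive-parts}. Combining the real and imaginary parts gives $\varphi(K)=\frac1d\int\tau(a(s))\,ds$, and hence $\mathcal{PT}$-measurability of $K$. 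The classical case then follows from Corollary~\ref{cor:ow-classical-factors}: there $K=S(1-\Delta)^{-d/4}\lambda(x)(1-\Delta)^{-d/4}T$ with $\operatorname{sym}S=q$ and $\operatorname{sym}T=p$.

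The main obstacle is the extension of Theorem~\ref{thm:ow-sandwich-weyl} to finite sums, or equivalently justifying that $\operatorname{Re}K$ admits a single representation. I would handle it by rerunning the approximation argument. The commutator estimates give $\operatorname{Re}K_n-(1-\Delta)^{-d/4}(\operatorname{Re}A_n)(1-\Delta)^{-d/4}\in(\mathcal L_{1,\infty})_0$, where $A_n=\sum_iS_i\lambda(x_{i,n})T_i$. One then applies Proposition~\ref{prop:ow-bounded-input} and passes to the limit with Lemma~\ref{lem:ow-signed-stability}, exactly as in the signed part of that proof.
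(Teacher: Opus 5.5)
Your argument is correct, and your treatment of $|K|$ (hence its $\mathcal{PT}$-measurability) is the same as the paper's. For $\varphi(K)$, however, you take a different route.

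\emph{The paper's route.} It never needs a signed Weyl law for an operator with an unbounded potential. It takes the Fourier-polynomial approximants $K_n$ from the proof of Theorem~\ref{thm:ow-sandwich-weyl}. It uses \eqref{eq:ow-normal-form} and the commutator inclusion to write $K_n$ as $(1-\Delta)^{-d/4}A_n(1-\Delta)^{-d/4}$ modulo $(\mathcal L_{1,\infty}(H))_0$, with $A_n\in\Pi$. It then splits $A_n$ into $\operatorname{Re}A_n,\operatorname{Im}A_n\in\Pi$ and applies Proposition~\ref{prop:ow-bounded-input} with Lemma~\ref{lem:ow-continuous-traces}. This gives $\varphi(K_n)=\frac1d\int_{\mathbb S^{d-1}}\tau(a_n(s))\,ds$. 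The passage $n\to\infty$ is then only continuity of $\varphi$ together with \eqref{eq:ow-symbol-convergence}.

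\emph{Your route.} You take that limit inside eigenvalue asymptotics for $\operatorname{Re}K$ and $\operatorname{Im}K$. This costs the more delicate Lemma~\ref{lem:ow-signed-stability}, but it yields slightly more: signed Weyl laws for the real and imaginary parts of a non-selfadjoint $K$.

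\emph{The obstacle you single out is lighter than you expect.} The derivation of \eqref{eq:ow-selfadjoint-normal-form} never uses $K=K^*$. So the signed part of that proof, run with $\operatorname{Re}K_n\to\operatorname{Re}K$ and $\operatorname{Re}a_n\to\operatorname{Re}a$, already gives $(j+1)\lambda_j^\pm(\operatorname{Re}K)\to\frac1d\int_{\mathbb S^{d-1}}\tau((\operatorname{Re}a(s))_\pm)\,ds$. The same holds with $\operatorname{Im}$ in place of $\operatorname{Re}$, and no general theory of finite sums of sandwiches is needed.

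\emph{One missing line.} The hypothesis also covers the Riesz-potential operators. By \eqref{eq:ow-riesz-bessel-difference}, such a $K$ is of Bessel form with $R$ replaced by another element of $(\mathcal L_{1,\infty}(H))_0$. Say this explicitly, and your argument then covers them as well.
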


\begin{proof}
Let $\varphi$ be a continuous normalized trace.
We first prove the formula for $\varphi(K)$.
Equation~\eqref{eq:ow-riesz-bessel-difference} and
Lemma~\ref{lem:ow-continuous-traces} reduce the Riesz-potential case
to the Bessel-potential case.
Let $K_n,A_n,a_n$ be the approximants constructed in the proof of
Theorem~\ref{thm:ow-sandwich-weyl}.
Equation~\eqref{eq:ow-normal-form}, the commutator inclusion in
Proposition~\ref{prop:ow-bounded-input}, and
\eqref{eq:prelim-small-product} give
\[
 K_n-(1-\Delta)^{-\frac d4}A_n(1-\Delta)^{-\frac d4}
 \in(\mathcal L_{1,\infty}(H))_0.
\]
Both $\operatorname{Re}A_n$ and $\operatorname{Im}A_n$ belong to $\Pi$.
Apply the eigenvalue formulas in
Proposition~\ref{prop:ow-bounded-input} and
Lemma~\ref{lem:ow-continuous-traces} to the positive and negative
parts of their selfadjoint sandwiches.
By linearity and the vanishing of $\varphi$ on
$(\mathcal L_{1,\infty}(H))_0$, we obtain
\[
 \varphi(K_n)
 =\varphi\bigl((1-\Delta)^{-\frac d4}A_n(1-\Delta)^{-\frac d4}\bigr)
 =\frac1d\int_{\mathbb S^{d-1}}\tau(a_n(s))\,ds.
\]
Since $\|K_n-K\|_{1,\infty}\to0$, continuity of $\varphi$ and
\eqref{eq:ow-symbol-convergence} give the formula for $\varphi(K)$.

Since $\mu(j,|K|)=\mu(j,K)$,
Theorem~\ref{thm:ow-sandwich-weyl} and
Lemma~\ref{lem:ow-continuous-traces} give the formula for
$\varphi(|K|)$.
Every positive normalized trace is continuous by the same lemma.
Thus both formulas hold for all positive normalized traces,
which proves the $\mathcal{P T}$-measurability of $K$ and $|K|$.
\end{proof}

\begin{corollary}
\label{cor:ow-integration-potential}
For every $x\in L\log L(\mathbb T_\theta^d)$,
\begin{equation*}
 \lim_{j\to\infty}(j+1)\mu\bigl(j,(1-\Delta)^{-\frac{d}{4}}\lambda(x)(1-\Delta)^{-\frac{d}{4}}\bigr)
 =\frac{|\mathbb S^{d-1}|}{d}\tau(|x|),
\end{equation*}
and every continuous normalized trace $\varphi$ on
$\mathcal L_{1,\infty}(H)$ satisfies
\begin{equation*}
 \begin{aligned}
 \varphi\bigl((1-\Delta)^{-\frac{d}{4}}\lambda(x)(1-\Delta)^{-\frac{d}{4}}\bigr)
 &=\frac{|\mathbb S^{d-1}|}{d}\tau(x),\\
 \varphi\bigl(|(1-\Delta)^{-\frac{d}{4}}\lambda(x)(1-\Delta)^{-\frac{d}{4}}|\bigr)
 &=\frac{|\mathbb S^{d-1}|}{d}\tau(|x|).
 \end{aligned}
\end{equation*}
For $x=x^*$,
\begin{equation*}
 \lim_{j\to\infty}(j+1)\lambda_j^\pm\bigl((1-\Delta)^{-\frac{d}{4}}\lambda(x)(1-\Delta)^{-\frac{d}{4}}\bigr)
 =\frac{|\mathbb S^{d-1}|}{d}\tau(x_\pm),
\end{equation*}
and consequently
\begin{equation}
\label{eq:ow-constructive-integration}
 \begin{aligned}
 \tau(x)=\frac{d}{|\mathbb S^{d-1}|}\Bigl[
 &\lim_{j\to\infty}(j+1)\lambda_j^+\bigl((1-\Delta)^{-\frac{d}{4}}\lambda(x)(1-\Delta)^{-\frac{d}{4}}\bigr)\\
 &-\lim_{j\to\infty}(j+1)\lambda_j^-\bigl((1-\Delta)^{-\frac{d}{4}}\lambda(x)(1-\Delta)^{-\frac{d}{4}}\bigr)\Bigr].
 \end{aligned}
\end{equation}
The corresponding logarithmic mean also converges:
\begin{equation*}
 \begin{aligned}
 \tau(x)
 &=\frac{d}{|\mathbb S^{d-1}|}
   \lim_{N\to\infty}\frac{1}{\log(N+2)}\sum_{j=0}^N
   \Bigl[\lambda_j^+\bigl((1-\Delta)^{-\frac{d}{4}}\lambda(x)(1-\Delta)^{-\frac{d}{4}}\bigr)\\
 &\hspace{11em}
         -\lambda_j^-\bigl((1-\Delta)^{-\frac{d}{4}}\lambda(x)(1-\Delta)^{-\frac{d}{4}}\bigr)\Bigr].
 \end{aligned}
\end{equation*}
All formulas in this corollary also hold with $(-\Delta)^{-\frac{d}{4}}$ in place
of each $(1-\Delta)^{-\frac{d}{4}}$.
\end{corollary}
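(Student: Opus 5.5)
The plan is to specialize Theorem~\ref{thm:ow-sandwich-weyl} with $S=T=1$, $R=0$, and then pass from eigenvalue asymptotics to traces and logarithmic means. With this choice $\operatorname{sym}(1)(s)=1$ for every $s$, so the generalized principal symbol is the constant function $a(s)=x$. Formula \eqref{eq:ow-sandwich-singular} then gives
\[
 \lim_{j\to\infty}(j+1)\mu(j,K)=\frac1d\int_{\mathbb S^{d-1}}\tau(|x|)\,ds=\frac{|\mathbb S^{d-1}|}{d}\tau(|x|),
\]
where $K=(1-\Delta)^{-d/4}\lambda(x)(1-\Delta)^{-d/4}$. Theorem~\ref{thm:ow-connes} with the same data yields the formulas for $\varphi(K)$ and $\varphi(|K|)$. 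For $x=x^*$, $K$ is selfadjoint: the form \eqref{eq:torus-sandwich-form} is Hermitian since the map preserves adjoints. Hence \eqref{eq:ow-sandwich-signed} gives the $\lambda_j^\pm$ asymptotics with $\tau(x_\pm)$. Subtracting the two limits and using $\tau(x_+)-\tau(x_-)=\tau(x)$ gives \eqref{eq:ow-constructive-integration}.

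For the logarithmic mean, write $c_\pm=\lim(j+1)\lambda_j^\pm(K)$. For $\varepsilon>0$ and $j\ge j_0$, we have $|\lambda_j^\pm-c_\pm/(j+1)|\le\varepsilon/(j+1)$. Summing from $0$ to $N$, the partial sums of $\lambda_j^\pm$ equal $c_\pm\sum_{j\le N}(j+1)^{-1}+O(1)+O(\varepsilon\log N)$. Since $\sum_{j\le N}(j+1)^{-1}=\log(N+2)+O(1)$, dividing by $\log(N+2)$ gives convergence to $c_\pm$ (a Cesàro/Abel-type argument). The signed mean then tends to $c_+-c_-=\frac{|\mathbb S^{d-1}|}{d}\tau(x)$, and multiplying by $d/|\mathbb S^{d-1}|$ gives the claim.

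For the Riesz version, Theorem~\ref{thm:ow-sandwich-weyl} already asserts the same conclusions with $(-\Delta)^{-d/4}$ and the same symbol $a$. Theorem~\ref{thm:ow-connes} covers the Riesz case as well, through \eqref{eq:ow-riesz-bessel-difference} and Lemma~\ref{lem:ow-continuous-traces}. The logarithmic-mean argument uses only the eigenvalue asymptotics, so it transfers verbatim.

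The only point requiring care is checking that $\operatorname{sym}(1)=1$ and that $S=T=1\in\Pi$. This holds because $1=\lambda(1)\in\lambda(C(\mathbb T_\theta^d))$, and $\operatorname{sym}$ is a unital $*$-homomorphism on $\lambda(y)$ with $g\equiv1$, up to the rank-one change at zero frequency. Everything else is direct substitution, with the elementary harmonic-sum estimate as the only computation.
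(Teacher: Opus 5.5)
Your proposal is correct and matches the paper's proof. Like the paper, you specialize Theorems~\ref{thm:ow-sandwich-weyl} and~\ref{thm:ow-connes}, including their Riesz-potential versions, to $S=T=1$, $R=0$ so that $a(s)=x$, and you obtain the logarithmic mean from the implication $(j+1)b_j\to c\Rightarrow \frac{1}{\log(N+2)}\sum_{j=0}^N b_j\to c$ applied to each of $\lambda_j^\pm$; your checks that $\operatorname{sym}(1)=1$ and that the sandwich is selfadjoint for $x=x^*$ spell out details the paper leaves implicit.
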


\begin{proof}
In \eqref{eq:ow-operator-and-symbol}, take $S=T=1$ and $R=0$.
The sandwich with $S=T=1$ has generalized principal symbol $a(s)=x$.
Apply Theorems~\ref{thm:ow-sandwich-weyl} and~\ref{thm:ow-connes}.
For the logarithmic-mean formula, use the elementary implication
\[
 (j+1)b_j\to c
 \quad\Longrightarrow\quad
 \frac{1}{\log(N+2)}\sum_{j=0}^N b_j\to c
\]
for each of the two eigenvalue sequences.
For the Riesz-potential formulas, use the Riesz-potential version
of Theorem~\ref{thm:ow-sandwich-weyl} and Theorem~\ref{thm:ow-connes}.
\end{proof}

Equation~\eqref{eq:ow-constructive-integration} gives $\tau(x)$ from the
positive and negative eigenvalue limits.
The corresponding result for compact Riemannian manifolds is
\cite[Theorem~1.2]{SZ23}.
For a general $x\in L\log L(\mathbb T_\theta^d)$, apply
\eqref{eq:ow-constructive-integration} to $\operatorname{Re}x$
and $\operatorname{Im}x$ separately.
\subsection{A semiclassical Weyl's law}

\begin{corollary}
\label{cor:ow-semiclassical}
Let $x=x^*\in L\log L(\mathbb T_\theta^d)$ and $h>0$. The form sum
\[
 H_h=h^d(1-\Delta)^{\frac{d}{2}}+\lambda(x)
\]
with form domain $W_2^{d/2}(\mathbb T_\theta^d)$ defines a selfadjoint operator
bounded below with compact resolvent. If $N_-(H_h)$ counts its strictly
negative eigenvalues with multiplicities, then
\begin{equation*}
 \lim_{h\downarrow0}h^dN_-(H_h)
 =\frac{|\mathbb S^{d-1}|}{d}\tau(x_-).
\end{equation*}
Moreover,
\begin{equation*}
 N_-(H_h)\leq \frac{C_d}{h^d}\|x_-\|_{L\log L(\mathbb T_\theta^d)},
\end{equation*}
with the constant from Theorem~\ref{thm:torus-endpoint}.
\end{corollary}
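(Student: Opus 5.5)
The plan is to use the KLMN theorem for the operator and the Birman--Schwinger principle for the count, feeding in Theorem~\ref{thm:torus-endpoint}, Corollary~\ref{cor:ow-integration-potential} and Lemma~\ref{lem:distribution-singular-asymptotics}.

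\emph{The form sum.} Set $A=(1-\Delta)^{-d/4}$. Write $x=x_+-x_-$. Theorem~\ref{thm:torus-endpoint}, applied to the operator norm, gives for $u=A\xi$
\[
 \tau(u^*x_\pm u)=\langle A\lambda(x_\pm)A\xi,\xi\rangle\leq C_d\|x_\pm\|_{L\log L}\|\xi\|_2^2 .
\]
To get form-boundedness with relative bound $0$, split $x=y+z$ with $y$ bounded selfadjoint and $\|z\|_{L\log L}<\varepsilon$; the truncation in Lemma~\ref{lem:ow-smooth-density} does this. This yields
\[
 |\tau(u^*xu)|\leq C_d\varepsilon h^{-d}\cdot h^d\|u\|_{W_2^{d/2}}^2+\|y\|_\infty\|u\|_2^2 .
\]
KLMN then gives a selfadjoint operator $H_h$, bounded below, with form domain $W_2^{d/2}$. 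Its resolvent is compact because the form norm is equivalent to the $W_2^{d/2}$ norm, and $W_2^{d/2}\hookrightarrow H$ is compact.

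\emph{Birman--Schwinger reduction.} Substitute $u=A\xi$ in the form. Then $\mathfrak h_h[u]=\langle(h^d+K)\xi,\xi\rangle$, where $K=A\lambda(x)A$ is the compact selfadjoint operator of Theorem~\ref{thm:torus-endpoint}. Since $A$ is a bijection from $H$ onto the form domain, the min-max principle gives
\[
 N_-(H_h)=\max\{\dim V:\mathfrak h_h<0\text{ on }V\setminus\{0\}\}=\#\{j:\lambda_j^-(K)>h^d\}=\operatorname{Tr}_H\bigl(\mathbf 1_{(h^d,\infty)}(K_-)\bigr).
\]
This identity is elementary, with no perturbation subtleties, because it is a statement about the single compact form $K$.

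\emph{Asymptotics and bound.} Corollary~\ref{cor:ow-integration-potential} gives $(j+1)\lambda_j^-(K)\to\frac{|\mathbb S^{d-1}|}{d}\tau(x_-)$, i.e.\ $(j+1)\mu(j,K_-)$ converges. Lemma~\ref{lem:distribution-singular-asymptotics} with $p=1$ and $s=h^d$ converts this into
\[
 h^d\operatorname{Tr}_H\bigl(\mathbf 1_{(h^d,\infty)}(K_-)\bigr)\longrightarrow\frac{|\mathbb S^{d-1}|}{d}\tau(x_-).
\]
For the uniform bound, restrict to the spectral subspace where $\mathfrak h_h<0$. On that subspace,
\[
 h^d\|\xi\|^2<-\langle K\xi,\xi\rangle\leq\langle A\lambda(x_-)A\xi,\xi\rangle ,
\]
so $N_-(H_h)\leq\#\{j:\lambda_j(A\lambda(x_-)A)>h^d\}$. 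By the weak-$\mathcal L_1$ definition this count is at most $h^{-d}\|A\lambda(x_-)A\|_{1,\infty}$, hence at most $C_dh^{-d}\|x_-\|_{L\log L}$.

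\emph{Main obstacle.} The main obstacle is the rigorous min-max identification for the form sum defined only through KLMN. I would handle it by working entirely with quadratic forms on $W_2^{d/2}$ and using the isometry $A:H\to W_2^{d/2}$, so that no operator domains are involved.
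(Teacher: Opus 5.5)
Your proposal is correct and follows the paper's proof essentially step for step. Both arguments build the operator from the form via Kato/KLMN, using a bounded selfadjoint approximant from Lemma~\ref{lem:ow-smooth-density} to get relative form bound zero. Both then substitute $u=(1-\Delta)^{-d/4}\xi$ and apply Glazman's lemma to obtain $N_-(H_h)=\#\{j:\lambda_j^-(K)>h^d\}$, get the limit from Corollary~\ref{cor:ow-integration-potential} together with Lemma~\ref{lem:distribution-singular-asymptotics}, and derive the uniform bound by discarding the positive $x_+$ part and counting eigenvalues of $(1-\Delta)^{-d/4}\lambda(x_-)(1-\Delta)^{-d/4}$ above $h^d$ via its weak trace class quasi-norm.
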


\begin{proof}
The identity~\eqref{eq:torus-sandwich-form}, the estimate \eqref{eq:torus-endpoint}, and the isomorphism in
\eqref{eq:prelim-sobolev} extend the potential form
$u\mapsto\tau(u^*xu)$ continuously from $\mathcal S(\mathbb T_\theta^d)$
to $W_2^{d/2}(\mathbb T_\theta^d)$.
We use $\tau(u^*xu)$ for this extension.
For every $\varepsilon>0$, Lemma~\ref{lem:ow-smooth-density} provides
bounded selfadjoint $y$ with $C_d\|x-y\|_{L\log L(\mathbb T_\theta^d)}<\varepsilon$.
The endpoint estimate then gives
\begin{equation}
\label{eq:ow-infinitesimal-form-bound}
 |\tau(u^*xu)|
 \leq\varepsilon\|(1-\Delta)^{\frac{d}{4}}u\|_2^2
       +\|y\|_\infty\|u\|_2^2.
\end{equation}

Define
\begin{equation}
\label{eq:ow-semiclassical-form}
 \mathfrak q_h[u]=h^d\|(1-\Delta)^{\frac d4}u\|_2^2+\tau(u^*xu),
 \qquad u\in W_2^{d/2}(\mathbb T_\theta^d).
\end{equation}
This form is densely defined, closed and bounded below.
Indeed, take $\varepsilon<h^d$ in
\eqref{eq:ow-infinitesimal-form-bound}.
For a sufficiently large $c$, the norm
$(\mathfrak q_h[u]+c\|u\|_2^2)^{1/2}$ is equivalent to the
$W_2^{d/2}(\mathbb T_\theta^d)$ norm; see
\cite[Chapter~VI, Theorem~1.33]{Kato95}.
The first representation theorem
\cite[Chapter~VI, Theorem~2.1]{Kato95} defines $H_h$.
Since the form is symmetric, $H_h$ is selfadjoint and bounded below
by \cite[Chapter~VI, Theorem~2.6]{Kato95}.
The compact embedding
$W_2^{d/2}(\mathbb T_\theta^d)\hookrightarrow H$ gives compact resolvent.

By \eqref{eq:prelim-sobolev}, $(1-\Delta)^{-d/4}$ is a bijection
from $H$ onto $W_2^{d/2}(\mathbb T_\theta^d)$.
Equations~\eqref{eq:torus-sandwich-form} and
\eqref{eq:ow-semiclassical-form} give
\begin{equation}
\label{eq:ow-transformed-form}
 \mathfrak q_h[(1-\Delta)^{-\frac d4}\xi]
 =h^d\|\xi\|_2^2
+\big\langle(1-\Delta)^{-\frac d4}\lambda(x)
                     (1-\Delta)^{-\frac d4}\xi,\xi\big\rangle_H,
 \qquad \xi\in H.
\end{equation}
This change of variables preserves the dimensions of subspaces
on which the forms are strictly negative.
Applying Glazman's variational principle
 (see, e.g., \cite[Lemma~7.1]{MP22}) to \eqref{eq:ow-transformed-form} gives
\begin{equation}
\label{eq:ow-bs-identity}
 N_-(H_h)
 =\#\bigl\{j:\lambda_j^-\bigl((1-\Delta)^{-\frac{d}{4}}\lambda(x)(1-\Delta)^{-\frac{d}{4}}\bigr)>h^d\bigr\}.
\end{equation}
The eigenvalue asymptotics in
Corollary~\ref{cor:ow-integration-potential},
the counting identity~\eqref{eq:ow-bs-identity}, and
Lemma~\ref{lem:distribution-singular-asymptotics}
give the asserted limit as $h\downarrow0$,
including the case $\tau(x_-)=0$.

Finally, $x=x_+-x_-$ with $x_\pm\geq0$.
Positivity of the potential forms and the min--max principle
\cite[Lemma~7.1]{MP22} yield
\[
 \begin{aligned}
 N_-(H_h)
 &\leq\#\bigl\{j:\mu\bigl(j,(1-\Delta)^{-\frac{d}{4}}\lambda(x_-)(1-\Delta)^{-\frac{d}{4}}\bigr)>h^d\bigr\}\\
 &\leq\frac{1}{h^d}\bigl\|(1-\Delta)^{-\frac{d}{4}}\lambda(x_-)(1-\Delta)^{-\frac{d}{4}}\bigr\|_{1,\infty}\\
 &\leq\frac{C_d}{h^d}\|x_-\|_{L\log L(\mathbb T_\theta^d)}.
 \end{aligned}
\]
This proves the bound for $N_-(H_h)$.
\end{proof}

\medskip

\begingroup
\renewcommand{\addcontentsline}[3]{}

\subsection*{Acknowledgements}
The authors would like to thank Fedor Sukochev and Dmitriy Zanin for their interest in this work, helpful comments, and suggestions that improved the presentation of the proof. 
The authors are also grateful to Edward McDonald and Xiao Xiong for helpful discussions.

\subsection*{AI statement}
The authors use ChatGPT for language polishing, LaTeX editing and exploratory mathematical discussions. 
GPT-5.6 Sol suggested the approach to the estimate for the bounded cutoff via a convolution argument (see equation \eqref{eq:cwikel-cutoff-profile} and the paragraph it lies in), which is inspired by the paper \cite{HKRV23}. 
The authors checked the suggested idea and the existence of \cite{HKRV23}, and take full responsibility for the mathematical content and conclusions of this work. 
\par
\endgroup

\bibliographystyle{plain}
\bibliography{references}

\end{document}